\newtheorem{theorem}{Theorem}[section]
\newtheorem{proposition}[theorem]{Proposition}
\newtheorem{lemma}[theorem]{Lemma}
\newtheorem{corollary}[theorem]{Corollary}
\theoremstyle{definition}
\newtheorem{definition}[theorem]{Definition}
\newtheorem{remark}[theorem]{Remark}
\newtheorem{problem}[theorem]{Problem}
\newtheorem{step}{Step}
\theoremstyle{theorem}
\title[Partial twists and exotic Stein fillings]{Partial twists and exotic Stein fillings}
\author[Kouichi Yasui]{Kouichi Yasui}
\thanks{The author was partially supported by JSPS KAKENHI Grant Number 25800048.}
\date{June 25, 2014}
\subjclass[2010]{Primary~57R55, Secondary~57R65, 57R17}
\keywords{4-manifold; Lefschetz fibration; Stein manifold; contact structure}
\address{Department~of~Mathematics, Graduate School~of~Science, Hiroshima~University, 1-3-1 Kagamiyama, Higashi-Hiroshima, 739-8526, Japan}
\email{kyasui@hiroshima-u.ac.jp}
\begin{document}

\begin{abstract}
We give an algorithm which produces infinitely many pairwise exotic Stein fillings of the same contact 3-manifolds, applying positive allowable Lefschetz fibrations over the disk. 
As a corollary, for a large class of Stein fillings, we realize the topological invariants (i.e.\ fundamental group, homology group, homology group of the boundary, and intersection form) of each filling as those of infinitely many pairwise exotic Stein fillings. 
Furthermore, applying the algorithm, we produce various contact 3-manifolds of support genus one each of which admits infinitely many pairwise exotic Stein fillings. 
\end{abstract}

\maketitle

\section{Introduction}\label{sec:intro}A fundamental problem in 4-dimensional topology is to find all exotic (i.e.\ homeomorphic but non-diffeomorphic) smooth structures on 4-manifolds. Here we study smooth structures of compact Stein 4-manifolds, since they have many useful properties, and they can be effectively used for surgery constructions of closed exotic symplectic 4-manifolds. Note that various contact 3-manifolds admit unique Stein fillings (i.e.\ compact Stein 4-manifolds) up to diffeomorphism. Therefore Stein fillings of the same contact structures are particularly interesting.  
 The main purpose of this paper is to introduce an algorithm which produces infinitely many pairwise exotic Stein fillings of the same contact 3-manifolds. Furthermore, we determine the support genera of boundary contact 3-manifolds. 

\subsection{Exotic Stein fillings} The first examples of exotic Stein fillings were constructed by Akhmedov-Etnyre-Mark-Smith in \cite{AEMS}. They found infinitely many contact 3-manifolds each of which admits infinitely many pairwise exotic simply connected Stein fillings. Akhmedov-Ozbagci (\cite{AkhOz1}, \cite{AkhOz2}) extended these examples regarding the fundamental groups and the boundary contact 3-manifolds. 
Akbulut and the author \cite{AY6} constructed exotic Stein fillings with small second Betti number ($b_2=2$) using a different method. However, in terms of topological invariants of 4-manifolds, known exotic Stein fillings are relatively few. See also \cite{AY2} and \cite{AY5} for exotic Stein fillings of possibly pairwise distinct contact structures. 

Let us recall that every Stein filling admits a positive allowable Lefschetz fibration over the disk with a bounded fiber surface (PALF), and that the boundary contact structure is compatible with the induced open book on the boundary of the PALF (\cite{LoP}, \cite{AO1}, \cite{Pl}). The converse statement also holds. 

In this paper, we give an algorithm which alter any given PALF $X$ with a certain condition into infinitely many PALF's satisfying the following: (1) they are pairwise exotic; (2) the induced open books on their boundary are pairwise isomorphic; (3) the fundamental group and the homology group of each PALF are isomorphic to those of $X$; (4) For some positive integer $k$, all of these PALF's can be smoothly embedded into the same manifold $X\#k\overline{\mathbb{C}{P}^2}$; (5) All of these PALF's can be embedded into the same PALF, which has one more singular fiber than these fillings, as sub-PALF's. 

Due to the conditions (1)--(3), these PALF's are infinitely many pairwise exotic Stein fillings of the same contact 3-manifold, and  they partly share topological invariants of the given $X$. Furthermore, they have interesting properties. The condition (4) says that they partly share smooth properties of the given $X$, since the blow-up operation preserves those of the original (symplectic) 4-manifold as is well-known. The condition (5) implies that they become pairwise diffeomorphic by attaching just one Stein 2-handle to each filling, though the resulting manifold still admits a Stein structure. 
We note that our algorithm may change the homeomorphism type of the boundary $\partial X$. 
This algorithm, which we introduce in Section~\ref{section:main algorithm}, yields our main result Theorem~\ref{sec:algorithm:mainthm}. See also Subsection~\ref{sec:intro:subsec:on the algorithm}.

We state corollaries of our algorithm. By a \textit{$2$-handlebody}, we mean a connected handlebody obtained from the 0-handle by attaching 1- and 2-handles. It is known that any Stein filling admits a decomposition into a 4-dimensional compact 2-handlebody. 
Our algorithm implies the theorem below, which says that, for a large class of 4-dimensional 2-handlebodies, topological invariants  of each 2-handlebody are realized as those of infinitely many pairwise exotic Stein fillings. 
\begin{theorem}\label{intro:thm:most2-handlebody}Let $X$ be a compact oriented $4$-dimensional $2$-handlebody, and let $Z$ be either $X\#S^2\times S^2$ or $X\#\mathbb{C}{P}^2\#\overline{\mathbb{C}{P}^2}$. Then there exist infinitely many pairwise homeomorphic but non-diffeomorphic Stein fillings of the same contact $3$-manifold such that the fundamental group, the homology group, the homology group of the boundary, and the intersection form of each filling are isomorphic to those of $Z$. Furthermore, for some positive integer $k$, all of these fillings can be smoothly embedded into the same manifold $Z\#k\overline{\mathbb{C}{P}^2}$. 
\end{theorem}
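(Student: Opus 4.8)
The plan is to deduce the theorem from the main algorithm (Theorem~\ref{sec:algorithm:mainthm}) by exhibiting, for each choice of $Z$, a PALF $W$ whose total space is diffeomorphic to $Z$ and which satisfies the hypothesis of that algorithm. Thus the first and decisive step is a construction: given an arbitrary compact oriented $4$-dimensional $2$-handlebody $X$, build a positive allowable Lefschetz fibration over the disk whose total space is diffeomorphic to $X\#S^2\times S^2$ (respectively $X\#\mathbb{C}P^2\#\overline{\mathbb{C}P^2}$) and whose vanishing-cycle configuration meets the requirement of Theorem~\ref{sec:algorithm:mainthm}. The two stabilizing summands are the two $S^2$-bundles over $S^2$, and both carry standard positive (fibered) Lefschetz structures; the point of allowing either is that the summand both corrects the attaching framings of the $2$-handles of $X$---converting an a priori achiral handle picture into an honest positive allowable fibration---and supplies the distinguished configuration on which the algorithm then operates. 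The two cases are treated in parallel.

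Granting such a $W$, the remainder is bookkeeping. Applying Theorem~\ref{sec:algorithm:mainthm} to $W$ yields infinitely many PALFs $W_1,W_2,\dots$ that are (1) pairwise exotic, (2) have pairwise isomorphic induced boundary open books, (3) satisfy $\pi_1(W_n)\cong\pi_1(W)$ and $H_\ast(W_n)\cong H_\ast(W)$, and (4) all embed smoothly into $W\#k\overline{\mathbb{C}P^2}$ for a single $k$. Since every PALF is a Stein filling and the induced open book determines the boundary contact structure up to isotopy, condition (2) exhibits the $W_n$ as Stein fillings of one fixed contact $3$-manifold, while condition (1) makes them \emph{homeomorphic but non-diffeomorphic}. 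As $W$ is diffeomorphic to $Z$, condition (3) gives $\pi_1(W_n)\cong\pi_1(Z)$ and $H_\ast(W_n)\cong H_\ast(Z)$; the isomorphic boundary open books give $\partial W_n\cong\partial W=\partial Z$, so the boundary homologies agree; and since the $W_n$ are mutually homeomorphic their common intersection form agrees with that of $W$, hence of $Z$, because the substitution underlying the algorithm preserves $H_2$ with its pairing. Finally condition (4), together with $W\cong Z$, furnishes the embeddings into $Z\#k\overline{\mathbb{C}P^2}$.

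The main obstacle is precisely the first step. An arbitrary $2$-handlebody $X$ need not be Stein, and its $2$-handles are attached with arbitrary framings, so one cannot directly read off a positive allowable Lefschetz fibration; the role of the $S^2\times S^2$ or $\mathbb{C}P^2\#\overline{\mathbb{C}P^2}$ summand is to absorb the resulting negative nodes and to provide enough positive Dehn twists to realize every framing. I expect this to require a careful handle-by-handle argument: present the $0$- and $1$-handles of $X$ as $F\times D^2$ for a bounded fiber surface $F$, push the $2$-handle attaching circles onto a page, and use the extra fibered summand to turn each attaching framing into a $(-1)$-framing relative to the fiber while keeping the required configuration intact. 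The delicate point is to verify that the resulting total space is genuinely \emph{diffeomorphic} to $Z$, and not merely homeomorphic, since the framing correction must leave the smooth type unchanged; this is what makes the final embedding into $Z\#k\overline{\mathbb{C}P^2}$ legitimate rather than into a mere homeomorph.
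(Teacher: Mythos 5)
Your opening step---producing a PALF whose total space is \emph{diffeomorphic} to $Z$---is not merely delicate, it is impossible, and this sinks the approach. Every PALF carries a Stein structure, and the adjunction inequality recalled at the end of Section~\ref{sec:PALF} holds for PALF's in \emph{all} genera, including genus zero. The manifold $X\#S^2\times S^2$ contains a smoothly embedded, homologically essential sphere of self-intersection $0$, and $X\#\mathbb{C}{P}^2\#\overline{\mathbb{C}{P}^2}$ one of self-intersection $+1$; either violates $\left|\langle c_1,[\Sigma]\rangle\right|+[\Sigma]\cdot[\Sigma]\leq -2$. Hence no PALF (indeed no Stein filling) is diffeomorphic to $Z$, nor to $Z$ minus a ball; the ``standard fibered structures'' on the $S^2$-bundles over $S^2$ are closed Lefschetz fibrations over $S^2$, not PALF's, and cannot be converted into ones. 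This is precisely why Theorem~\ref{intro:thm:most2-handlebody} is phrased as realizing the \emph{invariants} of $Z$ rather than $Z$ itself. Moreover, even granting such a $W$, your bookkeeping has two further errors: Theorem~\ref{sec:algorithm:mainthm} preserves neither the intersection form nor the boundary. The $R$-modifications at the heart of the algorithm decrease the framings of the $2$-handles (Lemma~\ref{sec:R:lem:diffeo:modification}), so both the intersection form and the boundary $3$-manifold (hence its homology) change; the introduction states explicitly that the algorithm may change the homeomorphism type of $\partial X$. In particular your claims that $\partial W_n\cong\partial W$ and that ``the substitution underlying the algorithm preserves $H_2$ with its pairing'' are false: the fillings produced have the common contact boundary of $W^{(m)}$, not of $W$.

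The paper's proof is structured to avoid exactly these traps. First, $X$ is converted into a \emph{different} smooth manifold $X'$ that is Stein: put the attaching link of the $2$-handles in Legendrian position, apply the $W^+$-modifications of \cite{AY5} and add zig-zags until every framing is Thurston--Bennequin $-1$. By Propositions 4.2 and 4.5 of \cite{AY5} this preserves the four listed invariants (including the homology of the boundary) and gives an embedding $X'\hookrightarrow X$, even though $X'$ is in general not diffeomorphic to $X$. Second, the $S^2\times S^2$ or $\mathbb{C}{P}^2\#\overline{\mathbb{C}{P}^2}$ summand is supplied not by a Lefschetz structure on that summand but by a Stein nucleus $N_i^{(m)}$: choosing $m_0\equiv m_1\equiv 1\pmod 2$, Proposition~\ref{subsec:nuclei:prop:nuclei:top} shows $N_i^{(m)}$ is simply connected with homology-sphere boundary and unimodular indefinite rank-two intersection form, even or odd according to the parity of $i$, so its invariants are those of punctured $S^2\times S^2$ or $\mathbb{C}{P}^2\#\overline{\mathbb{C}{P}^2}$. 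The desired fillings are the boundary sums $X'\natural N_i^{(m)}$ (with $i$ ranging over one parity), whose exoticness, common contact boundary, and embeddings into a common blow-up come from Corollary~\ref{sec:algorithm:cor:boundary sum} applied with $Y=X'$, together with an embedding of $N$ into $\mathbb{C}{P}^2\#3\overline{\mathbb{C}{P}^2}$; the homology-sphere boundary of the nuclei is what keeps the boundary homology equal to that of $\partial Z$. In short, the theorem is proved by assembling a Stein manifold with the invariants of $Z$ from two pieces, not by making $Z$ itself Stein, which cannot be done.
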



Theorem~\ref{intro:thm:most2-handlebody} immediately gives the corollary below, which was earlier proved by Akhmedov-Ozbagci~\cite{AkhOz2}  using a different method (They furthermore proved that the boundary contact 3-manifolds can be chosen as Seifert fibered singularity links.).

\begin{corollary}[Akhmedov-Ozbagci~\cite{AkhOz2}]For any finitely presented group $G$, there exist infinitely many pairwise homeomorphic but non-diffeomorphic Stein fillings of the same contact $3$-manifold such that the fundamental group of each filling is isomorphic to $G$.
\end{corollary}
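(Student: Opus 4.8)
The plan is to deduce this statement directly from Theorem~\ref{intro:thm:most2-handlebody}, so the only real content is the standard fact that every finitely presented group is realized as the fundamental group of a compact oriented $4$-dimensional $2$-handlebody. First I would fix a finite presentation $G = \langle g_1, \dots, g_n \mid r_1, \dots, r_m \rangle$ and build $X$ handle by handle: start with a single $0$-handle, attach $n$ $1$-handles (one for each generator $g_i$), so that the resulting handlebody has free fundamental group on $g_1, \dots, g_n$; then attach $m$ $2$-handles along framed embedded curves in the boundary representing the words $r_1, \dots, r_m$. By the standard handle-attachment effect on $\pi_1$ (a $1$-handle adds a free generator, a $2$-handle kills the homotopy class of its attaching circle), the resulting compact oriented $4$-dimensional $2$-handlebody $X$ satisfies $\pi_1(X) \cong G$.

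Next I would apply Theorem~\ref{intro:thm:most2-handlebody} to this $X$, taking $Z$ to be either $X \# S^2 \times S^2$ or $X \# \mathbb{C}P^2 \# \overline{\mathbb{C}P^2}$. The theorem then produces infinitely many pairwise homeomorphic but non-diffeomorphic Stein fillings of a single contact $3$-manifold, each with fundamental group isomorphic to $\pi_1(Z)$. It remains only to identify $\pi_1(Z)$ with $G$. Since $S^2 \times S^2$, $\mathbb{C}P^2$, and $\overline{\mathbb{C}P^2}$ are all simply connected, the van Kampen theorem gives $\pi_1(Z) \cong \pi_1(X) * 1 \cong \pi_1(X) \cong G$ in either case, which completes the argument.

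I expect no genuine obstacle here; the statement is a formal consequence of Theorem~\ref{intro:thm:most2-handlebody} together with a textbook handlebody construction. The only point requiring any care is the verification that the handle attachments realize exactly the presentation $G$ rather than some quotient or extension, but this is routine Kirby-calculus bookkeeping. I would simply remark that the attaching circles of the $2$-handles can be chosen to represent the relator words $r_j$ in $\pi_1$ of the $1$-handlebody, so that the presentation read off from the handle decomposition is precisely $\langle g_1, \dots, g_n \mid r_1, \dots, r_m \rangle$.
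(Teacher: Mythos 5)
Your proposal is correct and is essentially the paper's own argument: the paper derives this corollary as an immediate consequence of Theorem~\ref{intro:thm:most2-handlebody}, with the (unstated but standard) ingredients being exactly your two observations, namely that any finitely presented $G$ arises as $\pi_1$ of a compact oriented $4$-dimensional $2$-handlebody built from a presentation, and that connect-summing with the simply connected $S^2\times S^2$ or $\mathbb{C}P^2\#\overline{\mathbb{C}P^2}$ leaves $\pi_1$ unchanged.
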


Theorem~\ref{intro:thm:most2-handlebody} also gives the corollary below. Namely, for a large class of bilinear forms, we can realize a bilinear form as the intersection form of infinitely many pairwise exotic simply connected Stein fillings. 
\begin{corollary}Let $Q$ and $I$ be any integral symmetric bilinear forms over free modules, and assume that $I$ is indefinite, unimodular, and of rank $2$. Then there exist infinitely many pairwise homeomorphic but non-diffeomorphic simply connected Stein fillings of the same contact $3$-manifold such that the intersection form of each filling is isomorphic to the direct sum $Q\oplus I$.
\end{corollary}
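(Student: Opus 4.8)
The plan is to derive this corollary from Theorem~\ref{intro:thm:most2-handlebody} by exhibiting a compact oriented $4$-dimensional $2$-handlebody $X$ and a choice of $Z \in \{X \# S^2 \times S^2,\ X \# \mathbb{C}{P}^2 \# \overline{\mathbb{C}{P}^2}\}$ whose algebraic topology matches the prescribed data $Q \oplus I$ exactly. The point is that the theorem produces exotic Stein fillings carrying the full invariant package (fundamental group, homology, boundary homology, intersection form) of the auxiliary manifold $Z$, so it suffices to realize the desired form as the intersection form of such a $Z$ while simultaneously arranging simple connectivity. First I would recall that the two connected summands available, $S^2 \times S^2$ and $\mathbb{C}{P}^2 \# \overline{\mathbb{C}{P}^2}$, have intersection forms exactly the two indefinite unimodular forms of rank $2$, namely the hyperbolic form $H = \left(\begin{smallmatrix} 0 & 1 \\ 1 & 0 \end{smallmatrix}\right)$ and the odd diagonal form $\langle 1 \rangle \oplus \langle -1 \rangle$. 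By the classification of indefinite symmetric unimodular bilinear forms over $\mathbb{Z}$, any such form of rank $2$ is isomorphic to precisely one of these two, so the hypothesis on $I$ guarantees that $I$ is realized by the intersection form of one of the two allowed summands.

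The remaining task is to realize the complementary form $Q$ as the intersection form of a simply connected $2$-handlebody $X$. I would build $X$ by hand from a handle diagram: start with a single $0$-handle and attach one $2$-handle for each basis element of the free module underlying $Q$, with framings and linking numbers prescribed by the diagonal and off-diagonal entries of the Gram matrix of $Q$ in some chosen basis. Since $Q$ is symmetric and integral, every entry is an integer and such a framed link in $S^3$ exists; attaching no $1$-handles keeps $X$ simply connected, and the resulting handlebody has $H_2(X) \cong \mathbb{Z}^{\operatorname{rank} Q}$ with intersection form exactly $Q$. Setting $Z = X \# S^2 \times S^2$ or $Z = X \# \mathbb{C}{P}^2 \# \overline{\mathbb{C}{P}^2}$ according to which summand realizes $I$, the intersection form of $Z$ is $Q \oplus I$ by additivity of intersection forms under connected sum, and $Z$ remains simply connected since connected sum preserves the trivial fundamental group.

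Finally I would apply Theorem~\ref{intro:thm:most2-handlebody} to this $X$ and this choice of $Z$. The theorem yields infinitely many pairwise homeomorphic but non-diffeomorphic Stein fillings of a single contact $3$-manifold whose intersection form is isomorphic to that of $Z$, which is $Q \oplus I$ by construction. Simple connectivity of the fillings follows because the theorem makes the fundamental group of each filling isomorphic to that of $Z$, which is trivial. This completes the derivation.

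I expect the only genuine content to be the realization step, and within it the main (though still routine) obstacle is confirming that the classification of rank-$2$ indefinite unimodular forms leaves exactly the two cases covered by the two permitted connected summands; everything else is a direct invocation of Theorem~\ref{intro:thm:most2-handlebody} together with the additivity of the standard invariants under connected sum.
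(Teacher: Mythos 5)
Your proposal is correct and is essentially the argument the paper intends: the corollary is stated as an immediate consequence of Theorem~\ref{intro:thm:most2-handlebody}, obtained exactly as you do by realizing $Q$ as the intersection form of a simply connected $2$-handlebody $X$ (a $0$-handle with $2$-handles attached along a framed link whose linking matrix is a Gram matrix of $Q$) and using the classification of rank-$2$ indefinite unimodular forms to match $I$ with the intersection form of $S^2\times S^2$ or $\mathbb{C}{P}^2\#\overline{\mathbb{C}{P}^2}$. Your verification of simple connectivity and of additivity of the intersection form under connected sum fills in precisely the routine details the paper leaves implicit.
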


\subsection{Support genera and infinitely many Stein fillings}We next consider support genera which are important invariants of contact 3-manifolds. The support genus of a contact 3-manifold is the minimal page genus of its compatible open book (\cite{EtOz2}). Recently, much attention has been paid to how to characterize contact 3-manifolds which admit finitely many versus infinitely many Stein fillings up to diffeomorphism, and support genera seem to be strongly related to this problem. For example, support genera restrict the homeomorphism types of Stein fillings (e.g.\ \cite{Et3}). Moreover, in the case of support genus zero, various Stein fillable contact 3-manifolds admit only finitely many Stein fillings (e.g.\ \cite{Sch}, \cite{PlV}, \cite{Sta}, \cite{KaL}, \cite{Ka}). In the case of support genus one, the finiteness (uniqueness) also holds for the Stein fillable contact structure on $T^3$ (\cite{We}). 

It is thus natural to ask support genera of contact 3-manifolds admitting infinitely many (not necessarily exotic) Stein fillings. However, the support genera of no such contact 3-manifolds have been determined, though there are many examples of contact 3-manifolds with infinitely many Stein fillings (\cite{Sm}, \cite{OS2}, \cite{AEMS}, \cite{AkhOz1}, \cite{BaV1}, \cite{AY6}, \cite{BaV2}, \cite{AkhOz2}, \cite{DKP}). 
We note that many of such examples naturally admit open books with page genera greater than one, and that there is no known method to show that the support genus of a contact 3-manifold is greater than one. 

In this paper, we produce various contact 3-manifolds of support genus one each of which admits infinitely many pairwise exotic Stein fillings. Indeed, in the case where a given PALF is of genus one, the aforementioned algorithm can produce infinitely many pairwise exotic PALF's of genus one whose boundary contact structure is of support genus one (see Section~\ref{section:main algorithm}). Therefore we obtain vast such examples. This result contrasts sharply with the situation for genus-one \textit{closed}  Lefschetz fibrations (i.e.\ those with the closed total spaces) over $S^2$, where pairwise exotic 4-manifolds do not exist (cf.\ \cite{GS}).    

Among these examples, we here state examples with small second Betti numbers. We hope these small concrete examples of exotic PALF's, which we call Stein nuclei, become useful building blocks for constructing various exotic 4-manifolds.

\begin{theorem}\label{sec:intro:support} There exist infinitely many pairwise non-homeomorphic contact $3$-manifolds of support genus one each of which admits infinitely many pairwise homeomorphic but non-diffeomorphic simply connected Stein fillings with $b_2=2$. Furthermore, each of these $3$-manifolds is a hyperbolic $($hence irreducible$)$ homology $3$-sphere. 
\end{theorem}

We note that some of these fillings are diffeomorphic to the exotic Stein handlebodies obtained in \cite{AY5}. Modifying the algorithm, we also obtain such contact 3-manifolds admitting \textit{non-homeomorphic} Stein fillings with smaller $b_2$. 

\begin{theorem}\label{sec:intro:thm:non-homeo_simple} There exist infinitely many pairwise non-homeomorphic contact $3$-manifolds of support genus one each of which admits infinitely many pairwise non-homeomorphic Stein fillings with $b_1=0$ and $b_2=1$. Furthermore, each of these $3$-manifolds is irreducible and toroidal. 
\end{theorem}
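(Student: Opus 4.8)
The plan is to run the genus-one form of the algorithm of Section~\ref{section:main algorithm} underlying Theorem~\ref{sec:algorithm:mainthm}, but with the homeomorphism-preserving bookkeeping relaxed, so that we may trade the value $b_2=2$ of Theorem~\ref{sec:intro:support} for $b_2=1$. Concretely, I would begin from a positive allowable Lefschetz fibration over the disk whose page is the once-punctured torus $\Sigma_{1,1}$ and whose monodromy is a product of right-handed Dehn twists along three non-separating curves whose homology classes span a rank-two sublattice of $H_1(\Sigma_{1,1})\cong\mathbb{Z}^2$; such a total space is built from a single $0$-handle, two $1$-handles, and three $2$-handles, so that $b_1=0$ and $b_2=1$ automatically. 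The essential design choice is to take the total monodromy to be \emph{reducible} (parabolic on $H_1$, fixing a homology class), which simultaneously produces an incompressible torus in the induced open book and forces $b_1$ of the boundary $3$-manifold $Y$ to be positive. The modified partial twist, applied with an integer parameter $n$, then replaces the given positive factorization by a one-parameter family of positive factorizations of the \emph{same} mapping class; since the boundary open book is determined by the product alone, all of the resulting fillings $W_n$ induce the identical open book, hence the same contact structure, on the fixed $Y$.

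Given the family, I would verify the stated invariants in two stages. A handle calculation shows each $W_n$ has $b_1=0$ and $b_2=1$, and the twist is arranged to change only the framing and homology data attached to the vanishing cycles, so that the self-intersection of the generator of $H_2(W_n)$ runs through pairwise distinct negative values $-d_n$; because $b_1(Y)>0$ the boundary can absorb these discrepancies, and because the intersection form $\langle -d_n\rangle$ is a homeomorphism invariant, distinct $d_n$ immediately give pairwise non-homeomorphic fillings, with no need for any gauge-theoretic input. (This is exactly why the boundary is forced to be toroidal with $b_1>0$ here, in contrast to the homology-sphere boundaries of Theorem~\ref{sec:intro:support}, where a fixed intersection form obliges one to separate the fillings by smooth invariants.) For the support genus, the genus-one page gives the upper bound $\le 1$, while the lower bound is by contradiction: a planar (support genus $0$) contact structure admits only finitely many Stein fillings by the finiteness results for planar fillings (\cite{Sch}, \cite{PlV}, \cite{Sta}, \cite{KaL}, \cite{Ka}), which is incompatible with the infinite family $\{W_n\}$; hence the support genus is exactly $1$.

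It remains to analyze the boundaries themselves, and this is where I expect the main obstacle. From reducibility I would extract the mapping torus of the preserved curve as an incompressible torus, giving toroidality, and I would establish irreducibility by realizing $Y$ as a union of irreducible Seifert fibered pieces glued along incompressible tori, so that $Y$ is an aspherical graph manifold containing no essential sphere; ruling out hidden compressions and reducible summands is what forces the delicate choice of base monodromy. Finally, to obtain \emph{infinitely many pairwise non-homeomorphic} such contact $3$-manifolds I would vary the base fibration over a family whose boundaries are separated by a robust $3$-manifold invariant, for instance the torsion order of $H_1(Y)$ or the Seifert invariants of the JSJ pieces. The hard part will be the simultaneous control of three competing demands on one small packet of monodromy data: keeping the induced open book (and hence the contact structure) rigidly fixed across $\{W_n\}$ while genuinely moving the intersection form; guaranteeing the fixed boundary is irreducible and toroidal rather than atoroidal; and certifying that the base family yields pairwise non-homeomorphic $Y$. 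Verifying these three constraints against one another, rather than any single computation, is likely to be the most delicate step.
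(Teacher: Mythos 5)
Your proposal fails at its central step, for a homological reason that no choice of monodromy can evade. You want fillings $W_n$ with $b_1(W_n)=0$ and $b_2(W_n)=1$, all bounding the \emph{same} $Y$ with $b_1(Y)>0$, distinguished by pairwise distinct nondegenerate intersection forms $\langle -d_n\rangle$. This configuration cannot exist: in the rational long exact sequence of the pair $(W_n,Y)$, Poincar\'e--Lefschetz duality identifies $\dim\ker\bigl(H_1(Y;\mathbb{Q})\to H_1(W_n;\mathbb{Q})\bigr)$ with $b_2(W_n)-\operatorname{rank}Q_{W_n}$; since $b_1(W_n)=0$ the kernel is all of $H_1(Y;\mathbb{Q})$, so $b_1(Y)=1-\operatorname{rank}Q_{W_n}$, and $b_1(Y)>0$ forces $Q_{W_n}$ to be identically zero --- never $\langle-d_n\rangle$ with $d_n>0$. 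Your remark that ``because $b_1(Y)>0$ the boundary can absorb these discrepancies'' is exactly backwards: positive $b_1(Y)$ is what annihilates the form. (If instead the forms were nondegenerate, then $b_1(Y)=0$ and $|H_1(Y;\mathbb{Z})|=d_n\cdot|\mathrm{Tor}\,H_1(W_n)|^2$, so for fixed $Y$ only finitely many $d_n$ occur, and varying them forces the torsion of $H_1(W_n)$ to vary --- which is no longer an intersection-form argument but a fundamental-group one.) That last observation is precisely the paper's mechanism: in Section~\ref{sec:ex} the PALF $L$ has monodromy $(\gamma_1,\beta,\gamma_{-1},\gamma_{-1},\alpha_2)$ with $\alpha_1$ \emph{deliberately omitted} from the vanishing cycles, so the partial twists $t_{\alpha_1}^i$ change $\pi_1$, giving fillings $L_i^{(l)}$ with $\pi_1(L_i^{(l)})\cong\mathbb{Z}/i\mathbb{Z}$, degenerate intersection form, and a common contact boundary (Proposition~\ref{sec:ex:prop:non-homeo}); torsion in $H_1$, not the form, is the only invariant available in this homological regime.

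Several secondary steps are also unsupported. First, your support-genus lower bound appeals to ``planar contact $3$-manifolds admit only finitely many Stein fillings,'' but the cited results (\cite{Sch}, \cite{PlV}, \cite{Sta}, \cite{KaL}, \cite{Ka}) establish finiteness only for particular families; the paper instead uses Etnyre's theorem \cite{Et3}, which forbids a planar contact structure from admitting a Stein filling with degenerate or indefinite intersection form, and claim $(4)$ of Proposition~\ref{sec:ex:prop:non-homeo} supplies exactly such fillings. Second, your page $\Sigma_{1,1}$ cannot carry the partial-twist relation: Lemma~\ref{lem: monodromy of T} rests on the star relation on the three-holed torus $\mathbb{S}$, and a once-punctured torus has no room for three disjoint, pairwise non-isotopic non-separating curves $\alpha_1,\alpha_2,\alpha_3$, so your ``modified partial twist'' producing infinitely many positive factorizations of a fixed mapping class is never constructed. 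Third, distinguishing the boundary $3$-manifolds by the torsion order of $H_1(Y)$ cannot work in this setting, since the relevant boundaries are $0$-surgeries on knots and all have $H_1(Y)\cong\mathbb{Z}$; the paper separates them by the Alexander polynomial (with leading coefficient $(l_1+2)(l_2+1)$) via Turaev's maximal abelian torsion \cite{Tra}, and obtains irreducibility and toroidality from the Brittenham--Wu classification of exceptional surgeries on $2$-bridge knots \cite{BW}, rather than from a JSJ/graph-manifold construction.
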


We give more non-homeomorphic examples in the sequel \cite{Y7}. Combining the above examples, we give more exotic examples. The theorem below says that there exists a contact 3-manifold of support genus one, which has infinitely many pairwise non-homeomorphic Stein fillings and admits infinitely many pairwise exotic Stein fillings in each of these homeomorphism types. We remark that similar examples were constructed in \cite{AkhOz1}, though their support genera have not been determined. 

\begin{theorem}\label{sec:intro:thm:non-homeo}There exist infinitely many pairwise non-homeomorphic contact $3$-manifolds of support genus one each of which admits infinitely many Stein fillings $Z_{i,j}$'s $(i,j\in \mathbb{N})$ satisfying the following: for each fixed  $j\in \mathbb{N}$, infinitely many Stein fillings $Z_{i,j}$'s $(i\in \mathbb{N})$ are pairwise homeomorphic but non-diffeomorphic, and for each fixed  $i\in \mathbb{N}$, infinitely many Stein fillings $Z_{i,j}$'s $(j\in \mathbb{N})$ are pairwise non-homeomorphic. 
\end{theorem}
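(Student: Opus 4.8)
The plan is to combine the two mechanisms already established in Theorems~\ref{sec:intro:support} and~\ref{sec:intro:thm:non-homeo_simple}, using the non-homeomorphic family as \emph{input} to the exotic-producing algorithm. The $i$-direction (homeomorphic but non-diffeomorphic fillings) will be supplied by a single run of the main algorithm of Theorem~\ref{sec:algorithm:mainthm}, whose output automatically shares a boundary open book together with the fundamental group and homology of the input, by properties~(1)--(3). The $j$-direction (non-homeomorphic fillings of the \emph{same} contact manifold) will be supplied by varying the input PALF through a family whose members have the \emph{same} boundary open book but are distinguished by a homeomorphism invariant (the fundamental group, following Theorem~\ref{sec:intro:thm:non-homeo_simple}, or the second Betti number). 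Because the algorithm preserves precisely these invariants while fixing the boundary, the two directions are mutually compatible.

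Concretely, for each parameter $n$ I would first produce a sequence of genus-one PALFs $X_{n,j}$ $(j\in\mathbb N)$ with three properties: all the $X_{n,j}$ induce the same open book on their common boundary, so that they are Stein fillings of one contact $3$-manifold $(Y_n,\xi_n)$; they are pairwise non-homeomorphic, arranged so that a topological invariant (say $\pi_1$, as in Theorem~\ref{sec:intro:thm:non-homeo_simple}) varies with $j$; and each $X_{n,j}$ meets the hypothesis required to run the algorithm and is large enough in $b_2$ that the subsequent construction can actually separate smooth structures. This last point forces the input to incorporate the genus-one exotic building block underlying Theorem~\ref{sec:intro:support} in addition to the $\pi_1$-varying gadget of Theorem~\ref{sec:intro:thm:non-homeo_simple}, all assembled as a single genus-one PALF with rigidly fixed boundary monodromy.

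Next, for each fixed $(n,j)$ I would apply the main algorithm to $X_{n,j}$, obtaining infinitely many genus-one PALFs $Z_{n,i,j}$ $(i\in\mathbb N)$. By properties~(1)--(3) these are pairwise exotic, have $\pi_1$ and $H_\ast$ isomorphic to those of $X_{n,j}$, and induce on their boundary the same open book as $X_{n,j}$, hence the same $(Y_n,\xi_n)$ independently of $i$ and $j$. Writing $Z_{i,j}:=Z_{n,i,j}$ for fixed $n$, the two required conclusions follow at once: for fixed $j$ the family $\{Z_{i,j}\}_{i}$ is homeomorphic but non-diffeomorphic by~(1)--(3); for fixed $i$ the family $\{Z_{i,j}\}_{j}$ inherits the varying invariant of $X_{n,j}$ and is therefore pairwise non-homeomorphic. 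Since every $X_{n,j}$, and hence every $Z_{n,i,j}$, is a genus-one PALF, the open book on $Y_n$ has page genus one, giving support genus at most one; non-planarity, and thus support genus exactly one, is established exactly as for the examples of Theorems~\ref{sec:intro:support} and~\ref{sec:intro:thm:non-homeo_simple}.

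Finally, to get infinitely many pairwise non-homeomorphic contact $3$-manifolds I would let $n$ range over a family of base configurations chosen so that the boundaries $Y_n$ are pairwise non-homeomorphic, distinguished for instance by their fundamental groups or their geometric decompositions. The hard part will be the second step: realizing genus-one PALFs that simultaneously hold the boundary open book fixed as $j$ varies, vary a homeomorphism invariant to force non-homeomorphic fillings, and still satisfy the algorithm's hypothesis with enough room for the exotic $i$-direction. Reconciling the boundary-preserving monodromy modifications with that hypothesis, and then verifying that the Seiberg--Witten (or Heegaard Floer) type invariants continue to separate the smooth structures once the $\pi_1$-varying and exotic gadgets coexist, is where the real work lies.
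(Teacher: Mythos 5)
Your high-level strategy coincides with the paper's: fuse the exotic gadget behind Theorem~\ref{sec:intro:support} and the $\pi_1$-varying gadget behind Theorem~\ref{sec:intro:thm:non-homeo_simple} into a single genus-one PALF, run the main algorithm for the $i$-direction, and separate the $j$-direction by the fundamental group. But the step you claim "follows at once" hides a genuine gap. You assert that the algorithm's outputs "induce on their boundary the same open book as $X_{n,j}$," so that the contact manifold is independent of both $i$ and $j$. That is false: Proposition~\ref{sec:algorithm:prop:PALF}(4) only says the outputs' open books agree with the open book of the \emph{modified} PALF $X^{(m)}$ (hence with each other), and the $R$-modifications of Step~\ref{step:modified PALF} change the page and, as the paper states explicitly, may change the homeomorphism type of the boundary. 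Consequently, even if all inputs $X_{n,j}$ share one boundary open book, running the algorithm separately on each of them yields, a priori, fillings of contact manifolds that depend on $j$. What actually makes the two directions compatible is structure your proposal never supplies: the $j$-variation must itself be a monodromy substitution fixing the total monodromy --- in the paper, a partial twist along a second copy $\mathbb{S}_a$ of $\mathbb{S}$ inside the genus-one surface $\mathbb{E}$ with six boundary components, replacing $(\rho_{1},\beta,\rho_{-1})$ by $(\rho_{1}^{(j)}, b^{(j)}, \rho_{-1}^{(j)})$, which has the same product by Lemma~\ref{lem: monodromy of T} --- and the $R$-modifications must be applied only to curves independent of $j$, the $j$-twisted curves receiving $m$-value $0$, which Step~\ref{step:condition:exotic} permits because those curves are disjoint from $\tau_1$ and $\tau_3$. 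Only then are the modified fiber and total monodromy literally identical for all $(i,j)$, pinning down one contact manifold $(\partial P^{(p)},\theta^{(p)})$, while $\pi_1(P^{(p)}_{i,j})\cong \mathbb{Z}/j\mathbb{Z}$ is read off from the sub-PALF $L_j^{(0)}$. You defer exactly this construction as "where the real work lies," so the proposal stops short of a proof precisely at its crux, and the justification you do give for boundary-constancy is incorrect.

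A second, smaller divergence concerns the final step of producing infinitely many pairwise non-homeomorphic contact $3$-manifolds. You propose varying the base configuration and distinguishing the boundaries by their fundamental groups or geometric decompositions; this is the route the paper explicitly declines (it only remarks that varying $p$ is "likely" to work, without proof), because no invariant computation for those boundaries is available. Instead the paper takes boundary connected sums of the $P^{(p)}_{i,j}$ with infinitely many genus-zero PALFs and invokes Corollary~\ref{sec:algorithm:cor:boundary sum}, which simultaneously yields infinitely many non-homeomorphic boundaries, preserves the exotic and non-homeomorphic behavior of the fillings, and keeps the support genus equal to one. If you follow your plan, this step also needs an actual argument rather than a choice of invariant.
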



\subsection{On the algorithm}\label{sec:intro:subsec:on the algorithm} Here we briefly explain our algorithm. We require that a given PALF $X$ has a certain subsequence of vanishing cycles. We alter $X$ by modifying the fiber surface and the sequence of vanishing cycles. These simple modifications, which we call $R$-modifications, are related to stabilizations of Legendrian knots via open books. We then obtain infinitely many pairwise exotic PALF's by applying certain monodromy substitutions to the altered PALF along the subsequence. We distinguish smooth structures by using the adjunction inequality, the relative genus function in \cite{Y5}, and handlebody structures of PALF's. 

In order to obtain the algorithm, we realize logarithmic transformations as monodromy substitutions, using the Stein fillable open book on $T^3$ obtained by Van Horn-Morris~\cite{VHM}.  During the preparation of this paper, it turned out that our substitutions are special cases of partial twists defined by Auroux (\cite{Auroux1}, \cite{Auroux2}), though our substitutions (relations) have not been found. 
He also proved that partial twists correspond to logarithmic transformations using a different argument. We would like to emphasize that partial twists (including ours) do not always produce non-diffeomorphic PALF's. Indeed, regarding our algorithm, they do not change the isomorphism (hence diffeomorphism) type of a given PALF $X$ before applying the $R$-modifications (see Remark~\ref{sec:algorithm:remark:thm}). 

Our construction develops the ideas used for the Stein handlebody constructions of Akbulut and the author (\cite{AY5}, \cite{AY6}) and the nucleus constructions of the author (\cite{Y5}). As new ingredients, we use PALF's, $R$-modifications and partial twists in stead of Stein handlebodies. We note that our construction is completely different from those in \cite{AEMS}, \cite{AkhOz1} and \cite{AkhOz2}, where exotic Stein fillings were constructed from exotic \textit{closed} Lefschetz fibrations by cutting out their sections.  

\begin{remark}It is a particularly interesting problem to modify our algorithm so that it produces infinitely many pairwise exotic \textit{closed} Lefschetz fibrations. Since we can embed every PALF into a closed Lefschetz fibration (\cite{AO3}), it is natural to ask whether a PALF constructed by our algorithm can be embedded into a closed Lefschetz fibration, so that partial twists produce infinitely many pairwise exotic closed Lefschetz fibrations. We hope to return to these problems in a future work. 
\end{remark}

This paper is organized as follows. In Section~\ref{sec:PALF}, we summarize the facts about PALF's and Stein fillings. We also fix notations of this paper. In Section~\ref{section:main algorithm}, we state the main result Theorem~\ref{sec:algorithm:mainthm}, and introduce the $R$-modification operation and the algorithm. 
In Section~\ref{sec:partial}, we interpret logarithmic transformations into monodromy substitutions and discuss their relations to Auroux's partial twists. In Section~\ref{sec:rotation}, we study how $R$-modifications and Dehn twists change rotation numbers of curves.  In Section~\ref{sec:proof of main}, we prove the main result. In Section~\ref{sec:ex}, we give many examples demonstrating the algorithm. In particular, we study Stein nuclei and prove the theorems stated in this section. 
\medskip\\
\textbf{Acknowledgements}. The author would like to thank Kenta Hayano, Cagri Karakurt and Burak Ozbagci for their useful comments. A special thanks goes to Yuichi Yamada for kindly pointing out Proposition~\ref{sec:ex:prop:non-homeo}.(5), which improved Theorem~\ref{sec:intro:thm:non-homeo_simple}.

\section{Positive allowable Lefschetz fibrations}\label{sec:PALF}
In this section, we fix the notations of this paper, and briefly recall basics of positive allowable Lefschetz fibrations and their relations to open books, Stein structures, and contact structures.  We refer the reader to \cite{OS1} and the references therein for more details. 
For Lefschetz fibrations, we mainly discuss the case where the base is the disk $D^2$, and the fiber is a surface with non-empty boundary.  

\subsection{Notations}
Throughout this paper, we use the following notations. For two groups $G_1, G_2$, we denote $G_1\cong G_2$ if $G_1$ is isomorphic to $G_2$. For oriented curves $C, D$ in an oriented surface, we denote the algebraic intersection number of $C$ and $D$ by $Q(C, D)$. For an oriented surface $F$, we denote the mapping class group of $F$ by $\textnormal{Aut}(F,\partial F)$. Namely $\textnormal{Aut}(F,\partial F)$ is the group of isotopy classes of orientation preserving self-diffeomorphisms of $F$ which fix the boundary pointwise. By a simple closed curve in a surface, we mean a simple closed curve in the interior of the surface. For a simple closed curve $C$ in a surface, we denote by $t_C$ the right handed Dehn twist along $C$. We use the functional notation for compositions of Dehn twists. 
For simple closed curves $C_1,C_2,\cdots, C_n$ in a surface, we denote the composition $t_{C_n}\circ t_{C_{n-1}}\circ \dots \circ t_{C_1}$ by $(C_1,C_2,\cdots,C_n)$. Beware of the differences of the order. Unless otherwise stated, we do not distinguish an isotopy class of curves and that of diffeomorphisms from their respective representatives. 

\subsection{Monodromy factorizations}In the rest of this section, let $F$ be a compact connected oriented surface with non-empty (possibly disconnected) boundary. 

Let $Z$ be a compact connected oriented smooth 4-manifold with non-empty connected boundary. We call a smooth map $f: Z\to D^2$ a positive Lefschetz fibration over $D^2$ with fiber $F$, if it satisfies the following conditions. 

\begin{itemize}
 \item $f$ has finitely many critical values $b_1,b_2,\dots,b_n$ in the interior of $D^2$, and $f$ is a smooth fiber bundle over $D^2-\{b_1,b_2,\dots,b_n\}$ with fiber $F$. 
 \item For each $i$, there is a unique critical point $p_i$ in the singular fiber $f^{-1}(b_i)$. Furthermore,  around each $p_i$ and $b_i$, $f$ is locally given by $f(z_1, z_2)=z_1^2+z_2^2$ with respect to the local complex coordinate charts compatible with the orientations of $Z$ and $D^2$. 
\end{itemize}
Two positive Lefschetz fibrations $f:Z\to D^2$ and $f':Z'\to D^2$ are said to be isomorphic, if there exist orientation-preserving diffeomorphisms $\widetilde{g}:Z\to Z'$ and $g: D^2\to D^2$ satisfying $g\circ f=f'\circ \widetilde{g}$. 

Each singular fiber of a positive Lefschetz fibration $f$ is obtained from a regular fiber $F$ by collapsing a simple closed curve, which is called a vanishing cycle. A positive Lefschetz fibration $f:Z\to D^2$ is called a \textit{positive allowable Lefschetz fibration} (PALF for short), if every vanishing cycle of the fibration is homologically non-trivial in $F$. We also call the total space $Z$ a PALF abusing the terminology. 

The monodromy of a PALF is given by $t_{C_n}\circ t_{C_{n-1}}\circ \dots \circ t_{C_1}$, where $C_1,C_2,\cdots,C_n$ are vanishing cycles of $f$ in $F$. A PALF is determined by a factorization of its monodromy into a composition of right handed Dehn twists (along vanishing cycles). Conversely, for any collection of homologically non-trivial (ordered) simple closed curves $C_1,C_2, \dots, C_n$ in $F$, there exists a PALF with fiber surface $F$ such that its monodromy factorization is $t_{C_n}\circ t_{C_{n-1}}\circ \dots \circ t_{C_1}$. 

The following three operations for monodromy factorizations preserve the isomorphism class of a PALF. The first is to change a monodromy factorization (and the monodromy itself) using cyclic permutations. The second is to replace the original monodromy factorization $(C_1,C_2,\dots,C_n)$ with $(\psi(C_1), \psi(C_2), \dots, \psi(C_{n}))$  for any $\psi\in \textnormal{Aut}(F,\partial F)$. This operation is called a simultaneous conjugation, since $t_{\psi(C_i)}=\psi \circ t_{C_i}\circ \psi^{-1}$. The third is to replace the monodromy factorization $(C_1, \dots,C_i, C_{i+1}, C_{i+2},\dots, C_n)$ with either of the following two factorizations 
\begin{equation*}
(C_1, \dots, C_{i+1}, t_{C_{i+1}}^{-1}(C_{i}), C_{i+2},\dots, C_n), \quad
(C_1, \dots, t_{C_i}(C_{i+1}), C_i, C_{i+2},\dots, C_n).
\end{equation*}
This operation is called an elementary transformation. Conversely, monodromy factorizations of isomorphic PALF's are related to each other by these three operations. Note that the third operation keeps the original monodromy, though the others may change.


\subsection{Open books and contact structures}
An (abstract) open book is a pair $(F, \varphi)$, where $F$ is a compact connected oriented surface with non-empty boundary, and $\varphi$ is an element of $\textnormal{Aut}(F,\partial F)$. Two open books $(F,\varphi)$ and $(F',\varphi')$ are said to be isomorphic, if there exists an orientation-preserving diffeomorphism $f:F\to F'$ such that $f\circ \varphi=\varphi'\circ f$. An open book $(F,\varphi)$ gives a closed oriented 3-manifold $M_{(F, \varphi)}$ by attaching $S^1\times D^2$'s to the mapping torus $[0,1]\times F/((1,x)\sim (0,\varphi(x)))$, where we identify $S^1$-factors of $S^1\times D^2$'s with the boundary $\partial F$. A surface $\{pt.\}\times F\subset I\times F$ is called a page of the open book $(F,\varphi)$. An open book $(F,\varphi)$ induces the compatible contact structure on $M_{(F, \varphi)}$. For a contact structure $\xi$ on a 3-manifold $M$, the support genus $\textnormal{sg}(\xi)$ of $\xi$ is the minimal genus of a page of an open book on $M$ whose induced contact structure is isomorphic to $\xi$ (\cite{EtOz2}). 
A PALF induces an open book $(F, \varphi)$ on the boundary $\partial Z$ of the total space $Z$, where the monodromy $\varphi\in \textnormal{Aut}(F,\partial F)$ is given by $\varphi=t_{C_n}\circ t_{C_{n-1}}\circ \dots \circ t_{C_1}$. 

\subsection{Stein structures on PALF's}\label{sec:Lef:subsec:Stein}
The total space $Z$ of a PALF naturally induces a handle decomposition, and the decomposition induces a Stein structure as follows (\cite{AO1}). The 4-manifold $Z$ is obtained from $F\times D^2$ by attaching 2-handles along vanishing cycles $C_1, C_2, \dots, C_n$ in pairwise distinct pages of the open book $(F, id)$ on $\partial(F\times D^2)$. Since $F\times D^2$ is a boundary sum of $S^1\times D^3$'s, $F\times D^2$ admits a Stein structure, and the induced contact structure on $\partial(F\times D^2)$ is compatible with the open book $(F,id)$. Due to the Legendrian realization principle, we may assume that each $C_i$ is a Legendrian knot in a page of the open book, and that the contact framing coincides with the surface framing. 
Therefore, the Stein structure on $F\times D^2$ extends to $Z$ according to Eliashberg's theorem in \cite{E1}. The induced contact structure $\xi$ on $\partial Z$ is compatible with the open book $(F,\varphi)$ on $\partial Z$ induced from the PALF structure. Consequently, $Z$ is a Stein filling of the contact 3-manifold $(\partial Z,\xi)$. 
\subsection{Handlebody diagram and the first Chern class}\label{subsec:Chern}
Etnyre-Ozbagci~\cite{EtOz2} (See also Proposition 2.3 in \cite{G1}) gave a formula of the first Chern class $c_1(Z)$ of the Stein structure on a PALF $Z$, via the handlebody structure of $Z$ explained in the above subsection. To compute $c_1(Z)$ easily, we use a special handlebody diagram of $Z$ obtained as follows, similarly to \cite{EtOz2}. 

We start with a handlebody diagram of the fiber surface $F$. For a positive integer $k$, choose pairwise disjoint 2-disks $D_1, D_2, \dots, D_k$ in $\mathbb{R}^2$. Here we require that each $D_i$ is a (rounded) rectangle and that any segment of the boundary of the rectangle is parallel to either the $x$- or $y$-axis of $\mathbb{R}^2$. Using bands in $\mathbb{R}^2$, take a boundary sum of $D_1, D_2, \dots, D_k$ so that the resulting surface is diffeomorphic to a disk. This is our 0-handle of $F$ embedded in $\mathbb{R}^2$. We attach each 1-handle of $F$ to the 0-handle either vertically or horizontally. Namely, the two end points of the attaching sphere have the same value with respect to either the $x$- or $y$-axis. The resulting handlebody diagram (i.e.\ the $0$-handle in $\mathbb{R}^2$ with the attaching regions of the 1-handles specified) gives the oriented surface $F$, where we use the orientation induced from the standard orientation of $\mathbb{R}^2$. For such an example, see Figure~\ref{fig:example_handle}, where the 0-handle consists of three (rounded) rectangles, and the three vertical and two horizontal 1-handles are attached to the 0-handle along the red regions. 

\begin{figure}[h!]
\begin{center}
\includegraphics[width=2.6in]{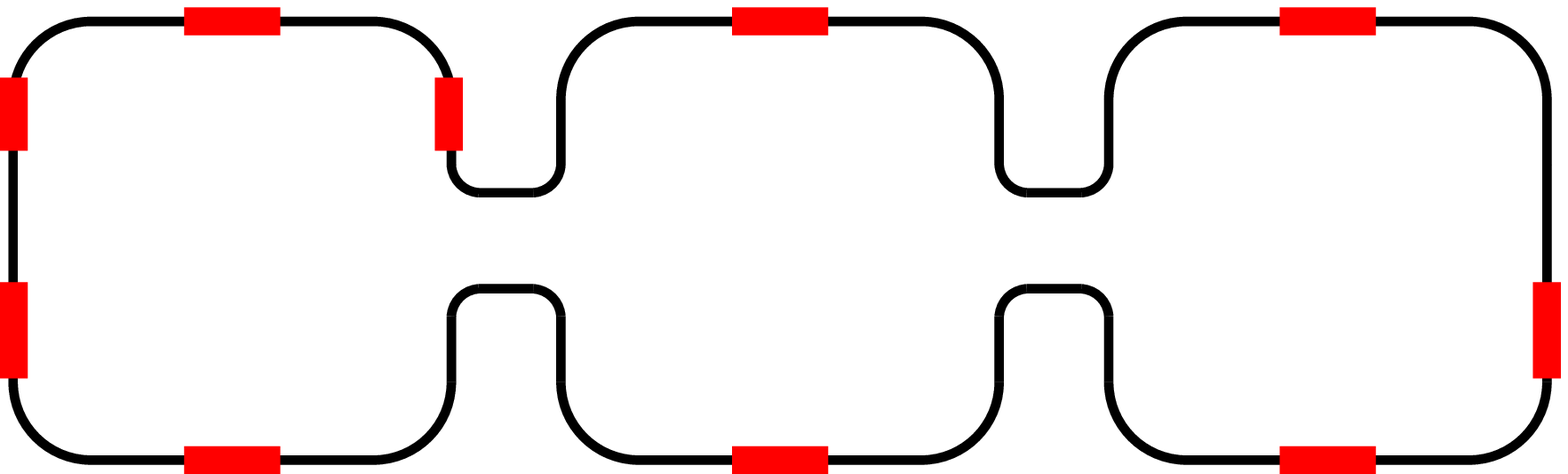}
\caption{Example}
\label{fig:example_handle}
\end{center}
\end{figure}

The handlebody diagram of $F$ leads to a handle diagram of $F\times D^2$ drawn in $\mathbb{R}^3$. We choose points $p_1,p_2,\dots, p_n$ in $\partial D^2$ so that $p_i$ moves to the positive direction of $\partial D^2$ if $i$ increases. We orient vanishing cycles $C_1, C_2, \dots, C_n$ to define the rotation numbers. For each $i$, attach a 2-handle to $F\times D^2$ along the vanishing cycle $C_i$ in the page $F\times p_i$, where the framing of the attaching circle is $-1$ relative to its surface framing. We orient each attaching circle so that its orientation coincides with that of the corresponding vanishing cycle. The resulting picture is a desired handlebody diagram of the PALF $Z$ whose monodromy factorization is $(C_1,C_2,\dots, C_n)$. 

We fix a trivialization of the tangent bundle of the fiber $F$ as follows. We restrict the standard trivialization of the tangent bundle of $\mathbb{R}^2$ to that of the 0-handle of $F$, and then extend the trivialization to the 1-handles of $F$. 

Here, for an oriented simple closed curve $C$ in $F$, we define the rotation number $r(C)$ of $C$ as the winding number of the tangent vector field to $C$ with respect to the above trivialization of the tangent bundle of $F$. Note that any isotopy of a curve does not change its rotation number. The well-known lemma below simplifies calculations of rotation numbers.

\begin{lemma}\label{lem:comp:winding}The winding number of an oriented closed $($possibly non-simple$)$ curve in $\mathbb{R}^2-\{0\}$ equals the algebraic intersection number of any fixed half-line and the curve. 
\end{lemma}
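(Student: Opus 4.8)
The plan is to compute both quantities through the angular coordinate and match them term by term. Parametrize the curve as $\gamma:[0,1]\to\mathbb{R}^2-\{0\}$ with $\gamma(0)=\gamma(1)$, and write $\gamma(t)=\rho(t)\,e^{i\theta(t)}$, where $\rho(t)=|\gamma(t)|>0$ and $\theta:[0,1]\to\mathbb{R}$ is a continuous lift of the angular coordinate. Such a lift exists and is unique up to an additive constant, because $s\mapsto e^{is}$ is a covering map of $S^1$ and $\gamma$ avoids the origin. By definition the winding number about the origin is $w(\gamma)=\frac{1}{2\pi}\bigl(\theta(1)-\theta(0)\bigr)$, and this is an integer since $\gamma$ is closed.

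First I would fix a half-line $L$ emanating from the origin, say at angle $\alpha$, chosen so that $L$ is transverse to $\gamma$; a generic $\alpha$ works, and since an isotopy of $\gamma$ changes neither side of the claimed equality I may freely assume this transversality. Under the lift, the points of $\gamma\cap L$ are exactly the parameters $t$ with $\theta(t)\in\alpha+2\pi\mathbb{Z}$. The key local observation is that the sign of such an intersection, positive when $\gamma$ crosses $L$ in the counterclockwise sense, is precisely the sign with which $\theta$ passes through the corresponding level $\alpha+2\pi k$: a $+1$ when $\theta$ is increasing there and a $-1$ when it is decreasing.

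Next I would observe that the algebraic intersection number is therefore the net signed number of times the continuous function $\theta$ crosses the discrete family of levels $\{\alpha+2\pi k : k\in\mathbb{Z}\}$, which are spaced $2\pi$ apart. For a continuous real function on an interval whose endpoint values avoid the levels, the signed crossing count of such an arithmetic progression equals the total signed displacement $\theta(1)-\theta(0)$ divided by the spacing $2\pi$; here this is immediate because $\theta(1)-\theta(0)\in 2\pi\mathbb{Z}$, so exactly $|w(\gamma)|$ levels lie strictly between $\theta(0)$ and $\theta(1)$, each crossed once with a common sign. Hence the algebraic intersection number equals $\frac{1}{2\pi}\bigl(\theta(1)-\theta(0)\bigr)=w(\gamma)$, and in particular it is independent of the chosen half-line. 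Equivalently, one may phrase the whole argument degree-theoretically: composing $\gamma$ with the retraction $z\mapsto z/|z|$ yields a map $S^1\to S^1$ whose degree is $w(\gamma)$, and the degree is the signed count of preimages of any regular value, namely the points of $\gamma\cap L$.

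The step I expect to require the most care is the sign bookkeeping: verifying that the orientation convention under which an intersection of $\gamma$ with $L$ is counted as $+1$ matches the convention under which $\theta$ increases, so that the two signed counts agree term by term rather than up to an overall sign. Once the orientation conventions are pinned down, the remaining ingredients, namely existence of the continuous lift, transversality for generic $\alpha$, and the elementary level-crossing count for a $2\pi$-spaced progression, are routine.
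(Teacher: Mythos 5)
Your proof is correct, but there is nothing in the paper to compare it against: the paper states this lemma as a ``well-known'' fact and gives no proof at all, so your argument fills a genuine (if deliberate) omission rather than paralleling an existing one. What you wrote is the standard argument: lift the angular coordinate through the covering $\mathbb{R}\to S^1$, identify the intersections of $\gamma$ with the half-line at angle $\alpha$ with the parameters where $\theta$ meets the levels $\alpha+2\pi\mathbb{Z}$, check that the local intersection sign agrees with the sign of $\theta'$, and count net signed level crossings; the closing degree-theoretic reformulation (the intersection count is the signed preimage count of the regular value $e^{i\alpha}$ under $\gamma/|\gamma|:S^1\to S^1$, i.e.\ its degree, which is the winding number) is the cleanest packaging and would suffice on its own. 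One phrase should be tightened: ``exactly $|w(\gamma)|$ levels lie strictly between $\theta(0)$ and $\theta(1)$, each crossed once with a common sign'' is not literally true, since $\theta$ can cross a given level several times (up, down, up); the correct statement, which your earlier sentence about net signed counts already supplies, is that each level strictly between the endpoint values has net signed crossing number $+1$ or $-1$ (the sign of $\theta(1)-\theta(0)$), while every other level has net crossing number $0$. Since you work with net signed counts throughout, this is a wording issue, not a gap; likewise your handling of transversality and of the independence of the chosen half-line (both sides are invariant under homotopy of $\gamma$ in $\mathbb{R}^2-\{0\}$, and every generic angle $\alpha$ yields the same answer $w(\gamma)$) is adequate for a statement at this level.
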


Using the above handlebody diagram of the PALF $Z$, we can compute $c_1(Z)$ as follows. 
\begin{proposition}[\cite{EtOz2}. See also \cite{G1}]\label{sec:PALF:Chern} The first Chern class $c_1(Z)\in H^2(Z;\mathbb{Z})$ of the Stein structure on $Z$ induced from the PALF structure is represented by a cocycle whose value on each $2$-handle corresponding to the vanishing cycle $C_i$ is the rotation number $r(C_i)$. Here we regard $2$-handles as a basis of the $2$-chain group. 
\end{proposition}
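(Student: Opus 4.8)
The plan is to reduce the statement to Gompf's obstruction-theoretic formula for the first Chern class of a Stein handlebody and then to match the combinatorial quantity $r(C_i)$ with the Legendrian rotation number of the corresponding attaching knot. Recall from Subsection~\ref{sec:Lef:subsec:Stein} that $Z$ is assembled as $Z=(F\times D^2)\cup H_1\cup\dots\cup H_n$, where the subcritical Stein domain $F\times D^2$ is a boundary sum of copies of $S^1\times D^3$, and each $H_i$ is a Stein $2$-handle attached along a Legendrian realization of the vanishing cycle $C_i$ in the page $F\times p_i$ with contact framing equal to the surface framing minus one. Write $J$ for the almost complex structure underlying the Stein structure, so that $c_1(Z)=c_1(TZ,J)$. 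Since $F\times D^2$ deformation retracts onto the $1$-complex $F$, its second cohomology vanishes and the complex determinant line bundle $\det_{\mathbb{C}}TZ$ is trivial over the $1$-skeleton; concretely, the trivialization of $TF$ fixed in this subsection (restricted from $\mathbb{R}^2$ on the $0$-handle, extended across the $1$-handles) determines such a trivialization of $\det_{\mathbb{C}}TZ$.

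Next I would invoke the computation of $c_1$ for a Stein handlebody of this type (Gompf, as cited in Proposition~2.3 of \cite{G1}). Regarding the $2$-handles $H_1,\dots,H_n$ as a basis of the relative $2$-chain group, the cocycle representing $c_1(Z)$ assigns to $H_i$ the obstruction to extending the chosen trivialization of $\det_{\mathbb{C}}TZ$ across the core disk of $H_i$. Because the attaching circle is Legendrian, its tangent line lies in the contact plane field $\xi$, which is the field of complex tangencies along the boundary and hence a complex line; the obstruction is exactly the winding number of the tangent to $C_i$ inside $\xi$, measured against the fixed trivialization. This winding number is by definition the Legendrian rotation number $\mathrm{rot}(C_i)$.

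It then remains to identify $\mathrm{rot}(C_i)$ with $r(C_i)$. Along the page $F\times p_i$ the contact planes $\xi$ are homotopic, through complex lines, to the tangent planes $TF$, and the trivialization of $\det_{\mathbb{C}}TZ$ over the $1$-skeleton agrees with the fixed trivialization of $TF$. Under this identification the tangent vector field to the Legendrian knot corresponds to the tangent vector field of the curve $C_i\subset F$, so the winding number inside $\xi$ equals the winding number of the tangent to $C_i$ relative to the trivialization of $TF$, which is precisely $r(C_i)$ (and is readily evaluated using Lemma~\ref{lem:comp:winding}). Combining the three steps yields that the value of $c_1(Z)$ on $H_i$ is $r(C_i)$, as claimed.

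The hard part will be this last identification: one must verify that the trivialization of the contact planes over the $1$-skeleton coming from the complex structure — the one in which $\mathrm{rot}(C_i)$ is defined — genuinely coincides with the combinatorial trivialization of $TF$ fixed here, and that pushing each vanishing cycle off its page to realize it as a Legendrian knot does not alter the winding count. Both points rest on the fact that, in the standard model of the subcritical domain, the contact structure on $\partial(F\times D^2)$ is compatible with the open book $(F,\mathrm{id})$ and its plane field restricts along a page to the tangent planes of that page. Making this homotopy of plane fields precise, and checking its compatibility with the trivializations on the overlap, is the technical core of the argument; it is exactly the content supplied by the Stein handlebody calculus of Gompf and Etnyre--Ozbagci, so in the write-up I would cite it rather than reprove it.
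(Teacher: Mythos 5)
The paper offers no proof of this proposition at all: it is quoted directly from Etnyre--Ozbagci \cite{EtOz2} and Gompf \cite{G1}, so there is nothing internal to compare against. Your outline is the standard argument behind those citations --- triviality of $\det_{\mathbb{C}}TZ$ over the subcritical piece $F\times D^2$, Gompf's obstruction-theoretic evaluation of $c_1$ on the core disk of each Stein $2$-handle as the Legendrian rotation number of its attaching circle, and the identification of that rotation number with the combinatorial winding number $r(C_i)$ via the compatibility of the contact planes with the pages of the open book $(F,\mathrm{id})$ --- and you correctly flag the matching of the two trivializations as the one genuinely technical point and defer it to the same sources, so your write-up is correct and consistent with how the paper treats this statement.
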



As pointed out in \cite{AM}, due to the embedding theorem of Stein manifolds in \cite{LM1} together with the well-known adjunction inequality for closed 4-manifolds (cf.\ \cite{GS}),  we obtain the following adjunction inequality for PALF's (Stein manifolds). Note that this version of the adjunction inequality also holds in the genus zero case (cf.\ \cite{OS1}), unlike the version for closed 4-manifolds. 
\begin{theorem}[\cite{AM}. cf.\ \cite{OS1}]Let $\Sigma$ be a smoothly embedded closed connected oriented surface of genus $g\geq 0$ in a PALF $Z$, and let $[\Sigma]$ be the second homology class of $Z$ represented by $\Sigma$. If $[\Sigma]\neq 0$, then the following adjunction inequality holds. 
\begin{equation*}
\left| \langle c_1(Z), [\Sigma] \rangle \right|+ [\Sigma]\cdot [\Sigma]\leq 2g-2. 
\end{equation*}
\end{theorem}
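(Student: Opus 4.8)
The plan is to reduce the statement to the adjunction inequality for \emph{closed} $4$-manifolds by embedding the Stein domain $Z$ into a closed complex surface, exactly as indicated in \cite{AM}. First I would invoke the embedding theorem of Lisca--Mati\'{c} \cite{LM1}: since $Z$ carries a Stein structure, there is a closed minimal complex surface $X$ of general type with $b_2^+(X)>1$ together with a holomorphic embedding $i\colon Z\hookrightarrow X$ such that the pullback of the first Chern class satisfies $i^{*}c_1(X)=c_1(Z)$. Writing $K_X=-c_1(X)$ for the canonical class, I would then use that for a minimal surface of general type the classes $\pm K_X$ are Seiberg--Witten basic classes.

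Next I would push $\Sigma$ forward into $X$ and record two identifications. Because $\Sigma$ lies in the interior of $Z$, its normal bundle in $X$ agrees with its normal bundle in $Z$, so the self-intersection number is unchanged,
\[
i_*[\Sigma]\cdot i_*[\Sigma]=[\Sigma]\cdot[\Sigma],
\]
both being the normal Euler number of $\Sigma$. Since $i^{*}c_1(X)=c_1(Z)$, the evaluation is likewise preserved,
\[
\langle c_1(X),i_*[\Sigma]\rangle=\langle i^{*}c_1(X),[\Sigma]\rangle=\langle c_1(Z),[\Sigma]\rangle,
\]
and as $K_X=-c_1(X)$ the absolute values $|\langle K_X,i_*[\Sigma]\rangle|$ and $|\langle c_1(Z),[\Sigma]\rangle|$ coincide.

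For the case $g\geq 1$ I would then apply the closed adjunction inequality to $X$: since $b_2^+(X)>1$ and $K_X$ is a basic class, every smoothly embedded closed surface $S\subset X$ of genus $g\geq 1$ satisfies $|\langle K_X,[S]\rangle|+[S]\cdot[S]\leq 2g-2$, with no positivity hypothesis on the self-intersection, by the general form due to Ozsv\'{a}th--Szab\'{o}. Taking $S=i(\Sigma)$ and substituting the two identifications above yields the claimed bound. The hypothesis $[\Sigma]\neq 0$ is used to rule out a trivial surface; note that even if $i_*[\Sigma]$ were to vanish in $X$, the normal Euler number identification would force $[\Sigma]\cdot[\Sigma]=0$ and $\langle c_1(Z),[\Sigma]\rangle=0$, so for $g\geq 1$ the inequality still reads $0\leq 2g-2$.

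The hard part will be the genus zero case, where the closed adjunction inequality is not available in this form (it is precisely here that the Stein statement is stronger than its closed counterpart, cf.\ \cite{GS}). Here I would argue separately, following \cite{OS1}: the minimality and general type of $X$ prevent a homologically essential embedded sphere from having self-intersection $\geq -1$, and combining this with the Chern class constraint gives $|\langle c_1(Z),[\Sigma]\rangle|+[\Sigma]\cdot[\Sigma]\leq -2=2g-2$. The same minimality considerations dispose of the degenerate subcase in which $i_*[\Sigma]=0$ for a sphere, which is the only point where the reduction to the closed setting genuinely breaks down and thus the main obstacle to a uniform argument.
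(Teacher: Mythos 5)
This is essentially the same argument the paper itself gives: the theorem is quoted from \cite{AM}, and the paper's entire justification is the sentence preceding it (the Lisca--Mati\'{c} embedding theorem of \cite{LM1} combined with the closed adjunction inequality, with the genus-zero case deferred to \cite{OS1}), which is exactly the route you take. Your treatment of the $g\geq 1$ case, including the degenerate subcase $i_*[\Sigma]=0$, is correct; the genus-zero step remains only a sketch --- ruling out essential spheres of square $\geq -1$ in a minimal surface of general type does not by itself bound $\left|\langle c_1(Z),[\Sigma]\rangle\right|$ when $[\Sigma]\cdot[\Sigma]\leq -2$ --- but you explicitly defer that to \cite{OS1}, precisely as the paper does.
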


\section{Main result and algorithm}\label{section:main algorithm}
In this section, we state our main result and a corollary and give an algorithm which produces exotic Stein fillings via PALF's. We also introduce the $R$-modification operation.

\subsection{The main result}\label{subsec:S} To state the main theorem, we need to introduce some definitions. Let $\mathbb{S}$ be the compact connected oriented surface of genus one with three boundary components in Figure~\ref{torus_3-holes_v2}, and let $\alpha_1, \alpha_2, \alpha_3, \beta, \delta_1, \delta_2, \delta_3$ and $\tau_1,\tau_2,\tau_3$ be the oriented simple closed curves and the simple proper arcs in $\mathbb{S}$ shown in  Figure~\ref{torus_3-holes_v2}. The orientation of $\mathbb{S}$ is the one satisfying $Q(\alpha_1,\beta)=1$. Note that $\delta_1,\delta_2,\delta_3$ are boundary parallel curves. For integers $i,j$, we define the simple closed curves $\gamma_{i}, \gamma_{i}^{(j)}, \beta^{(j)}$ in $\mathbb{S}$ by
\begin{equation*}
\gamma_{i}=(t_{\alpha_3}\circ t_{\alpha_2}\circ t_{\alpha_1})^i(\beta), \quad \gamma_{i}^{(j)}=t_{\alpha_1}^j(\gamma_{i}), \quad \beta^{(j)}=t_{\alpha_1}^j(\beta). 
\end{equation*}
We orient $\gamma_{i}, \gamma_{i}^{(j)}, \beta^{(j)}$ by extending the orientations of $\beta, \gamma_{i}, \beta$, respectively. 
\begin{figure}[h!]
\begin{center}
\includegraphics[width=1.7in]{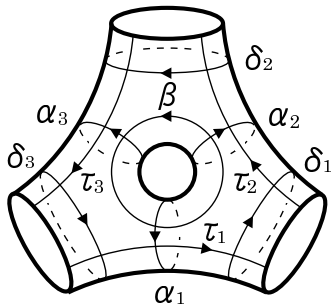}
\caption{The compact surface $\mathbb{S}$ of genus one with three holes}
\label{torus_3-holes_v2}
\end{center}
\end{figure}

We are ready to state our main theorem. Let $X$ be an arbitrary PALF satisfying the following conditions. 
\begin{itemize}
 \item The regular fiber $\Sigma$ of $X$ is a compact oriented surface obtained from $\mathbb{S}$ by attaching 1-handles to the boundary $\partial \mathbb{S}$. We allow the case $\Sigma=\mathbb{S}$. 
 \item A monodromy factorization of $X$ is $(\gamma_{1}, \beta, \gamma_{-1}, C_1, C_2, \dots, C_n)$, where $n$ is an arbitrary positive integer, and $C_1,C_2,\dots, C_n$ are homologically non-trivial arbitrary simple closed curves in the interior of $\Sigma$.
 \item $\alpha_1,\alpha_2, \gamma_{-1}$ are contained in the set $\{C_1, C_2,\dots, C_n\}$.
\end{itemize}
The main theorem of this paper is the following. 
\begin{theorem}\label{sec:algorithm:mainthm}For any PALF $X$ satisfying the above conditions, there exist infinitely many pairwise homeomorphic but non-diffeomorphic Stein fillings of the same contact $3$-manifold such that the fundamental group and the homology group of each filling are isomorphic to those of the given $X$. Furthermore, these fillings satisfy the following conditions. 
\begin{itemize}
 \item For some positive integer $k$, all of these fillings can be smoothly embedded into the same manifold $X\#k\overline{\mathbb{C}{P}^2}$. 
 \item All of these fillings become pairwise diffeomorphic by attaching the same Stein $2$-handle to each filling, namely, by attaching a $2$-handle to each fillings along the same Legendrian knot with contact $-1$-framing. 
 \item If the genus of the fiber $\Sigma$ of $X$ is one, then we may assume that the boundary contact $3$-manifold of these fillings is of support genus one. 
\end{itemize}

\end{theorem}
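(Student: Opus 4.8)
The plan is to realize concretely the algorithm outlined in Subsection~\ref{sec:intro:subsec:on the algorithm}: apply the $R$-modification to the given PALF $X$, and then insert into the distinguished subsequence a family of monodromy substitutions---special cases of Auroux's partial twists \cite{Auroux1,Auroux2}---realizing logarithmic transformations of every multiplicity $m\ge 1$. Concretely, starting from the factorization $(\gamma_1,\beta,\gamma_{-1},C_1,\dots,C_n)$ I would first perform the $R$-modification, enlarging the fiber $\Sigma$ by handles and replacing the initial triple by its modified vanishing cycles; the hypothesis that $\alpha_1,\alpha_2,\gamma_{-1}$ already appear among the $C_i$ is what makes the substitution legitimate without disturbing the remaining curves. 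Using the Stein-fillable open book on $T^3$ of Van Horn-Morris \cite{VHM}, I would then write the substitution inserting the multiplicity-$m$ partial twist, obtaining a PALF $X_m$ for each $m$. Because the partial-twist relations of Section~\ref{sec:partial} are genuine relations in the relevant mapping class group, the induced open book on the boundary is unchanged up to isomorphism, so every $X_m$ is a Stein filling of one fixed contact $3$-manifold.

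Reading $\pi_1$ and $H_*$ directly off the handle decomposition of a PALF, I would check that a partial twist alters neither, so $\pi_1(X_m)\cong\pi_1(X)$ and $H_*(X_m)\cong H_*(X)$ (and likewise the intersection form) for all $m$. The $X_m$ are moreover pairwise homeomorphic, since the partial twists realize \emph{topologically trivial} logarithmic transformations, an operation that preserves the homeomorphism type; in the simply connected case this is just Freedman's classification applied to the matching invariants.

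The main obstacle is the non-diffeomorphism, and here the heart of the argument is the relative genus function of \cite{Y5} combined with the adjunction inequality for PALF's stated above. I would first compute the first Chern class $c_1(X_m)$ from the rotation numbers of the vanishing cycles via Proposition~\ref{sec:PALF:Chern}; the multiplicity-$m$ partial twist changes these rotation numbers linearly in $m$, which is exactly what the computations of Section~\ref{sec:rotation}, tracking how $R$-modifications and Dehn twists affect rotation numbers, are designed to control. Fixing a distinguished surface $\Sigma_m\subset X_m$ produced by the log-transform region, the pairing $\langle c_1(X_m),[\Sigma_m]\rangle$ then grows with $m$, so the adjunction inequality $|\langle c_1,[\Sigma]\rangle|+[\Sigma]\cdot[\Sigma]\le 2g-2$ forces the minimal genus of surfaces in the corresponding relative class (measured with respect to the fixed boundary) to take infinitely many distinct values. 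Since this relative genus function is invariant under diffeomorphisms respecting the common boundary, the $X_m$ fall into infinitely many diffeomorphism types, and passing to an infinite subfamily yields infinitely many pairwise non-diffeomorphic fillings. The delicate point I expect to spend the most effort on is verifying that the class $[\Sigma_m]$ and its surface survive the $R$-modification with the self-intersection and Chern pairing needed to make the adjunction bound genuinely non-constant in $m$.

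It remains to establish the three bulleted properties. For the common embedding, I would note that the partial twist is supported in a standard piece while the $R$-modification contributes a fixed number $k$ of blow-ups, so every $X_m$ embeds smoothly in the single manifold $X\#k\overline{\mathbb{C}{P}^2}$, with $k$ read off from the $R$-modification and independent of $m$. For the single Stein $2$-handle, I would exhibit one Legendrian knot, common to all $X_m$, whose contact $(-1)$-framed $2$-handle cancels the partial twist---turning the multiplicity-$m$ log transform back into the trivial one---so that the enlarged PALF's become mutually isomorphic while remaining Stein. Finally, for support genus one, when $\Sigma=\mathbb{S}$ I would arrange the $R$-modification and the substitutions so that the page genus stays equal to one, giving support genus at most one; the reverse inequality follows because a planar (support-genus-zero) contact $3$-manifold admits only finitely many Stein fillings by the results cited in the introduction, whereas each of ours admits the infinitely many $X_m$, forcing the support genus to equal one.
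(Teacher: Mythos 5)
Your proposal has the right overall architecture ($R$-modifications, monodromy substitutions along the $T^3$ open book, the adjunction inequality fed into the relative genus function), but it misidentifies the mechanism that produces the infinite family, and this is not a cosmetic point. The fillings are \emph{not} obtained by logarithmic transformations of multiplicity $m\ge 1$: every substitution coming from Lemma~\ref{lem: monodromy of T} is a multiplicity-\emph{one} logarithmic transformation (see the remark after Theorem~\ref{thm:log:map}), and the varying parameter is the auxiliary twisting $i$ in $W=t_{\alpha_1}^i$, i.e.\ the direction of the regluing, not the multiplicity. This matters because higher-multiplicity logarithmic transformations are exactly the ones that destroy Stein fillability (as happens for Gompf nuclei, cf.\ Subsection~\ref{sec:ex:subsec:nuclei}), so a family built that way could not consist of Stein fillings. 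You also conflate this varying parameter with the number of $R$-modifications: in the actual construction the tuple $m$ of $R$-modification counts is \emph{fixed once and for all}, subject to the lower bounds of Step~\ref{step:condition:exotic}, and without those quantitative bounds the partial twists produce isomorphic PALF's (Lemma~\ref{sec:partial:lem:condition:unchange}), so the family would be trivial. Your sentence that the presence of $\alpha_1,\alpha_2,\gamma_{-1}$ among the $C_j$ ``makes the substitution legitimate'' misses their real role: $\alpha_1\in\{C_j\}$ is what makes the \emph{unmodified} partial twists trivial (hence gives the common embedding target $X\#k\overline{\mathbb{C}P^2}$ and the common capping $2$-handle), while $\alpha_1,\alpha_2,\gamma_{-1}$ together supply the nucleus-like submanifold whose homology classes drive the genus estimates.

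Two further steps would fail as written. First, the fillings are \emph{not} all pairwise homeomorphic: the intersection form of the key submanifold $N_i^{(m')}$ changes parity with $i$, so only the fillings with $i$ of a fixed parity are homeomorphic (Proposition~\ref{sec:algorithm:prop:PALF}(2),(3)); and since the manifolds have boundary and arbitrary $\pi_1$, the homeomorphism is obtained by applying Boyer's theorem \cite{B} to the simply connected nucleus with homology-sphere boundary and extending by the identity, not by Freedman's classification. Second, your proof that the support genus is at least one rests on the claim that every planar contact $3$-manifold has finitely many Stein fillings, which is not established (the introduction only asserts this for ``various'' such structures); the correct argument is that each filling has indefinite intersection form, which is forbidden for fillings of planar contact structures by Etnyre's theorem \cite{Et3}. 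Finally, the non-diffeomorphism argument as you sketch it only bounds the genus in one distinguished class; the actual difficulty, which your plan does not address, is controlling \emph{every} basis of $H_2$ compatible with the intersection form and the genus constraints (this is what the relative genus function and the sign adjustments of the $R^{\pm}$-modifications in Part~3 of the proof of Proposition~\ref{sec:proof:prop:relative genus} are for).
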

\begin{remark}While we required that the specific curves $\alpha_1,\alpha_2, \gamma_{-1}$ are contained in the set $\{C_1, C_2,\dots, C_n\}$, we can replace these three curves with more general curves in $\Sigma$. This can be seen from the proof, and we omit the details. 
\end{remark}
In the rest of this section, we give an algorithm which produces these exotic Stein fillings from the given $X$. We also state a corollary. We need to introduce some definitions and operations. 

\subsection{A handle decomposition $\widehat{\mathbb{S}}$ of $\mathbb{S}$}\label{subsec:handlebody of S}
We first fix a handle decomposition of the surface $\mathbb{S}$.  Let $\widehat{\mathbb{S}}$ be the 2-dimensional handlebody in Figure~\ref{fig:S_hat_v2}, where three vertical and one horizontal 1-handles are attached to the $0$-handle in $\mathbb{R}^2$ along the red regions. Though this 0-handle consists of only one rectangle in $\mathbb{R}^2$, if necessary, we may assume that the 0-handle of $\widehat{\mathbb{S}}$ contains arbitrarily many rectangles by taking boundary sums with rectangles in $\mathbb{R}^2$. We orient $\widehat{\mathbb{S}}$ by extending the orientation of the $0$-handle induced from the standard orientation of $\mathbb{R}^2$ to the 1-handles. 

Let $\alpha_1, \alpha_2, \alpha_3,\beta$ be the oriented simple closed curves in $\widehat{\mathbb{S}}$ shown in Figure~\ref{fig:S_hat_v2}, and let $\tau_1, \tau_2, \tau_3,\tau_{\beta}$ be the cocores of the 1-handles which intersect with $\alpha_1, \alpha_2, \alpha_3,\beta$, respectively. We orient each $\tau_i$ and $\tau_{\beta}$ so that $Q(\alpha_i, \tau_i)=+1$ and $Q(\beta, \tau_{\beta})=+1$. Note that there exists a diffeomorphism $\psi_{\mathbb{S}} :\mathbb{S}\to \widehat{\mathbb{S}}$ which sends the oriented curves and arcs $\alpha_1, \alpha_2, \alpha_3, \tau_1,\tau_2,\tau_3,\beta$ in Figure~\ref{torus_3-holes_v2} to those in Figure~\ref{fig:S_hat_v2}. 
\begin{figure}[h!]
\begin{center}
\includegraphics[width=1.8in]{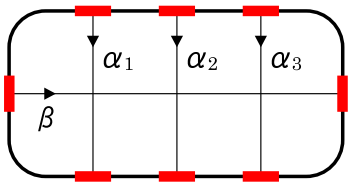}
\caption{The handle decomposition $\widehat{\mathbb{S}}$ of $\mathbb{S}$}
\label{fig:S_hat_v2}
\end{center}
\end{figure}

\subsection{$R$-modification} We next introduce an operation of simple closed curves in surfaces. 
Let $C$ be a simple closed curve in a compact oriented surface $F$ with non-empty (possibly disconnected) boundary. Attach a 1-handle to $F$ so that the resulting surface $F'$ is oriented. Note that the number of boundary components and the genus of $F'$ depend on the choice of the new 1-handle. 
Let $E$ be an arbitrary simple closed curve in $F'$ such that $E$ intersects with the cocore of the new 1-handle geometrically once, and that $E$ does not geometrically intersect with $C$. Let $C'$ be a band connected sum of $C$ and a parallel copy of $E$ in $F'$.

\begin{definition}We say that the above operation is an \textit{$R$-modification} to $C$, that $C'$ (resp.\ $F'$) is the curve (resp.\ the oriented surface) obtained by applying the $R$-modification to $C$, and that $E$ is the \textit{auxiliary curve} of the $R$-modification.
\end{definition}

Figure~\ref{fig:R-modification} describes a simple example of an $R$-modification. 
\begin{figure}[h!]
\begin{center}
\includegraphics[width=3.4in]{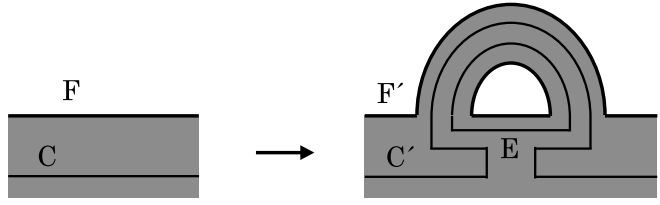}
\caption{An example of an $R$-modification to $C$}
\label{fig:R-modification}
\end{center}
\end{figure}

\begin{remark}$(1)$ As seen from Figure~\ref{fig:R-modification}, we can always apply an $R$-modification to a given curve $C$ so that the genus of the resulting surface $F'$ is equal to the genus of the original surface $F$. 

$(2)$ We introduced $R$-modifications, inspired from the stabilizations of Legendrian knots in \cite{Et3} and \cite{On}. Indeed, due to \cite{Et3} and \cite{On}, special $R$-modifications correspond to stabilizations of Legendrian knots via open books as follows. Assume that $C$ is a homologically non-trivial curve in a page of an open book $(F,\varphi)$. Due to the Legendrian realization principle, we can regard $C$ as a Legendrian knot in the contact 3-manifold compatible with $(F, \varphi)$. Apply an $R$-modification to $C$ as shown in Figure~\ref{fig:R-modification}. Then the resulting curve $C'$, which sits in a page of the stabilized open book $(F', \varphi\circ t_{E})$, realizes a stabilization of the Legendrian knot $C$. 
\end{remark}

\subsection{The algorithm}\label{subsec:algorithm}
Now we are ready to introduce our algorithm. Let $X$ be a PALF satisfying the conditions in Subsection~\ref{subsec:S}. In the following steps, we modify the vanishing cycles and the fiber of the given $X$. 
\begin{step}\label{step:modified PALF} Fix an $n$-tuple $m=(m_1,m_2,\dots, m_n)$ of non-negative integers. For each $1\leq j\leq n$, apply $R$-modifications to $C_j$ $m_j$ times (see Figure~\ref{fig:step_1} for an example of such modifications). Let $C_j(m_j)$ denote the resulting curve obtained from $C_j$, and let $E_k^{j}$ denote the auxiliary curve of the $k$-th $R$-modification to $C_j$. Let $\Sigma^{(m)}$ denote the oriented surface obtained from $\Sigma$ by applying these $m_1, m_2, \dots, m_n$ times $R$-modifications to $C_1,C_2, \dots, C_n$, respectively. For an integer $i$, let $X^{(m)}$, $X^{(m)}_i$ and $\widetilde{X}^{(m)}_i$ be the PALF's with fiber $\Sigma^{(m)}$ whose monodromy factorizations are
\begin{align*}
(\gamma_{1}, \beta, \gamma_{-1}, \, & E_1^{1}, E_2^{1}, \cdots, E_{m_1}^{1}, C_1(m_1), \\
 &E_1^{2}, E_2^{2}, \dots, E_{m_2}^{2}, C_2(m_2), \dots, E_{1}^{n}, E_{2}^{n}, \dots, E_{m_{n}}^{n}, C_n(m_n)),\\
(\gamma_{1}^{(i)}, \beta^{(i)}, \gamma_{-1}^{(i)}, \, & E_1^{1}, E_2^{1}, \cdots, E_{m_1}^{1}, C_1(m_1), \\
 &E_1^{2}, E_2^{2}, \dots, E_{m_2}^{2}, C_2(m_2), \dots, E_{1}^{n}, E_{2}^{n}, \dots, E_{m_{n}}^{n}, C_n(m_n)),\\ 
(\gamma_{1}^{(i)}, \beta^{(i)}, \gamma_{-1}^{(i)}, \, & E_1^{1}, E_2^{1}, \cdots, E_{m_1}^{1}, C_1(m_1), \\
 &E_1^{2}, E_2^{2}, \dots, E_{m_2}^{2}, C_2(m_2), \dots, E_{1}^{n}, E_{2}^{n}, \dots, E_{m_{n}}^{n}, C_n(m_n), \alpha_1), 
 \end{align*}
 respectively.  Clearly, $X^{(0)}=X$ and $X^{(m)}_0=X^{(m)}$. Note that the monodromy factorization of $X^{(m)}$ is obtained from the one of $X$ by replacing each $(C_j)$ with 
\begin{equation*}
(E_1^{j}, E_2^{j}, \cdots, E_{m_j}^{j}, C_j(m_j)). 
\end{equation*}
We equip $X^{(m)}$ and $X^{(m)}_i$ with the Stein structures induced from their respective PALF structures. 
Let $\xi^{(m)}$ and $\xi^{(m)}_i$ be the contact structures on the boundaries $\partial X^{(m)}$ and $\partial X^{(m)}_i$ induced from the Stein structures on $X^{(m)}$ and $X^{(m)}_i$, respectively. This completes Step~\ref{step:modified PALF}. \qed
\end{step}

\begin{figure}[h!]
\begin{center}
\includegraphics[width=4.8in]{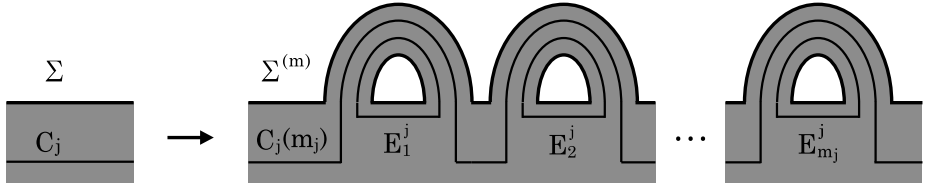}
\caption{An example of Step~\ref{step:modified PALF}}
\label{fig:step_1}
\end{center}
\end{figure}

\begin{remark}$(1)$ The PALF structures (e.g.\ the genus of the fiber $\Sigma^{(m)}$) on $X^{(m)}$ and $X^{(m)}_i$ depend on the choices of $R$-modifications. However, the diffeomorphism types of $X^{(m)}$ and $X^{(m)}_i$ do not depend on the way to apply them. See Lemma~\ref{sec:R:lem:diffeo:modification}. 

$(2)$ Each $X^{(m)}_i$ is obtained from $X^{(m)}$ by using a logarithmic transformation. See Section~\ref{sec:partial}. 
\end{remark}

Though Step~\ref{step:modified PALF} essentially finishes the construction, we need more steps to find an appropriate $n$-tuple $m$. Here we list the properties of $X_i^{(m)}$'s, which are proved in Section~\ref{sec:proof of main}. 
\begin{proposition}\label{sec:algorithm:prop:PALF}
Fix an arbitrary $n$-tuple $m=(m_1,m_2,\dots, m_n)$ of non-negative integers. Then the following hold. 

$(1)$ The fundamental group and the homology group of each $X^{(m)}_i$ $(i\in \mathbb{Z})$ are isomorphic to those of $X$. 

$(2)$ $X_{2i}^{(m)}$'s $(i\in \mathbb{Z})$ are all homeomorphic to $X^{(m)}$. 

$(3)$ $X_{2i-1}^{(m)}$'s $(i\in \mathbb{Z})$ are pairwise homeomorphic. 

$(4)$ The open book on each $\partial X_i^{(m)}$ $(i\in \mathbb{Z})$ induced from the PALF $X_i^{(m)}$ is isomorphic to the one on $\partial X^{(m)}$ induced from the PALF $X^{(m)}$. Consequently, each $(\partial X_i^{(m)},\xi^{(m)}_i)$ is contactomorphic to $(\partial X^{(m)},\xi^{(m)})$. 

$(5)$ The intersection form of each $X^{(m)}_i$ $(i\in \mathbb{Z})$ is indefinite. Consequently, $\textnormal{sg}(\xi^{(m)}_i)\geq 1$. 

$(6)$ Each $X_{i}^{(m)}$ $(i\in \mathbb{Z})$ can be smoothly embedded into $X\#_{j=1}^n m_j\overline{\mathbb{C}{P}^2}$.

$(7)$ The PALF's $\widetilde{X}_{i}^{(m)}$'s $(i\in \mathbb{Z})$ are pairwise isomorphic. Consequently, $X_{i}^{(m)}$'s $(i\in \mathbb{Z})$ are sub-PALF's of the same PALF $\widetilde{X}_{0}^{(m)}$. 
\end{proposition}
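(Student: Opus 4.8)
The plan is to derive all seven assertions from one algebraic input together with standard handle calculus. The ingredient I would isolate first is the monodromy relation that drives the construction: in $\textnormal{Aut}(\mathbb{S},\partial\mathbb{S})$ the product $t_{\gamma_{-1}}\circ t_{\beta}\circ t_{\gamma_1}$ is a product of boundary-parallel Dehn twists, equivalently a central element. This is the relation coming from the Van Horn-Morris open book on $T^3$ \cite{VHM} and from Auroux's partial twists \cite{Auroux1,Auroux2}, developed in Section~\ref{sec:partial}. Writing $R=t_{\alpha_3}t_{\alpha_2}t_{\alpha_1}$ so that $\gamma_{\pm1}=R^{\pm1}(\beta)$, and using $\gamma_i^{(j)}=t_{\alpha_1}^{j}(\gamma_i)$, I record the consequence $t_{\gamma^{(i)}_{-1}}t_{\beta^{(i)}}t_{\gamma^{(i)}_1}=t_{\alpha_1}^{i}\bigl(t_{\gamma_{-1}}t_{\beta}t_{\gamma_1}\bigr)t_{\alpha_1}^{-i}=t_{\gamma_{-1}}t_{\beta}t_{\gamma_1}$, where the last equality holds because $\alpha_1$ is disjoint from $\partial\mathbb{S}$, so $t_{\alpha_1}$ commutes with the central product $P_0:=t_{\gamma_{-1}}t_{\beta}t_{\gamma_1}$. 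Thus the first-block monodromy $P_i$ of $X^{(m)}_i$ is independent of $i$.

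For (1) and (4) I would argue purely algebraically from the handle decomposition of Subsection~\ref{sec:Lef:subsec:Stein}. Here $\pi_1(X^{(m)}_i)=\pi_1(\Sigma^{(m)})$ modulo the normal closure of the vanishing cycles, and $H_*$ is read off the associated chain complex. Each $R$-modification adds a surface $1$-handle (a new $\pi_1$-generator) together with its auxiliary curve $E$, which meets the new cocore once and hence kills that generator; since $C_j(m_j)$ is a band sum of $C_j$ with copies of the $E$'s, it represents $[C_j]$ once the $E$'s are killed, so the $R$-modifications leave $\pi_1$ and $H_*$ unchanged. Replacing $\gamma_1,\beta,\gamma_{-1}$ by $\gamma_1^{(i)},\beta^{(i)},\gamma_{-1}^{(i)}$ also changes nothing, because $\alpha_1$ is itself a vanishing cycle: capping $\alpha_1$ by a disk makes $t_{\alpha_1}$ a twist about a nullhomotopic curve, hence the identity on $\pi_1(\Sigma^{(m)})/\langle\!\langle\alpha_1\rangle\!\rangle$, through which $\pi_1(X^{(m)}_i)$ factors (the same reduction handles $H_1$ and $H_2$). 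This proves (1). For (4), the boundary monodromy of $X^{(m)}_i$ is $T\circ P_i=T\circ P_0$ with $T$ the fixed tail product, so all the boundary open books are literally equal; the contactomorphism statement is then immediate.

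For (5) and (6) I would use the geometry of the genus-one part. The handles along $\gamma_1,\beta,\gamma_{-1}$ together with the torus in the fiber produce a ``Stein nucleus'' whose intersection form is of the indefinite type $\left(\begin{smallmatrix}0&1\\1&\ast\end{smallmatrix}\right)$, as in \cite{Y5}; this summand is untouched by the shift and the $R$-modifications and injects into $H_2(X^{(m)}_i)$ preserving the form, so each intersection form is indefinite. Then $\textnormal{sg}(\xi^{(m)}_i)\ge1$ follows because a support-genus-zero (planar) contact structure bounds only negative-definite fillings \cite{Et3}, contradicting indefiniteness. For (6) I would realize $X^{(m)}_i$ inside $X\#_{j=1}^n m_j\overline{\mathbb{C}P^2}$ by using the exceptional classes created by the $R$-modifications to absorb the effect of the partial twist, following the blow-up/embedding technique of \cite{Y5}.

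Finally, for (7) the appended $\alpha_1$ is exactly what lets me shift $i$: after a cyclic permutation I place $\alpha_1$ adjacent to the block $(\gamma_1^{(i)},\beta^{(i)},\gamma_{-1}^{(i)})$ and apply the partial-twist substitution from the relation above to pass from $i$ to $i+1$ while fixing every other vanishing cycle, yielding $\widetilde X^{(m)}_i\cong\widetilde X^{(m)}_{i+1}$ for all $i$; removing the last handle exhibits each $X^{(m)}_i$ as a sub-PALF of $\widetilde X^{(m)}_0$. The crucial point is that this substitution is \emph{not} available without the extra $\alpha_1$, which is why the $X^{(m)}_i$ themselves need not be isomorphic. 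For (2) and (3) I would compute the intersection form as the linking matrix of the $2$-handles with page framings; because $\beta$ meets $\alpha_1$ geometrically once, the framing of the $\beta^{(i)}$-handle changes by an odd amount per unit step, so the forms for $i$ and $i+2$ are related by a unimodular change of basis (matching $X^{(m)}$ for even $i$ and $X^{(m)}_1$ for odd $i$) while $i\mapsto i+1$ need not preserve the relevant $\mathbb{Z}/2$ data; combined with the equalities of $\pi_1$ and of the boundary from (1) and (4), Freedman's theorem and its extension to $4$-manifolds with boundary give the homeomorphisms within each parity class. \textbf{The two steps I expect to be the real obstacles} are establishing the partial-twist relation $P_0=$ (central/boundary twists) rigorously in $\textnormal{Aut}(\Sigma^{(m)},\partial\Sigma^{(m)})$ (Section~\ref{sec:partial}, via the rotation-number bookkeeping of Section~\ref{sec:rotation}), and verifying that the intersection-form isometry in (2)--(3) is compatible with the fixed boundary so that the classification theorem actually applies.
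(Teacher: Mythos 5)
Your treatment of (1), (4), (5), (6) and (7) follows essentially the same route as the paper: $R$-modifications only decrease framings after cancelling the new $1$-handles (Lemma~\ref{sec:R:lem:diffeo:modification}), the first block of the monodromy is independent of $i$ (Lemma~\ref{lem: monodromy of T}), indefiniteness comes from the embedded nucleus together with Etnyre's planar-filling theorem, the embeddings in (6) come from blow-ups plus the fact that $X_i^{(0)}\cong X$, and (7) is exactly the elementary-transformation argument of Lemma~\ref{sec:partial:lem:condition:unchange}. However, your ``one algebraic input'' is false as stated: $t_{\gamma_{-1}}\circ t_{\beta}\circ t_{\gamma_1}$ is \emph{not} a product of boundary-parallel Dehn twists and is \emph{not} central in $\textnormal{Aut}(\mathbb{S},\partial \mathbb{S})$. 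By the star relation it equals $\Phi=t_{\alpha_3}^{-3}\circ t_{\alpha_2}^{-3}\circ t_{\alpha_1}^{-3}\circ t_{\delta_3}\circ t_{\delta_2}\circ t_{\delta_1}$, which sends $\beta$ to $\gamma_{-3}$ and therefore does not commute with $t_{\beta}$. What your argument actually needs, and what is true, is only that this product commutes with $t_{\alpha_1}$, because $\Phi$ is a product of twists about curves disjoint from $\alpha_1$. So that flaw is repairable, but the relation as you state it is wrong.

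The genuine gap is in (2) and (3). You compare the intersection forms of the whole manifolds $X_i^{(m)}$ and then invoke ``Freedman's theorem and its extension to $4$-manifolds with boundary,'' using the equality of $\pi_1$ and of the boundary. But $X$ is an arbitrary PALF subject only to the vanishing-cycle hypotheses: its fundamental group can be any finitely presented group (this generality is exactly what Theorem~\ref{intro:thm:most2-handlebody} exploits), and there is no Freedman--Boyer-type classification of compact $4$-manifolds with boundary and arbitrary fundamental group. Knowing that $\pi_1$, the boundary, and the intersection forms agree does not produce a homeomorphism. The paper's proof avoids this by localizing the problem: by Theorem~\ref{thm:log:map}, $X_j^{(m)}$ is obtained from $X_i^{(m)}$ by cutting out the submanifold $N_i^{(m')}$ and regluing $N_j^{(m')}$; these nuclei are simply connected, have homology-$3$-sphere boundary, and have unimodular intersection forms that are isomorphic when $i\equiv j\pmod{2}$ (Lemma~\ref{sec:proof:lem:top:nuclei}); Boyer's theorem (Corollary 0.9 of \cite{B}) then extends the \emph{given} gluing homeomorphism of the boundaries across the nuclei, and one finishes by extending with the identity over the complement, which is independent of $i$. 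Note also that Boyer's theorem is applied to a specific boundary homeomorphism (the log-transform gluing map); it is not enough to know abstractly that the boundaries are homeomorphic and the forms isomorphic. Your parity computation of framings is the right heuristic for why the nucleus forms agree mod $2$, but without the localization step no classification theorem applies to the $X_i^{(m)}$ themselves.
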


We proceed the algorithm. To define the rotation numbers for curves in $\Sigma$, we need a handle decomposition of $\Sigma$ as in Subsection~\ref{subsec:Chern}. 
\begin{step}\label{step:handle} Find a handle decomposition $\widehat{\Sigma}$ of $\Sigma$ satisfying the following (Such a decomposition clearly exists.). 
\begin{itemize}
 \item The handlebody $\widehat{\Sigma}$ is obtained from $\widehat{\mathbb{S}}$ by attaching 1-handles to the boundary of the 0-handle of $\widehat{\mathbb{S}}$.  
 \item Each 1-handle of $\widehat{\Sigma}$ is attached either vertically or horizontally. Namely, two attaching points of each 1-handle have the same coordinate with respect to either the $x$- or $y$-axis of $\mathbb{R}^2$.  
 \item There exists a diffeomorphism $\psi_{\Sigma}:\Sigma\to \widehat{\Sigma}$ whose restriction to $\mathbb{S}$ is the diffeomorphism $\psi_{\mathbb{S}} :\mathbb{S}\to \widehat{\mathbb{S}}$ in Subsection~\ref{subsec:S}. 
\end{itemize}
This completes Step~\ref{step:handle}. \qed
\end{step}

Finally, we choose an appropriate $n$-tuple $m$. Roughly speaking, the conditions below require that each $m_j$ is sufficiently large. 
\begin{step}\label{step:condition:exotic}  Regard curves and arcs in $\Sigma$ as those in  $\widehat{\Sigma}$ via the diffeomorphism $\psi_{\Sigma}$ given by Step~\ref{step:handle}. Orient each simple closed curve $C_j$ $(1\leq j\leq n)$ so that the oriented curves $\alpha_1,\alpha_2, \gamma_{-1}$ are contained in the set  $\{C_1, C_2,\dots, C_n\}$. Trivialize the tangent bundle of the handlebody $\widehat{\Sigma}$ as explained in Subsection~\ref{subsec:Chern}, and  let $r(C_j)$ $(1\leq j\leq n)$ denote the rotation number of $C_j$ with respect to this trivialization. Let $J$ and $J_{\alpha_1}$ be the sets of indices defined by
\begin{align*}
J&=\{j\in \mathbb{N}\mid 1\leq j\leq n, \; \; Q(C_j,\tau_1)-Q(C_j,\tau_3)\neq 0\}, \\
J_{\alpha_1}&=\{j\in J\mid \text{$C_j$ is isotopic to $\alpha_1$ in $\Sigma$,}\\
&\hspace{1in}\text{preserving the orientations of $C_j$ and $\alpha_1$.}\}.
\end{align*}
Find an $n$-tuple $m=(m_1,m_2,\dots, m_n)$ of non-negative integers which satisfies the following inequality for each $1\leq j\leq n$. 
\begin{equation*}
m_j\geq \left\{
\begin{array}{lll}
|r(C_{j})|+|Q(C_j,\tau_1)-Q(C_j,\tau_3)|,   &\text{if $j\in J_{\alpha_1};$}\\
|r(C_{j})|+|Q(C_j,\tau_1)-Q(C_j,\tau_3)|, &\text{if $j\in J-J_{\alpha_1}$, and $m_k\equiv 1$}\\
&\text{$\pmod{2}$ for some $k\in J_{\alpha_1};$}\\
|r(C_{j})|+2|Q(C_j,\tau_1)-Q(C_j,\tau_3)|, &\text{if $j\in J-J_{\alpha_1}$, and $m_k\equiv 0$}\\
&\text{$\pmod{2}$ for any $k\in J_{\alpha_1};$}\\
0, &\text{if $j\not\in J$.}
\end{array}
\right.
\end{equation*}
This completes Step~\ref{step:condition:exotic} and the algorithm. \qed 
\end{step}
\begin{remark}\label{sec:main:rem:step3}(1) The rotation numbers of $\alpha_1, \alpha_2, \alpha_3, \beta, \gamma_{-1},\gamma_{1}$ are as follows.  
\begin{equation*}
r(\alpha_1)=r(\alpha_2)=r(\alpha_3)=r(\beta)=r(\gamma_{-1})=r(\gamma_{1})=0.
\end{equation*}
One can easily check this. See also Lemma~\ref{lem:rotation:dehn}. 

(2) For $j\in J_{\alpha_1}$, the above condition is equivalent to the condition $m_j\geq 1$. 

(3) If $C_j$ is contained in the complement of $\mathbb{S}$ in $\Sigma$, then $m_j=0$ satisfies the above condition. 
\end{remark}

The resulting $X^{(m)}_i$'s satisfy the following, which is proved in Section~\ref{sec:proof of main}. 
\begin{theorem}\label{sec:algorithm:thm:exotic_PALF}
Fix an $n$-tuple $m=(m_1,m_2,\dots, m_n)$ of non-negative integers satisfying the conditions in Step~\ref{step:condition:exotic}. Then the following hold. 

$(1)$ $X_{2i}^{(m)}$'s $(i\in \mathbb{Z})$ are pairwise homeomorphic Stein fillings of the same contact $3$-manifold $(\partial X^{(m)}, \xi^{(m)})$. Moreover, infinitely many of them are pairwise non-diffeomorphic.  

$(2)$ $X_{2i-1}^{(m)}$'s $(i\in \mathbb{Z})$ are pairwise homeomorphic Stein fillings of the same contact $3$-manifold $(\partial X^{(m)}, \xi^{(m)})$. Moreover, infinitely many of them are pairwise non-diffeomorphic.   

$(3)$ The fundamental group and the homology group of each $X^{(m)}_i$ $(i\in \mathbb{Z})$ are isomorphic to those of $X$. 

$(4)$ $\textnormal{sg}(\xi^{(m)})\geq 1$. Consequently, $\textnormal{sg}(\xi^{(m)})=1$, if the genus of the fiber $\Sigma^{(m)}$ of $X^{(m)}$ is one. 

$(5)$ Each $X_{i}^{(m)}$ $(i\in \mathbb{Z})$ can be smoothly embedded into $X\#_{j=1}^n m_j\overline{\mathbb{C}{P}^2}$.

$(6)$ $X_{i}^{(m)}$'s $(i\in \mathbb{Z})$ become pairwise diffeomorphic by attaching a $2$-handle to each $X_{i}^{(m)}$ along the same Legendrian knot in $(\partial X^{(m)}, \xi^{(m)})$ with the contact $-1$-framing. 
\end{theorem}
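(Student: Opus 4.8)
The plan is to reduce nearly everything to the already-established Proposition~\ref{sec:algorithm:prop:PALF} and to concentrate the real work on the non-diffeomorphism assertions in parts (1) and (2). First, the bookkeeping parts are free: part (3) is exactly Proposition~\ref{sec:algorithm:prop:PALF}(1), and part (5) is Proposition~\ref{sec:algorithm:prop:PALF}(6). For part (4), the inequality $\textnormal{sg}(\xi^{(m)})\geq 1$ is Proposition~\ref{sec:algorithm:prop:PALF}(5); since $X^{(m)}$ is a PALF with fiber $\Sigma^{(m)}$, the induced open book on $\partial X^{(m)}$ has page $\Sigma^{(m)}$, so when $\Sigma^{(m)}$ has genus one we also get $\textnormal{sg}(\xi^{(m)})\leq 1$, forcing equality. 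For part (6), Proposition~\ref{sec:algorithm:prop:PALF}(7) says all the $\widetilde{X}_i^{(m)}$ are isomorphic, and each $\widetilde{X}_i^{(m)}$ is obtained from $X_i^{(m)}$ by appending the single vanishing cycle $\alpha_1$, i.e.\ by attaching one Stein $2$-handle along a Legendrian realization of $\alpha_1$ with contact $(-1)$-framing. I would use the contactomorphisms of Proposition~\ref{sec:algorithm:prop:PALF}(4) to identify every boundary $(\partial X_i^{(m)},\xi_i^{(m)})$ with $(\partial X^{(m)},\xi^{(m)})$ and thereby check that this $2$-handle is attached along \emph{the same} Legendrian knot for all $i$; the results are the pairwise isomorphic $\widetilde{X}_i^{(m)}$, hence pairwise diffeomorphic.

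For parts (1) and (2), the claims that the $X_{2i}^{(m)}$ (resp.\ $X_{2i-1}^{(m)}$) are pairwise homeomorphic are Proposition~\ref{sec:algorithm:prop:PALF}(2) (resp.\ (3)), and that they all fill $(\partial X^{(m)},\xi^{(m)})$ is Proposition~\ref{sec:algorithm:prop:PALF}(4). Thus the only genuinely new content is that, within each family, infinitely many members are pairwise non-diffeomorphic.

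To detect the smooth structures I would proceed as follows. The three $i$-dependent vanishing cycles of $X_i^{(m)}$ are $t_{\alpha_1}^{i}(\gamma_1),t_{\alpha_1}^{i}(\beta),t_{\alpha_1}^{i}(\gamma_{-1})$, while the cycles $E_k^{j},C_j(m_j)$ are independent of $i$; moreover, by Lemma~\ref{lem:rotation:dehn} together with $r(\alpha_1)=0$, the rotation numbers of these three cycles vanish for every $i$, so by Proposition~\ref{sec:PALF:Chern} the cocycle representing $c_1(X_i^{(m)})$ on the $2$-handle basis is independent of $i$. Hence the $i$-dependence is carried entirely by the homology classes and the framings of the handles, through the fact that $[t_{\alpha_1}^{i}(\beta)]=[\beta]+i[\alpha_1]$ (and similarly for $\gamma_{\pm1}$). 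Following the strategy of \cite{Y5} (and \cite{AY5}, \cite{AY6}), I would fix, via the boundary identification of Proposition~\ref{sec:algorithm:prop:PALF}(4), a relative homology class in $(X_i^{(m)},\partial X^{(m)})$ carried by a surface built from the handle structure, and estimate its minimal relative genus. The upper bound comes from an explicit surface in the handlebody together with the embedding into $X\#_{j=1}^{n}m_j\overline{\mathbb{C}{P}^2}$ of Proposition~\ref{sec:algorithm:prop:PALF}(6), while the lower bound comes from the adjunction inequality for PALF's (\cite{AM}), with $c_1(X_i^{(m)})$ computed by rotation numbers via Lemma~\ref{lem:comp:winding} and Proposition~\ref{sec:PALF:Chern}. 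The role of Step~\ref{step:condition:exotic} is precisely to make this adjunction lower bound strictly increase with $|i|$: the quantities $r(C_j)$ and $Q(C_j,\tau_1)-Q(C_j,\tau_3)$ measure how the $R$-modified cycles contribute to $\langle c_1,\cdot\rangle$ and to the relevant self-intersection, and the stated inequalities (including their parity clauses) guarantee the estimate runs uniformly over the even family $\{2i\}$ and the odd family $\{2i-1\}$ separately. Since the relative genus function is a diffeomorphism invariant and $\partial X^{(m)}$ is common to all members, an invariant taking infinitely many values forces infinitely many members of each family to be pairwise non-diffeomorphic.

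The main obstacle is this last step: showing that the adjunction lower bound is genuinely monotone in $|i|$ and is realized by a diffeomorphism-invariant quantity. Concretely, one must (i) pin down the correct relative class and a surface realizing it so that $\langle c_1,\cdot\rangle$ and the self-intersection are computed exactly in terms of the rotation numbers and the data $Q(C_j,\tau_1)-Q(C_j,\tau_3)$, (ii) verify that the Step~\ref{step:condition:exotic} inequalities push this bound to infinity as $|i|\to\infty$, and (iii) ensure the boundary identification makes the bound invariant under diffeomorphism --- which is where the parity split between $X_{2i}^{(m)}$ and $X_{2i-1}^{(m)}$, and the hypothesis $r(\alpha_1)=r(\alpha_2)=\cdots=0$ recorded in Remark~\ref{sec:main:rem:step3}, are used. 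The remaining parts then follow immediately from the cited propositions.
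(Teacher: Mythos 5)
Your treatment of parts (3)--(6) and of the homeomorphism/contactomorphism assertions in (1)--(2) matches the paper: these all reduce to Proposition~\ref{sec:algorithm:prop:PALF}, with (4) completed by the genus-one page and (6) by the sub-PALF observation of Proposition~\ref{sec:algorithm:prop:PALF}(7). The gap is in the only substantive claim, the non-diffeomorphism in (1) and (2), which you sketch but do not prove, and where your proposed mechanism differs from the paper's in a way that matters. You plan to fix ``a relative homology class in $(X_i^{(m)},\partial X^{(m)})$'' via the boundary identification and estimate its minimal genus; but an abstract diffeomorphism $X_i^{(m)}\to X_j^{(m)}$ need not respect any chosen class or any chosen boundary identification, so this quantity is not a priori an invariant --- exactly the difficulty you flag as ``the main obstacle'' and leave unresolved. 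The paper's ``relative genus function'' $G_Z(Q,\boldsymbol{g})$ is not a relative-homology invariant at all: it is the minimum of $g_Z(z_0)$ over \emph{all} ordered bases $z=(z_0,\dots,z_k)$ of the absolute group $H_2(Z;\mathbb{Z})$ whose intersection matrix is $Q$ and which satisfy $g_Z(z_i)\le g_i$. This universal quantification over bases is precisely what makes the quantity a diffeomorphism invariant without any boundary identification, and it is the missing idea in your plan.

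Even granting that framework, the hard estimate (Proposition~\ref{sec:proof:prop:relative genus}) needs several ingredients you do not supply: the key submanifold $N_i^{(m')}$ with homology-sphere boundary, which splits $H_2(X_i^{(m)};\mathbb{Z})$ and provides the distinguished classes $S_i,T_i$ entering $Q_i$ and $\boldsymbol{g}$; the handle cancellations producing the $2$-chains $u_j$ and in particular the cycle $u_3$ with $P_1([u_3])=1$, which forces $\lvert P_1(w_0)\rvert=1$ for any admissible basis $w$; and, crucially, the freedom to re-choose the $R^{\pm}$-modifications (Lemma~\ref{sec:proof:lem:equality on K}) so that $\lvert\langle K,z\rangle\rvert$ becomes the \emph{sum} $\lvert i\cdot P_1(z)\cdot r(C_1(m_1))\rvert+\lvert P_2(z)\rvert+\lvert P_3(z)\rvert$ with $\lvert r(C_1(m_1))\rvert\ge 1$ --- this is where the inequalities of Step~\ref{step:condition:exotic} are actually used, not merely to make a bound ``run uniformly.'' Only then does the adjunction inequality kill $P_1(w_k)$ for $k\ge1$ and push $g_{X_i^{(m)}}(w_0)$, hence $G_{X_i^{(m)}}(Q_i,\boldsymbol{g})$, above any prescribed $\mu$ once $\lvert i\rvert$ is large. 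Your observation that $c_1$ is independent of $i$ on the handle basis is correct and is used in the paper, but by itself it does not yield the monotone diffeomorphism invariant you need.
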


\begin{remark}\label{sec:algorithm:remark:thm}
Without the conditions of $m$ in Step~\ref{step:condition:exotic}, the above theorem does not always hold. Indeed, Lemma~\ref{sec:partial:lem:condition:unchange} guarantees that each PALF $X^{(m)}_i$ is isomorphic (hence diffeomorphic) to $X^{(m)}$, if some index $j$ satisfies both $m_j=0$ and $C_j=\alpha_1$. In particular, every $X^{(0)}_i$ is isomorphic to the original $X$ for any integer $i$. 
\end{remark}

The above theorem immediately gives the main theorem. 
\begin{proof}[Proof of Theorem~\ref{sec:algorithm:mainthm}]
According to Theorem~\ref{sec:algorithm:thm:exotic_PALF}, the PALF's $X^{(m)}_i$'s give desired exotic Stein fillings, and $\textnormal{sg}(\xi^{(m)})\geq 1$. Suppose that the genus of the fiber $\Sigma$ of $X$ is one. Then we may assume that the genus of the fiber $\Sigma^{(m)}$ of each $X^{(m)}_i$ is one, if necessary by replacing $R$-modifications applied to $C_j$'s. Therefore, we may assume $\textnormal{sg}(\xi^{(m)})=1$. 
\end{proof}

We state a corollary of Theorem~\ref{sec:algorithm:thm:exotic_PALF}, which is proved in Section~\ref{sec:proof of main}. 

\begin{corollary}\label{sec:algorithm:cor:boundary sum}
Fix an $n$-tuple $m=(m_1,m_2,\dots, m_n)$ of non-negative integers satisfying the conditions in Step~\ref{step:condition:exotic}. Then, for any Stein filling $Y$ of a contact $3$-manifold, there exists a contact structure $\zeta^{(m)}$ on $\partial X^{(m)}\#\partial Y$ satisfying the following. 

$(1)$ The boundary connected sums $X_{2i}^{(m)}\natural Y$'s $(i\in \mathbb{Z})$ are pairwise homeomorphic Stein fillings of the same contact $3$-manifold $(\partial X^{(m)}\#\partial Y, \zeta^{(m)})$. Moreover, infinitely many of them are pairwise non-diffeomorphic.  

$(2)$ $X_{2i-1}^{(m)}\natural Y$'s $(i\in \mathbb{Z})$ are pairwise homeomorphic Stein fillings of the same contact $3$-manifold $(\partial X^{(m)}\#\partial Y, \zeta^{(m)})$. Moreover, infinitely many of them are pairwise non-diffeomorphic.   

$(3)$ Each $X_{i}^{(m)}\natural Y$ $(i\in \mathbb{Z})$ can be smoothly embedded into $X\natural Y\#_{j=1}^n m_j\overline{\mathbb{C}{P}^2}$.

$(4)$ $\textnormal{sg}(\zeta^{(m)})\geq 1$. Furthermore, if $Y$ admits a PALF structure with genus zero fiber surface, and the fiber $\Sigma^{(m)}$ of $X^{(m)}$ is of genus one, then we may assume $\textnormal{sg}(\zeta^{(m)})=1$. 
\end{corollary}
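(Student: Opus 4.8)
The plan is to realize every boundary connected sum $X_i^{(m)}\natural Y$ as a PALF and then transport each assertion of Theorem~\ref{sec:algorithm:thm:exotic_PALF} across the boundary sum with the fixed filling $Y$. First I would equip $Y$ with a PALF structure (every Stein filling admits one, as recalled in the introduction), say with fiber surface $F_Y$, monodromy factorization $(D_1,\dots,D_p)$, and induced boundary contact structure $\xi_Y$. Taking the boundary connected sum of fibers, I would describe $X_i^{(m)}\natural Y$ as the PALF with fiber $\Sigma^{(m)}\natural F_Y$ whose monodromy factorization is the concatenation of the factorization of $X_i^{(m)}$, with its vanishing cycles placed in the $\Sigma^{(m)}$-summand, with $(D_1,\dots,D_p)$ placed in the $F_Y$-summand. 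Since the two summands meet only along the connecting band, the two groups of Dehn twists commute and every curve stays homologically nontrivial, so this is indeed a PALF; its induced boundary open book is the boundary sum of the two boundary open books (the monodromy being the product of the two monodromies acting on their respective summands), whence $\partial(X_i^{(m)}\natural Y)=\partial X_i^{(m)}\#\partial Y$ and the induced contact structure is $\xi^{(m)}_i\#\xi_Y$. I would then define $\zeta^{(m)}$ to be the contact structure $\xi^{(m)}\#\xi_Y$ induced by the PALF $X^{(m)}\natural Y$.

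Next I would deduce the homeomorphism and contactomorphism content of (1) and (2). Boundary connected sum with a fixed $Y$ preserves homeomorphism type, since a boundary homeomorphism can be isotoped to carry the attaching ball to the attaching ball (the boundaries being connected), so Theorem~\ref{sec:algorithm:thm:exotic_PALF}(1)--(2), via Proposition~\ref{sec:algorithm:prop:PALF}(2)--(3), gives that the $X_{2i}^{(m)}\natural Y$ are pairwise homeomorphic, and likewise the odd-indexed ones. Because the boundary open books of the $X_i^{(m)}$ are all isomorphic by Proposition~\ref{sec:algorithm:prop:PALF}(4), their boundary sums with the fixed open book of $(\partial Y,\xi_Y)$ are all isomorphic; hence every $\partial(X_i^{(m)}\natural Y)$ is contactomorphic to $(\partial X^{(m)}\#\partial Y,\zeta^{(m)})$, so all of these PALF's are Stein fillings of the single contact $3$-manifold $\zeta^{(m)}$.

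For part (3) I would boundary-connect-sum the embedding of Theorem~\ref{sec:algorithm:thm:exotic_PALF}(5) with $Y$: from $X_i^{(m)}\hookrightarrow X\#_{j=1}^n m_j\overline{\mathbb{C}{P}^2}$, applying $\natural Y$ on both sides yields $X_i^{(m)}\natural Y\hookrightarrow (X\#_{j=1}^n m_j\overline{\mathbb{C}{P}^2})\natural Y=X\natural Y\#_{j=1}^n m_j\overline{\mathbb{C}{P}^2}$, the embedding near the attaching ball being standard so that the sum is compatible. For part (4), the intersection form of $X_i^{(m)}\natural Y$ is the orthogonal sum of those of $X_i^{(m)}$ and $Y$, hence indefinite by Proposition~\ref{sec:algorithm:prop:PALF}(5), which forces $\textnormal{sg}(\zeta^{(m)})\geq 1$ by the same indefiniteness argument used for $\xi^{(m)}$; moreover, if $Y$ carries a genus-zero PALF while $\Sigma^{(m)}$ has genus one, then $\Sigma^{(m)}\natural F_Y$ has genus one, so $\zeta^{(m)}$ is supported by a genus-one page and $\textnormal{sg}(\zeta^{(m)})=1$.

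The main obstacle is the non-diffeomorphism claim in (1)--(2): a diffeomorphism between two boundary sums need not respect the $\natural$-decomposition, so I cannot simply invoke the non-diffeomorphism of the $X_i^{(m)}$ themselves. Instead I would re-run the distinguishing argument of Theorem~\ref{sec:algorithm:thm:exotic_PALF} directly inside the blown-up Stein manifold $X\natural Y\#_{j=1}^n m_j\overline{\mathbb{C}{P}^2}$, which is a PALF since it is a boundary sum of the PALF's $X\#_{j=1}^n m_j\overline{\mathbb{C}{P}^2}$ and $Y$. The relevant relative homology class, together with the surfaces realizing the genus in question, can be taken to lie entirely in the $X_i^{(m)}$-summand, so the handlebody computation of the upper bound is unchanged, while the adjunction inequality of Section~\ref{sec:PALF} applied in the ambient PALF $X\natural Y\#_{j=1}^n m_j\overline{\mathbb{C}{P}^2}$ furnishes the same lower bound, since $c_1$ pairs trivially with the contribution of the $Y$-summand. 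Thus the relative genus function of $X_i^{(m)}\natural Y$ takes the same infinitely many distinct values as for $X_i^{(m)}$, distinguishing infinitely many of the even-indexed (resp.\ odd-indexed) boundary sums up to diffeomorphism. The care needed is precisely in verifying that the capping-off and the adjunction bound are insensitive to the presence of $Y$.
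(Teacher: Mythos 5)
Your construction of the PALF structure on $X_i^{(m)}\natural Y$ (fiber $\Sigma^{(m)}\natural F_Y$, concatenated monodromy factorization) agrees with the paper's, and your treatment of the boundary open book, of (3), and of (4) is essentially fine. The gap is in the one step you yourself flag as the main obstacle. You propose to re-run the distinguishing argument ``inside the blown-up Stein manifold $X\natural Y\#_{j=1}^n m_j\overline{\mathbb{C}{P}^2}$'' and to apply the adjunction inequality of Section~\ref{sec:PALF} there. This cannot work: a blow-up contains a smoothly embedded sphere of square $-1$, so it admits no PALF/Stein structure and violates that adjunction inequality outright; more fundamentally, the blow-up is a single fixed manifold containing all of the $X_i^{(m)}\natural Y$, so any genus bound computed in it is independent of $i$ and cannot distinguish the fillings. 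The $i$-dependence in Proposition~\ref{sec:proof:prop:relative genus} comes from applying adjunction in $X_i^{(m)}$ itself, with the first Chern class of the Stein structure that varies with $i$ through the twisted vanishing cycles. Your auxiliary claim that the minimal-genus surfaces ``can be taken to lie entirely in the $X_i^{(m)}$-summand'' is also precisely what cannot be assumed; that difficulty is the reason the relative genus function is defined using a full ordered basis of $H_2$ rather than a single class.

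The fix is cheaper than re-running the argument. Observe that $Z:=X\natural Y$ is itself a PALF with fiber the boundary sum of the fiber of $X$ with $F_Y$ and with monodromy factorization $(\gamma_{1},\beta,\gamma_{-1},C_1,\dots,C_n,D_1,\dots,D_k)$, so $Z$ satisfies the hypotheses of Subsection~\ref{subsec:S}. Applying Step~\ref{step:modified PALF} to $Z$ with the $(n+k)$-tuple $l=(m_1,\dots,m_n,0,\dots,0)$ produces PALF's $Z_i^{(l)}$ diffeomorphic to $X_i^{(m)}\natural Y$, and $l$ satisfies the inequalities of Step~\ref{step:condition:exotic} for $Z$ because each $D_j$ lies in the complement of $\mathbb{S}$, hence $Q(D_j,\tau_1)-Q(D_j,\tau_3)=0$ and $j\notin J$ (Remark~\ref{sec:main:rem:step3}). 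Theorem~\ref{sec:algorithm:thm:exotic_PALF} then applies verbatim to $Z$ and yields all four claims, including the non-diffeomorphism statements; its proof already accommodates the extra homology of $Y$ because the complement of the nucleus $N_i^{(m')}$ enters only through the tuple $\boldsymbol{g}$ of minimal genera.
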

This corollary says that, for any Stein filling $Y$, infinitely many of $X_{i}^{(m)}$'s remain pairwise exotic after the boundary connected sum with $Y$. 
\section{Partial twists and logarithmic transformations}\label{sec:partial}
In this section, we realize logarithmic transformations as simple monodromy substitutions. During the preparation of this paper, it turned out that our substitutions are special cases of Auroux's partial twists, though our substitutions (relations) have not been found. Therefore, we also review Auroux's partial twists and point out differences from ours. We refer to \cite{FaM} for basic relations in mapping class groups. 

\subsection{Monodromy substitution and surgery}\label{monodromy:subsection:surgery}We here recall basics of monodromy substitutions. Let $Z$ be a positive Lefschetz fibration over $D^2$ (or $S^2$) with fiber $F$ whose monodromy factorization is 
\begin{equation*}
(C_1,C_2,\dots,C_n), 
\end{equation*}
where $F$ is a compact connected oriented (possibly closed) surface, and $C_1, C_2,\dots, C_n$ are simple closed curves in $F$. Suppose that curves $D_1, D_2,\dots, D_k$ in $F$ satisfies the relation 
\begin{equation*}
(C_1, C_2,\dots,C_l)=(D_1, D_2,\dots, D_k)
\end{equation*}
in $\textnormal{Aut}(F,\partial F)$ for some $1\leq l\leq n$. Let $Z'$ be the positive Lefschetz fibration over $D^2$ with fiber $F$ whose monodromy factorization is 
\begin{equation*}
(D_1, D_2, \dots, D_k, C_{l+1},C_{l+2},\dots,C_{n}). 
\end{equation*}
We say that $Z'$ is obtained  by applying a monodromy substitution to $Z$ (cf.\ \cite{EMVHM}). 

Assume further that $C_1, C_2,\dots,C_l$ and $D_1,D_2,\dots,D_k$ are contained in the same subsurface $F'$ of $F$, and that the relation $(C_1,\dots,C_l)=(D_1, \dots, D_k)$ also holds in $\textnormal{Aut}(F',\partial F')$. Let $L$ and $L'$ denote the positive Lefschetz fibrations over $D^2$ with fiber $F'$ whose monodromy factorizations are 
\begin{equation*}
(C_1, C_2, \dots, C_l) \quad \text{and} \quad (D_1, D_2, \dots, D_k), 
\end{equation*}
respectively. Then $Z'$ is obtained from $Z$ by removing the submanifold $L$ and gluing $L'$ via the obvious identification of the induced open books on the boundary of $L$ and $L'$. The above monodromy substitution hence corresponds to a surgery of the 4-manifold $Z$. For examples of such correspondences, see \cite{EnG} and \cite{EMVHM}. 
\subsection{A PALF structure on $T^2\times D^2$}
In the rest of this section, we study logarithmic transformations from the viewpoint of PALF's. Here we 
 construct a PALF structure on $T^2\times D^2$. 

Van Horn-Morris~\cite{VHM} and subsequently Etg\"{u}~\cite{Etg} constructed a genus one open book of the Stein fillable contact structure on the 3-torus $T^3$. Note that $T^3$ has a unique Stein fillable contact structure up to contactomorphism (\cite{E3}). 
\begin{definition}
Let $\Phi$ be the element of $\textnormal{Aut}(\mathbb{S},\partial \mathbb{S})$ defined by 
\begin{equation*}
\Phi=t_{\alpha_3}^{-3}\circ t_{\alpha_2}^{-3}\circ t_{\alpha_1}^{-3}\circ t_{\delta_3}\circ t_{\delta_2}\circ t_{\delta_1}.
\end{equation*}
\end{definition}
\begin{theorem}[Van Horn-Morris~\cite{VHM} and Etg\"{u}~\cite{Etg}]
The open book $(\mathbb{S},\Phi)$ is compatible with the unique Stein fillable contact structure on $T^3$. 
\end{theorem}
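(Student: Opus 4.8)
The plan is to argue in two stages: first identify the closed oriented $3$-manifold $M_{(\mathbb{S},\Phi)}$ as $T^3$, and then identify the compatible contact structure as a Stein fillable one. Since $T^3$ carries a \emph{unique} Stein fillable contact structure up to contactomorphism (\cite{E3}), establishing both the diffeomorphism type and fillability will suffice to conclude that $(\mathbb{S},\Phi)$ is compatible with that structure.

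For the topological identification I would pass from the open book $(\mathbb{S},\Phi)$ to a surgery presentation in the standard way (\cite{GS}): the piece $\mathbb{S}\times S^1$ contributes a connected sum of copies of $S^1\times S^2$, and each Dehn twist in the factorization of $\Phi$ contributes one framed knot. Rather than simplifying this link by Kirby calculus, it is cleaner to compute the fundamental group directly from the mapping-torus presentation $\pi_1(\mathbb{S})\rtimes_{\Phi_\ast}\mathbb{Z}$ together with the three relations imposed by the solid tori glued along $\partial\mathbb{S}$, and to verify that the resulting group is $\mathbb{Z}^3$. Here the boundary-parallel twists $t_{\delta_i}$ act trivially on $H_1(\mathbb{S})$, while the triple $t_{\alpha_3}^{-3}\circ t_{\alpha_2}^{-3}\circ t_{\alpha_1}^{-3}$ forces enough relations that the group abelianizes to $\mathbb{Z}^3$. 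Since $T^3$ is the only closed oriented $3$-manifold with fundamental group $\mathbb{Z}^3$, this gives $M_{(\mathbb{S},\Phi)}\cong T^3$.

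For the contact identification the difficulty is that $\Phi$ contains the negative twists $t_{\alpha_i}^{-3}$, so $(\mathbb{S},\Phi)$ is \emph{not} the boundary open book of a PALF, and the construction of Subsection~\ref{sec:Lef:subsec:Stein} does not apply; fillability is therefore not automatic and must be argued separately. To handle this I would use the star relation on the genus-one surface with three boundary components (\cite{FaM}),
\begin{equation*}
t_{\delta_1}\circ t_{\delta_2}\circ t_{\delta_3}=(t_{\alpha_1}\circ t_{\alpha_2}\circ t_{\alpha_3}\circ t_{\beta})^{3},
\end{equation*}
to rewrite $\Phi$ in a form from which a compatible Weinstein structure on the candidate filling $T^2\times D^2$ can be read off. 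Substituting this central product for $t_{\delta_3}\circ t_{\delta_2}\circ t_{\delta_1}$ exhibits $(\mathbb{S},\Phi)$ as an open book obtained by capping off the trivial $T^2$-bundle structure underlying $T^2\times D^2$, so that the supported contact structure is the boundary contact structure of the Weinstein domain $T^2\times D^2$ and is thus Stein fillable; by the uniqueness in \cite{E3} it is then \emph{the} Stein fillable contact structure on $T^3$.

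The main obstacle is precisely this last stage: controlling the contact structure, and not merely the underlying manifold, in the presence of negative Dehn twists. The homological bookkeeping of the first stage is routine, but matching $(\mathbb{S},\Phi)$ with the Weinstein filling $T^2\times D^2$ — equivalently, verifying the star relation in the configuration of Figure~\ref{torus_3-holes_v2} and tracking the supported contact structure through the rewriting of $\Phi$ — is where the genuine work lies. As cross-checks on the final identification one can compare with the explicit derivations of Van Horn-Morris~\cite{VHM} and Etg\"{u}~\cite{Etg}.
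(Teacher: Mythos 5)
First, note that the paper does not prove this statement: it is quoted from \cite{VHM} and \cite{Etg}, and the paper's own work begins afterwards, with Lemma~\ref{lem: monodromy of T} (the star-relation rewriting of $\Phi$ as the positive word $(\gamma_{1},\beta,\gamma_{-1})$) and Proposition~\ref{prop:identification of boundary T} (an explicit Kirby-calculus identification of the resulting PALF $\mathbb{T}$ with $T^2\times D^2$). Your second stage is essentially Lemma~\ref{lem: monodromy of T}: since the $t_{\delta_i}$ are central, substituting $t_{\delta_3}\circ t_{\delta_2}\circ t_{\delta_1}=(t_{\alpha_3}\circ t_{\alpha_2}\circ t_{\alpha_1}\circ t_{\beta})^3$ into $\Phi$ and conjugating cancels the negative twists and yields a factorization into three right-handed Dehn twists, so $(\mathbb{S},\Phi)$ is the boundary open book of a PALF and the supported contact structure is Stein fillable by the correspondence recalled in Section~\ref{sec:PALF}. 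That part of your plan is sound, and your version of the star relation agrees with the paper's because the $\alpha_i$ (and the $\delta_i$) are pairwise disjoint.

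The genuine gap is in your first stage. You propose to identify $M_{(\mathbb{S},\Phi)}$ as $T^3$ by showing $\pi_1\cong\mathbb{Z}^3$, but the only evidence you offer is that the group \emph{abelianizes} to $\mathbb{Z}^3$: you track the action of $\Phi$ on $H_1(\mathbb{S})$, not on $\pi_1(\mathbb{S})\cong F_4$. Knowing $H_1(M)\cong\mathbb{Z}^3$ does not determine $M$ (zero-surgery on any three-component link with vanishing linking matrix has the same $H_1$, and is generically not $T^3$), so this step does not close as written. To invoke the fact that $T^3$ is the only closed oriented $3$-manifold with fundamental group $\mathbb{Z}^3$, you must carry out the full non-abelian computation of $\pi_1(\mathbb{S})\rtimes_{\Phi_*}\mathbb{Z}$ modulo the solid-torus relations, which requires the action of $\Phi_*$ on the rank-four free group and is not routine homological bookkeeping. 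The cleaner repair --- and the route the paper actually takes --- is to make stage one a consequence of stage two: once you have the positive factorization, draw the handlebody of the associated PALF $\mathbb{T}$ and reduce it by the slides and cancellations of Proposition~\ref{prop:identification of boundary T} to the standard diagram of $T^2\times D^2$; this gives $\partial\mathbb{T}\cong T^3$ and the Stein filling simultaneously, after which Eliashberg's uniqueness \cite{E3} finishes the argument exactly as you say.
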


Similarly to the proofs of this theorem in \cite{VHM} and \cite{Etg}, we factorize $\Phi$ into a product of right handed Dehn twists. We moreover give infinitely many factorizations. 

\begin{lemma}\label{lem: monodromy of T}$(1)$ The relation below holds in $\textnormal{Aut}(\mathbb{S},\partial \mathbb{S})$. 
\begin{equation*}
\Phi=(\gamma_{1},\, {\beta},\, {\gamma_{-1}}).
\end{equation*}

$(2)$ For any integers $a_1,a_2,a_3,d_1,d_2,d_3$, the relation 
\begin{equation*}
\Phi=(W(\gamma_{1}),\, W({\beta}),\, W({\gamma_{-1}}))
\end{equation*}
holds in $\textnormal{Aut}(\mathbb{S},\partial \mathbb{S})$, where $W$ is the element of $\textnormal{Aut}(\mathbb{S},\partial \mathbb{S})$ defined by 
\begin{equation*}
W=t_{\alpha_3}^{a_3}\circ t_{\alpha_2}^{a_2}\circ t_{\alpha_1}^{a_1}\circ t_{\delta_3}^{d_3}\circ t_{\delta_2}^{d_2}\circ t_{\delta_1}^{d_1}. 
\end{equation*}
\end{lemma}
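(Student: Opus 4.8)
The plan is to reduce both parts to a single relation of star type and then read everything off from the conjugation formula $t_{\psi(C)}=\psi\circ t_C\circ\psi^{-1}$. Throughout I abbreviate $T=t_{\alpha_3}\circ t_{\alpha_2}\circ t_{\alpha_1}$, so that $\gamma_{1}=T(\beta)$ and $\gamma_{-1}=T^{-1}(\beta)$ by definition. I first record two structural facts visible in Figure~\ref{torus_3-holes_v2}: the curves $\alpha_1,\alpha_2,\alpha_3$ are pairwise disjoint, so $t_{\alpha_1},t_{\alpha_2},t_{\alpha_3}$ commute and $t_{\alpha_3}^{-3}\circ t_{\alpha_2}^{-3}\circ t_{\alpha_1}^{-3}=T^{-3}$; and $\delta_1,\delta_2,\delta_3$ are boundary parallel, so $t_{\delta_1},t_{\delta_2},t_{\delta_3}$ are central in $\textnormal{Aut}(\mathbb{S},\partial\mathbb{S})$. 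Writing $Z=t_{\delta_3}\circ t_{\delta_2}\circ t_{\delta_1}$, we thus have $\Phi=Z\circ T^{-3}$.

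For $(1)$ I would first rewrite the right-hand side using conjugation: since $t_{\gamma_{\pm1}}=T^{\pm1}\circ t_\beta\circ T^{\mp1}$, a direct telescoping gives
\[
(\gamma_{1},\beta,\gamma_{-1})=t_{\gamma_{-1}}\circ t_\beta\circ t_{\gamma_{1}}=T^{-1}\circ(t_\beta\circ T)^{3}\circ T^{-2}.
\]
The whole lemma then hinges on the single identity
\[
(t_\beta\circ T)^{3}=(t_\beta\circ t_{\alpha_3}\circ t_{\alpha_2}\circ t_{\alpha_1})^{3}=Z,
\]
which is exactly the star relation for the star-shaped configuration formed by the central curve $\beta$, the three disjoint curves $\alpha_1,\alpha_2,\alpha_3$ each meeting $\beta$ once (note $Q(\alpha_i,\beta)=1$), and the three boundary curves $\delta_1,\delta_2,\delta_3$ of $\mathbb{S}$. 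Granting this identity and using that $Z$ is central, the first display collapses to $T^{-1}\circ Z\circ T^{-2}=Z\circ T^{-3}=\Phi$, which proves $(1)$.

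The main obstacle is establishing the star relation itself. I expect to do this in one of two ways: either invoke Gervais's star relation directly for the genus-one three-holed piece, or derive it from scratch in the spirit of the factorizations of Van Horn-Morris and Etg\"{u}, combining the chain relation on the one-holed torus spanned by $\beta$ and a single $\alpha_i$ with the lantern relation that expresses the three boundary twists $t_{\delta_i}$ through interior twists. Some care is needed to match orientations and to confirm that cyclically re-ordering $(t_\beta\circ T)^{3}$ is harmless; but since the target $Z$ is central, every conjugate of $(t_\beta\circ T)^{3}$ equals $Z$ as well, so whether $t_\beta$ or the $t_{\alpha_i}$ come first is immaterial.

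Finally, $(2)$ follows formally from $(1)$. Applying the conjugation formula simultaneously to all three curves gives
\[
(W(\gamma_{1}),W(\beta),W(\gamma_{-1}))=W\circ\bigl(t_{\gamma_{-1}}\circ t_\beta\circ t_{\gamma_{1}}\bigr)\circ W^{-1}=W\circ\Phi\circ W^{-1},
\]
so it suffices to show $W$ commutes with $\Phi$. But both $W$ and $\Phi$ lie in the abelian subgroup of $\textnormal{Aut}(\mathbb{S},\partial\mathbb{S})$ generated by the pairwise commuting twists $t_{\alpha_1},t_{\alpha_2},t_{\alpha_3}$ together with the central boundary twists $t_{\delta_1},t_{\delta_2},t_{\delta_3}$; hence $W\circ\Phi\circ W^{-1}=\Phi$, establishing $(2)$.
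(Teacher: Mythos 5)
Your argument is correct and is essentially the paper's own proof: both parts rest on Gervais's star relation $t_{\delta_3}\circ t_{\delta_2}\circ t_{\delta_1}=(t_{\alpha_3}\circ t_{\alpha_2}\circ t_{\alpha_1}\circ t_{\beta})^3$ (which the paper simply cites, as your first option proposes), combined with the centrality of the boundary twists and the conjugation formula, and your part $(2)$ is word-for-word the paper's argument via $W\circ\Phi=\Phi\circ W$. The only cosmetic difference is that you telescope $(\gamma_1,\beta,\gamma_{-1})$ into $T^{-1}\circ(t_\beta\circ T)^3\circ T^{-2}$ and compare with $\Phi=Z\circ T^{-3}$, whereas the paper expands $\Phi=V^{-2}\circ Z\circ V^{-1}$ and substitutes the star relation directly; your handling of the cyclic reordering via centrality of $Z$ is correct.
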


\begin{proof}

$(1)$ Let us recall the following relation in $\text{Aut}(\mathbb{S},\partial \mathbb{S})$, which is called the star relation in \cite{Ger}. 
\begin{equation*}
t_{\delta_3}\circ t_{\delta_2}\circ t_{\delta_1}=(t_{\alpha_3}\circ t_{\alpha_2}\circ t_{\alpha_1}\circ t_{\beta})^3.
\end{equation*}
For a simple closed curve $C$ in a compact oriented surface and an element $\psi$ of the mapping class group of the surface, it is well-known that the relation $\psi \circ t_{C}\circ \psi^{-1}=t_{\psi(C)}$ holds. Note also the relation $t_{C_1}\circ t_{C_2}=t_{C_2}\circ t_{C_1}$ for simple closed curves $C_1$ and $C_2$ which are disjoint to each other. We put $V=t_{\alpha_3}\circ t_{\alpha_2}\circ t_{\alpha_1}$. Using these relations, we obtain the following relations in $\text{Aut}(\mathbb{S},\partial \mathbb{S})$, which give the claim $(1)$. 
\begin{align*}
\Phi &= V^{-2}\circ t_{\delta_3}\circ t_{\delta_2}\circ t_{\delta_1}\circ V^{-1}\\
 &= V^{-2}\circ (t_{\alpha_3}\circ t_{\alpha_2}\circ t_{\alpha_1}\circ t_{\beta})^3\circ V^{-1}\\
 &= (V^{-1}\circ t_\beta\circ V)\circ t_{\beta}\circ (V\circ t_\beta \circ V^{-1})\\
 &=t_{\gamma_{-1}}\circ t_{\beta} \circ t_{\gamma_{1}}.
\end{align*}

$(2)$ The definitions of $W$ and $\Phi$ imply $\Phi\circ W=W\circ \Phi$. Thus we obtain the following relations in $\text{Aut}(\mathbb{S},\partial \mathbb{S})$, which give the claim $(2)$. 
\begin{align*}
\Phi &= W\circ \Phi\circ W^{-1}\\
&=(W\circ t_{\gamma_{-1}}\circ W^{-1})\circ (W\circ t_{\beta} \circ W^{-1})\circ (W\circ t_{\gamma_{1}}\circ W^{-1})\\
&=t_{W(\gamma_{-1})}\circ t_{W(\beta)} \circ t_{W(\gamma_{1})}. 
\end{align*}
\end{proof}

The above factorizations give a PALF structure on $T^2\times D^2$. 
\begin{definition}Let $\mathbb{T}$ be the PALF with fiber $\mathbb{S}$ whose monodromy factorization is $(\gamma_{1},\, {\beta},\, \gamma_{-1})$.
\end{definition}
Since $T^3$ has a unique Stein filling $T^2\times D^2$ up to diffeomorphism (\cite{We}), $\mathbb{T}$ is diffeomorphic to $T^2\times D^2$. 
Here we directly construct a diffeomorphism using handlebody pictures. Moreover, we find curves in pages of the induced open book on $\partial \mathbb{T}$ which corresponds to the standard basis of $H_1(T^2\times \partial D^2;\mathbb{Z})$.

\begin{proposition}\label{prop:identification of boundary T} Regard the oriented curves $\gamma_{-1},\alpha_1,\alpha_2, \alpha_3$ in $\mathbb{S}$ as oriented knots in pairwise distinct pages of the induced open book $(\mathbb{S},\Phi)$ on $\partial \mathbb{T}$, where the order of the pages are $\gamma_{-1}<\alpha_1<\alpha_2<\alpha_3$. Then the following hold.

$(1)$ There exists an orientation preserving diffeomorphism $\mathbb{T}\to T^2\times D^2(=S^1\times S^1\times D^2)$ which maps the oriented knots $\gamma_{-1}, \alpha_1,\alpha_2$ in $\partial \mathbb{T}$ to the knots $\gamma_{-1}, \alpha_1,\alpha_2$ in $\partial(T^2\times D^2)$ shown in Figure~\ref{fig:knots_in_T}, respectively.  Consequently, the induced isomorphism maps the basis $[\alpha_1],[\alpha_2],[\gamma_{-1}]$ of $H_1(\partial \mathbb{T}; \mathbb{Z})$ to the standard basis of $H_1(S^1\times S^1\times \partial D^2; \mathbb{Z})$ as follows. 
\begin{align*}
[\alpha_1]&\mapsto [S^1\times\{pt.\}\times\{pt.\}], \quad [\alpha_2]\mapsto [\{pt.\}\times S^1\times\{pt.\}],
\\
[\gamma_{-1}]&\mapsto [\{pt.\}\times\{pt.\}\times \partial D^2].
\end{align*}

$(2)$ The above diffeomorphism $\mathbb{T}\to T^2\times D^2$ sends the surface framings of $\gamma_{-1}$, $\alpha_1$, $\alpha_2$ in $\partial \mathbb{T}$ to the Seifert framings $-1, 0, 0$ in $\partial(T^2\times D^2)$ defined by the diagram in Figure~\ref{fig:knots_in_T}, respectively. 

$(3)$ $[\alpha_1]+[\alpha_2]+[\alpha_3]=0$ in $H_1(\partial \mathbb{T};\mathbb{Z})$.
\end{proposition}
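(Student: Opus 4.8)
The plan is to build an explicit Kirby diagram for the PALF $\mathbb{T}$ from its handle decomposition and to simplify it by handle slides and cancellations until it becomes the standard diagram of $T^2\times D^2$, carrying the three distinguished knots along for the ride. Using the handle recipe of Subsection~\ref{subsec:Chern} together with the decomposition $\widehat{\mathbb{S}}$ of Subsection~\ref{subsec:handlebody of S}, I would first draw $\mathbb{S}\times D^2$ as the boundary sum $\natural_4(S^1\times D^3)$ (one $0$-handle and the four $1$-handles dual to $\tau_1,\tau_2,\tau_3,\tau_\beta$), and then attach three $2$-handles along the vanishing cycles $\gamma_{1},\beta,\gamma_{-1}$ in distinct pages, each with framing $-1$ relative to the page framing. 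In the same picture I would record the oriented knots $\gamma_{-1},\alpha_1,\alpha_2,\alpha_3$ sitting in their respective pages, so that their images can be tracked throughout.

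The core of the argument is a Kirby-calculus reduction. An Euler-characteristic count ($1-4+3=0=\chi(T^2\times D^2)$) shows that the passage to the standard diagram of $T^2\times D^2$ (one $0$-handle, two $1$-handles, and a single $0$-framed $2$-handle along the commutator of the two $1$-handle cores) must be realized by cancelling exactly two $1$-/$2$-handle pairs. Concretely I would use two of the three vanishing-cycle handles---after suitable handle slides exploiting $\gamma_{\pm1}=V^{\mp1}(\beta)$ with $V=t_{\alpha_3}\circ t_{\alpha_2}\circ t_{\alpha_1}$ as in Lemma~\ref{lem: monodromy of T}---to cancel two of the four $1$-handles, leaving a diagram I then recognize as $T^2\times D^2$. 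During this reduction I would track the images of $\gamma_{-1},\alpha_1,\alpha_2$, verifying that they land on the three knots of Figure~\ref{fig:knots_in_T}; reading off their homology classes gives the stated isomorphism on $H_1$, proving $(1)$. Part $(2)$ then follows by comparing, in the final diagram, the page (surface) framing of each knot with the Seifert framing specified by the figure: the $0$'s for $\alpha_1,\alpha_2$ come from their lying on the torus page $T^2\times\{pt.\}$, while the $-1$ for $\gamma_{-1}$ is computed from the $-1$ page framing of its vanishing-cycle handle, with rotation and winding contributions handled via Lemma~\ref{lem:comp:winding}.

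For $(3)$ I would read the class $[\alpha_3]$ directly off the final diagram, checking that it equals $-[\alpha_1]-[\alpha_2]$; alternatively this is forced homologically, since in $H_1(\mathbb{S})$ the parallel longitudes satisfy $[\alpha_1]+[\alpha_2]+[\alpha_3]=[\delta_1]+[\delta_2]+[\delta_3]$ while the boundary-parallel classes $[\delta_i]$ vanish in $H_1(\partial\mathbb{T})=H_1(T^3)$. As a global sanity check, the fact that $T^3$ admits a unique Stein filling $T^2\times D^2$ up to diffeomorphism guarantees that such a diffeomorphism exists at all; the real content here is the explicit identification of the knots, their homology classes, and their framings.

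I expect the main obstacle to be the Kirby-calculus bookkeeping: drawing the non-trivial curves $\gamma_{\pm1}=V^{\mp1}(\beta)$ accurately in the handlebody, choosing handle slides that simultaneously cancel the two $1$-handles and leave the surviving $2$-handle in commutator form, and---most delicately---keeping exact track of framings through the slides so that the final framings emerge as $-1,0,0$ rather than some slide-shifted values. The orientation conventions (so that $Q(\alpha_1,\beta)=1$ holds and the induced $H_1$-isomorphism takes the stated form) must be maintained consistently throughout.
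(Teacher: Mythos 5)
Your plan for parts (1) and (2) is essentially the paper's proof: the paper draws the handlebody diagram of $\mathbb{T}$ induced from the PALF structure (Figure~\ref{fig:PALF_T_0}), slides $\widehat{\gamma}_{-1}$ and $\widehat{\gamma}_{1}$ over $\widehat{\beta}$, cancels the horizontal $1$-handle with $\widehat{\beta}$ and a vertical $1$-handle with $\widehat{\gamma}_{1}-\widehat{\beta}$, and identifies the result with Figure~\ref{fig:knots_in_T} while tracking the knots and their surface framings --- exactly the two-pair cancellation your Euler characteristic count predicts. (One small correction of emphasis: Lemma~\ref{lem:comp:winding} computes rotation numbers for $c_1$, not framings; the framings are obtained purely by following the handle slides.)

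For part (3), however, there is a problem. Your fallback ``homological'' argument rests on the identity $[\alpha_1]+[\alpha_2]+[\alpha_3]=[\delta_1]+[\delta_2]+[\delta_3]$ in $H_1(\mathbb{S};\mathbb{Z})$, and this is false: the right-hand side lies in the radical of the intersection pairing (each $\delta_i$ is boundary-parallel, and in fact the sum of all boundary classes is already zero in $H_1(\mathbb{S};\mathbb{Z})$), whereas the left-hand side has algebraic intersection $\pm 3$ with $\beta$ and is a primitive class in $H_1(\mathbb{S};\mathbb{Z})$. The vanishing of $[\alpha_1]+[\alpha_2]+[\alpha_3]$ is a genuinely $3$-dimensional fact about $\partial\mathbb{T}\cong T^3$, not a page-level identity; moreover, a boundary-parallel curve in a page of an open book is isotopic to a binding component, so it is not automatic that such classes die in $H_1$ of the total space. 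Your primary suggestion --- carrying $\alpha_3$ through the Kirby moves and reading off its class --- would work, but note that Figure~\ref{fig:knots_in_T} does not record $\alpha_3$, so this requires extra bookkeeping beyond what parts (1) and (2) provide. The paper instead avoids tracking $\alpha_3$ altogether: it shows that the knots $\gamma_{1}$ and $\gamma_{-1}$ are isotopic in $\partial\mathbb{T}$ (both become the same knot after the cancellations), and then combines $[\gamma_{\pm 1}]=[\beta]\pm([\alpha_1]+[\alpha_2]+[\alpha_3])$ with the torsion-freeness of $H_1(\partial\mathbb{T};\mathbb{Z})$ to conclude $2([\alpha_1]+[\alpha_2]+[\alpha_3])=0$ and hence the claim. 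You should either adopt that argument or commit to tracking $\alpha_3$ explicitly; the $\delta$-curve shortcut must be discarded.
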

\begin{figure}[h!]
\begin{center}
\includegraphics[width=1.4in]{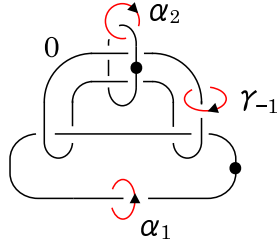}
\caption{The knots $\gamma_{-1}, \alpha_1,\alpha_2$ in $\partial(T^2\times D^2)$}
\label{fig:knots_in_T}
\end{center}
\end{figure}

\begin{proof}Figure~\ref{fig:PALF_T_0} describes the handlebody diagram of $\mathbb{T}$ induced from the PALF structure. Here $\widehat{\gamma}_{1}, \widehat{\beta}, \widehat{\gamma}_{-1}$ denote the attaching circles of the 2-handles corresponding to the vanishing cycles $\gamma_{1}, {\beta}, {\gamma}_{-1}$, respectively, and the framings of these 2-handles are $-1$ with respect to the surface framings. The oriented knots $\gamma_{-1},\alpha_1,\alpha_2$ in the figure are the oriented curves in the distinct pages of the open book on $\partial \mathbb{T}$. 

(1) and (2). Sliding the knot $\gamma_{-1}$ over the 2-handle $\widehat{\gamma}_{-1}$, we get the first picture of Figure~\ref{fig:slide_PALF_T}. We slide $\widehat{\gamma}_{-1}$ over $\widehat{\beta}$, and then slide $\widehat{\gamma}_{1}$ over $\widehat{\beta}$. The resulting diagram is the second picture of the figure, where we omit the framings. We cancel the horizontal 1-handle with the 2-handle $\widehat{\beta}$. Furthermore, we cancel the rightmost vertical 1-handle with the 2-handle $\widehat{\gamma}_{1}-\widehat{\beta}$ after sliding the 2-handle $\widehat{\gamma}_{-1}-\widehat{\beta}$ over $\widehat{\gamma}_{1}-\widehat{\beta}$ as indicated in the figure. It is easy to check that the resulting diagram coincides with the one in Figure~\ref{fig:knots_in_T} after isotopy. Thus we obtain the claim (1). One can check the claim (2) by keeping track of the surface framings. 

(3) We regard the oriented curve $\gamma_{1}$ in $\mathbb{S}$ as a knot in a page of the induced open book on $\partial \mathbb{T}$. Let us consider the diagram of $\mathbb{T}$ in Figure~\ref{fig:PALF_T_0}. Due to the open book decomposition of $\partial\mathbb{T}$, by using isotopy in $\partial \mathbb{T}$, we can change $\gamma_1$ into the surface framing of $\widehat{\gamma_1}$. Therefore, similarly to (1), we easily see that $\gamma_{1}$ is isotopic to the meridian of the attaching circle of the 2-handle $\widehat{\gamma}_1$. Then, by applying the same slidings and cancellations of handles as those in (1), we can easily check that the oriented knot $\gamma_{1}$ is isotopic to the oriented knot $\gamma_{-1}$ in Figure~\ref{fig:knots_in_T}. 
Here we note the following relations in $H_1(\mathbb{S};\mathbb{Z})\subset H_1(\partial\mathbb{T};\mathbb{Z})$.
\begin{equation*}
[\gamma_{-1}]=[\beta]-[\alpha_1]-[\alpha_2]-[\alpha_3], \quad [\gamma_{1}]=[\beta]+[\alpha_1]+[\alpha_2]+[\alpha_3]. 
\end{equation*}
Since $[\gamma_{-1}]=[\gamma_{1}]$ in $H_1(\partial\mathbb{T};\mathbb{Z})$, and $H_1(\partial\mathbb{T};\mathbb{Z})$ has no torsion, the above relations imply the claim (3). 
\end{proof}
\begin{remark}As shown in the above proof, the oriented knot $\gamma_1$ is isotopic to the oriented knot $\gamma_{-1}$ in $\partial\mathbb{T}$. 
\end{remark}
\begin{figure}[h!]
\begin{center}
\includegraphics[width=4.5in]{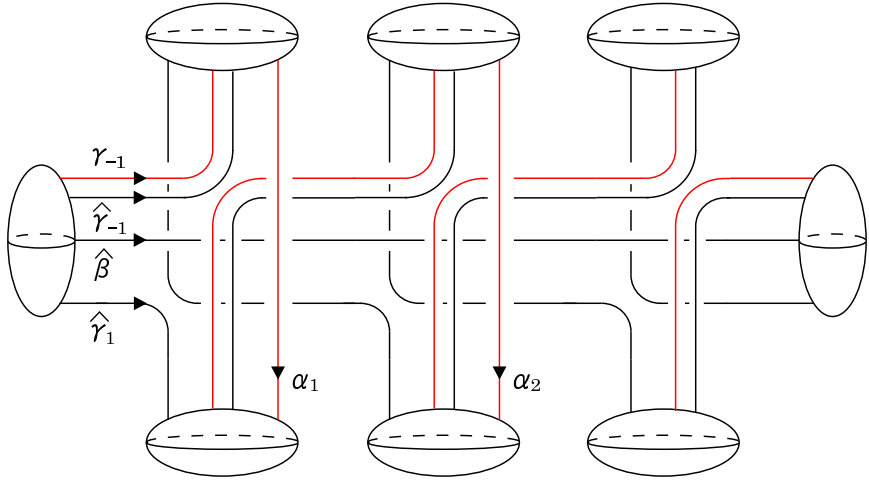}
\caption{The handlebody diagram of $\mathbb{T}$ and knots $\gamma_{-1}, \alpha_1,\alpha_2$ in $\partial\mathbb{T}$, where all framings are $-1$ with respect to the surface framings.}
\label{fig:PALF_T_0}
\end{center}
\end{figure}
\begin{figure}[h!]
\begin{center}
\includegraphics[width=4.4in]{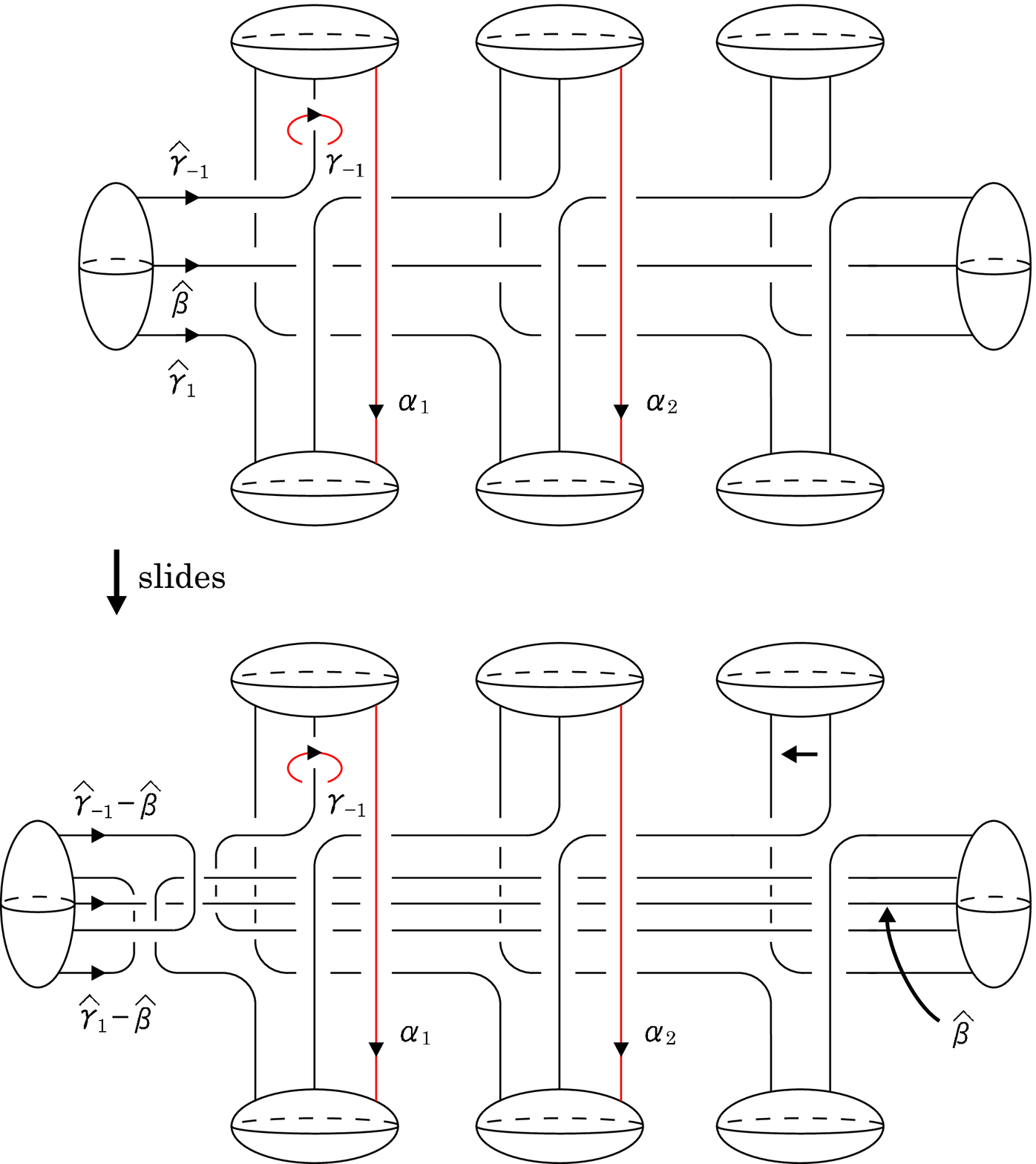}
\caption{Handle slides}
\label{fig:slide_PALF_T}
\end{center}
\end{figure}

\subsection{Monodromy substitutions and logarithmic transformations}\label{subsec:monodromy and log}We are ready to realize logarithmic transformations as monodromy substitutions. 

Let $F$ be a compact connected oriented (possibly closed) surface which contains the oriented surface $\mathbb{S}$ as a submanifold, and let $C_1,C_2,\dots,C_n$ be simple closed curves in $F$. 
We fix integers $a_1,a_2,a_3$ and define the element $W$ of $\textnormal{Aut}(F,\partial F)$ by 
\begin{equation*}
W=t_{\alpha_3}^{a_3}\circ t_{\alpha_2}^{a_2}\circ t_{\alpha_1}^{a_1}.
\end{equation*}
Let $Z$ and $Z_W$ be the positive Lefschetz fibrations over $D^2$ (or $S^2$) with fiber $F$ whose monodromy factorizations are 
\begin{equation*}
(\gamma_{1},\, {\beta},\, \gamma_{-1}, C_1,C_2,\dots,C_n) \quad \text{and} \quad (W(\gamma_{1}),\, W({\beta}),\, W({\gamma_{-1}}), C_1,C_2,\dots,C_n),
\end{equation*}
 respectively.  Lemma~\ref{lem: monodromy of T} shows that $Z_W$ is obtained by applying a monodromy substitution to $Z$. 

We prove that this monodromy substitution corresponds to a logarithmic transformation (i.e., removing a submanifold $T^2\times D^2$ and regluing $T^2\times D^2$ via a self-diffeomorphism on the boundary). 
Moreover, we describe the gluing map explicitly.  Note that $[\alpha_1], [\alpha_2], [\gamma_{-1}]$ is a basis of $H_1(\partial \mathbb{T};\mathbb{Z})$ according to Proposition~\ref{prop:identification of boundary T}, and that a self-diffeomorphism of the 3-torus $T^3$ is determined up to isotopy by its induced automorphism on $H_1(T^3;\mathbb{Z})$.

\begin{theorem}\label{thm:log:map}$Z_W$ is obtained from $Z$ by removing the submanifold $\mathbb{T}\, (\cong T^2\times D^2)$ and regluing it via the self-diffeomorphism $\varphi_W:\partial \mathbb{T}\, (\subset X) \to \partial \mathbb{T}\, (\subset X_W)$. Here $\varphi_W$ is an orientation-preserving diffeomorphism which induces the isomorphism $H_1(\partial \mathbb{T};\mathbb{Z})\to H_1(\partial \mathbb{T};\mathbb{Z})$ given by\begin{align*}
[\alpha_1] \mapsto [\alpha_1], \quad [\alpha_2]\mapsto [\alpha_2], \quad 
[\gamma_{-1}] \mapsto \; &[\gamma_{-1}]-a_1[\alpha_1]-a_2[\alpha_2]-a_3[\alpha_3]\\
=\; &[\gamma_{-1}]+(a_3-a_1)[\alpha_1]+(a_3-a_2)[\alpha_2].
\end{align*}
\end{theorem}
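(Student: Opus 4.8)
The plan is to recognize the passage from $Z$ to $Z_W$ as a \emph{surgery} in the sense of Subsection~\ref{monodromy:subsection:surgery}, and then to identify the regluing map explicitly on homology. By Lemma~\ref{lem: monodromy of T}, the relation $(\gamma_{1},\beta,\gamma_{-1})=(W(\gamma_{1}),W(\beta),W(\gamma_{-1}))$ holds in $\textnormal{Aut}(\mathbb{S},\partial\mathbb{S})$, since $W$ is supported in $\mathbb{S}\subset F$ (its defining twists are along $\alpha_1,\alpha_2,\alpha_3$). All six curves lie in the subsurface $\mathbb{S}$, so the monodromy substitution taking the factorization of $Z$ to that of $Z_W$ is exactly of the special type discussed in Subsection~\ref{monodromy:subsection:surgery}. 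Hence $Z_W$ is obtained from $Z$ by removing the sub-PALF $L$ with fiber $\mathbb{S}$ and factorization $(\gamma_{1},\beta,\gamma_{-1})$, and gluing in the sub-PALF $L'$ with fiber $\mathbb{S}$ and factorization $(W(\gamma_{1}),W(\beta),W(\gamma_{-1}))$, along the common boundary open book $(\mathbb{S},\Phi)$. By definition $L=\mathbb{T}$, and since $\mathbb{T}\cong T^2\times D^2$ this already exhibits the logarithmic-transformation structure; it remains to compute the gluing.

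First I would pin down the regluing diffeomorphism. The two sub-PALFs $L$ and $L'$ differ by a simultaneous conjugation by $W$, so there is a PALF isomorphism $\Psi:\mathbb{T}=L\to L'$ whose restriction to each fiber is $W:\mathbb{S}\to\mathbb{S}$. Since both $L$ and $L'$ are glued into the fixed complement $N=Z\setminus\operatorname{int}L=Z_W\setminus\operatorname{int}L'$ by the identity identification of the page $\mathbb{S}\subset F$, using $\Psi$ to standardize the inserted copy back to $\mathbb{T}$ shows that $Z_W$ is $Z$ with $\mathbb{T}$ removed and reglued by a self-diffeomorphism $\varphi_W$ of $\partial\mathbb{T}$ induced by $W$ on pages. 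This map is well defined on the open book $(\mathbb{S},\Phi)$ precisely because $W$ commutes with $\Phi$ (as already used in the proof of Lemma~\ref{lem: monodromy of T}(2)); I would orient the regluing so that $\varphi_W$ acts on a page curve $C$ by $[C]\mapsto[W^{-1}(C)]$, the inverse arising because the boundary of the excised $\mathbb{T}$ carries the reversed orientation.

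It then remains to evaluate $\varphi_{W*}$ on the basis $[\alpha_1],[\alpha_2],[\gamma_{-1}]$ of $H_1(\partial\mathbb{T};\mathbb{Z})$ from Proposition~\ref{prop:identification of boundary T}, all three classes being represented by page curves. Since $\alpha_1,\alpha_2,\alpha_3$ are mutually disjoint, every $t_{\alpha_j}$ fixes each $[\alpha_i]$, so $W^{-1}_*[\alpha_i]=[\alpha_i]$ and in particular $[\alpha_1],[\alpha_2]$ are fixed. For $[\gamma_{-1}]$ I would combine the relation $[\gamma_{-1}]=[\beta]-[\alpha_1]-[\alpha_2]-[\alpha_3]$ from Proposition~\ref{prop:identification of boundary T} with the Picard--Lefschetz formula $t_{C}(D)=D+Q(C,D)\,C$ and the intersection data $Q(\alpha_i,\beta)=1$ read off from Figure~\ref{torus_3-holes_v2}. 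A short computation gives $[W^{-1}(\beta)]=[\beta]-a_1[\alpha_1]-a_2[\alpha_2]-a_3[\alpha_3]$, whence
\begin{align*}
[\varphi_W(\gamma_{-1})]=[W^{-1}(\gamma_{-1})]
&=[W^{-1}(\beta)]-[\alpha_1]-[\alpha_2]-[\alpha_3]\\
&=[\gamma_{-1}]-a_1[\alpha_1]-a_2[\alpha_2]-a_3[\alpha_3].
\end{align*}
Substituting $[\alpha_3]=-[\alpha_1]-[\alpha_2]$ from Proposition~\ref{prop:identification of boundary T}(3) rewrites this as $[\gamma_{-1}]+(a_3-a_1)[\alpha_1]+(a_3-a_2)[\alpha_2]$, the asserted formula. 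Since a self-diffeomorphism of $T^3$ is determined up to isotopy by its action on $H_1$, this pins down $\varphi_W$ up to isotopy.

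The step I expect to be the main obstacle is the middle one: rigorously matching the abstract surgery regluing with the concrete open-book self-map induced by $W$, and above all getting the \emph{direction} right so that $W^{-1}$ (and hence the minus signs) appears rather than $W$. The simultaneous-conjugation isomorphism $\Psi$, the identity identification of pages, and the orientation reversal on $\partial\mathbb{T}$ all contribute competing directions, and I would track them carefully — for instance by testing the formula against the explicit handle picture of $\mathbb{T}$ in Figure~\ref{fig:PALF_T_0} — to confirm that the surgery convention yields $[\gamma_{-1}]\mapsto[\gamma_{-1}]-\sum_i a_i[\alpha_i]$ and not its inverse.
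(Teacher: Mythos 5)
Your proposal is correct and follows essentially the same route as the paper: identify the substitution as the cut-and-paste of Subsection~\ref{monodromy:subsection:surgery}, use the simultaneous-conjugation isomorphism to standardize the inserted copy $\mathbb{T}_W$ back to $\mathbb{T}$, and read off $\varphi_W$ on page curves as $C\mapsto W^{-1}(C)$ before computing on $H_1$. One small correction: the inverse appears not because $\partial\mathbb{T}$ carries a reversed orientation but simply because your $\Psi\colon L\to L'$ acts by $W$ on pages, so the standardizing map $L'\to\mathbb{T}$ is $\Psi^{-1}$, i.e.\ conjugation by $W^{-1}$ --- exactly the map $g$ in the paper's proof.
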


\begin{proof}Let $\mathbb{T}_W$ denote the PALF with fiber $\mathbb{S}$ whose monodromy factorization is $(W(\gamma_{1}),\, W({\beta}),\, W({\gamma_{-1}}))$. 
According to Subsection~\ref{monodromy:subsection:surgery}, $Z_W$ is obtained from $Z$ by removing the PALF $\mathbb{T}$ and gluing the PALF $\mathbb{T}_W$ via the diffeomorphism $f:\partial \mathbb{T}\to \partial \mathbb{T}_W$ obtained from the obvious identification of their induced open book $(S,\Phi)$.  This $f$ sends $[\alpha_1], [\alpha_2], [\gamma_{-1}]$ of $H_1(\partial \mathbb{T};\mathbb{Z})$ to the same elements of $H_1(\partial \mathbb{T}_W ;\mathbb{Z})$. 

To describe the gluing map $\varphi_W$, we apply the simultaneous conjugation to $\mathbb{T}_W$ using $W^{-1}$. This yields a diffeomorphism $g: \mathbb{T}_W\to \mathbb{T}$ which induces the isomorphism $H_1(\partial \mathbb{T}_W ;\mathbb{Z})\to H_1(\partial \mathbb{T} ;\mathbb{Z})$ given by
\begin{align*}
[\alpha_1] &\mapsto [W^{-1}(\alpha_{1})] =[\alpha_1], \qquad [\alpha_2]\mapsto [W^{-1}(\alpha_{2})] =[\alpha_2], \\
[\gamma_{-1}] &\mapsto [W^{-1}(\gamma_{-1})] =[\gamma_{-1}]-a_1[\alpha_1]-a_2[\alpha_2]-a_3[\alpha_3].
\end{align*}
Since $\varphi_W=g|_{\partial \mathbb{T}_W}\circ f$, the claim follows.
\end{proof}
\begin{remark}It follows from this theorem that the multiplicity (see \cite{GS}) of the above logarithmic transformation is one. 
\end{remark}
Having Auroux's terminology in mind (see the next subsection), we say that $Z_W$ is obtained by applying a \textit{partial $W$-twist} to $Z$ along the sub-PALF $\mathbb{T}$. When we do not specify $W$ and $\mathbb{T}$, we simply call it a \textit{partial twist}. 

We give a sufficient condition that partial twists do not change isomorphism types. 
\begin{lemma}\label{sec:partial:lem:condition:unchange} Let $\mu$ be one of the curves $\alpha_1,\alpha_2,\alpha_3$. Suppose $W=t_{\mu}^i$ for some integer $i$. If $\mu\in \{C_1,\dots, C_n\}$, then the positive Lefschetz fibration $Z_W$ is isomorphic to $Z$. In particular, $Z_W$ is diffeomorphic to $Z$.
\end{lemma}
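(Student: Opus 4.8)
The plan is to show that applying the partial $W$-twist with $W=t_\mu^i$ does nothing to the isomorphism type when $\mu$ is already one of the vanishing cycles $C_1,\dots,C_n$. The key idea is that the substitution changes only the first three vanishing cycles $(\gamma_1,\beta,\gamma_{-1})$ into $(W(\gamma_1),W(\beta),W(\gamma_{-1}))$, and I want to undo this change using the three operations that preserve the isomorphism class of a PALF (cyclic permutation, simultaneous conjugation, and elementary transformation). The freedom I gain from $\mu\in\{C_1,\dots,C_n\}$ is that $t_\mu$ appears somewhere in the factorization, and I can try to transport it via elementary transformations to where I need it.

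First I would write out the monodromy factorization of $Z_W$, namely
\begin{equation*}
(W(\gamma_1),\,W(\beta),\,W(\gamma_{-1}),\,C_1,\dots,C_n),
\end{equation*}
and recall that $W=t_\mu^i$ is a composition of Dehn twists along a single curve $\mu$. The natural approach is to use cyclic permutation to bring the factor $t_\mu$ (equivalently the vanishing cycle $\mu$, which sits at some position $C_k$) around to the front of the factorization. Concretely, by cyclically permuting I can arrange a copy of $\mu$ to sit just before $W(\gamma_1)$. Then I would use elementary transformations to slide this $\mu$ rightward past the first three entries. Each time $\mu$ passes a curve $C$, the factor $W(C)=t_\mu^i(C)$ gets replaced by $t_\mu^{\mp 1}$ applied to it; performing this $i$ times should convert each $W(\gamma_1),W(\beta),W(\gamma_{-1})$ back into the untwisted $\gamma_1,\beta,\gamma_{-1}$, since $W$ is exactly the product of these $i$ twists. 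This is the heart of the argument: a single mobile copy of $t_\mu$, pushed through the three modified cycles by elementary transformations, strips off the conjugating factor $W$ one twist at a time, exactly reversing the simultaneous conjugation by $W$ that defined the substitution.

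The hard part will be bookkeeping the elementary transformations carefully so that after pushing $\mu$ all the way through, the net effect on the remaining factorization is trivial and the factorization literally returns to $(\gamma_1,\beta,\gamma_{-1},C_1,\dots,C_n)$ up to a cyclic permutation and a simultaneous conjugation. In particular I must check that sliding $\mu$ past the three cycles and then cyclically returning it to its original slot among the $C_j$'s produces no leftover twist and no change to $C_1,\dots,C_n$. I expect the cleanest formulation is: since $\mu$ is already a vanishing cycle, I can insert a trivial cancelling pair $t_\mu t_\mu^{-1}$ is \emph{not} available (these are positive factorizations), so instead I rely on the identity that conjugation by $W$ combined with transporting the existing $t_\mu^i$ yields an overall simultaneous conjugation, which preserves the isomorphism type. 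Once that reduction is complete, Eliashberg's theorem and the PALF-to-Stein correspondence from Subsection~\ref{sec:Lef:subsec:Stein} give that isomorphic positive Lefschetz fibrations are diffeomorphic, yielding the final clause. The special case $W^{(0)}$, i.e.\ $i=0$, is immediate since then $W=\mathrm{id}$.
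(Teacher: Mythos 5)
Your overall strategy is the same as the paper's: exploit the presence of $\mu$ among the vanishing cycles and transport it through the block $(W(\gamma_1),W(\beta),W(\gamma_{-1}))$ by elementary transformations, stripping off one power of $t_\mu$ per pass. But the step you explicitly defer as ``the hard part'' --- checking that the round trip of $\mu$ leaves no residual twist --- is not bookkeeping; it is the entire mathematical content of the lemma, and your sketch does not contain the fact that makes it close. Concretely: a leftward pass of $\mu$ (using the elementary transformation $(D,\mu)\mapsto(\mu,t_\mu^{-1}(D))$) converts the block $(t_\mu^{j}(\gamma_1),t_\mu^{j}(\beta),t_\mu^{j}(\gamma_{-1}),\mu)$ into $(\mu,t_\mu^{j-1}(\gamma_1),t_\mu^{j-1}(\beta),t_\mu^{j-1}(\gamma_{-1}))$, but now $\mu$ sits on the wrong side and must be returned before the next pass. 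Pushing it back to the right with the other form of elementary transformation (the one that leaves the three curves alone and modifies $\mu$ instead) turns $\mu$ into $V(\mu)$ with $V=t_\mu^{j-1}\circ t_{\gamma_{-1}}^{-1}\circ t_\beta^{-1}\circ t_{\gamma_1}^{-1}\circ t_\mu^{-(j-1)}$. The iteration closes only because $V(\mu)=\Phi^{-1}(\mu)=\mu$, which uses Lemma~\ref{lem: monodromy of T} ($\Phi=t_{\gamma_{-1}}\circ t_\beta\circ t_{\gamma_1}$) together with the fact that $\Phi$, being a product of twists along the pairwise disjoint curves $\alpha_1,\alpha_2,\alpha_3,\delta_1,\delta_2,\delta_3$, fixes $\mu\in\{\alpha_1,\alpha_2,\alpha_3\}$. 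This is exactly where the hypothesis that $\mu$ is one of the $\alpha_k$ (and not an arbitrary curve) enters; without it the argument fails, and your proposal never invokes it.

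Two smaller issues. First, your stated direction is backwards: sliding $\mu$ \emph{rightward} past a curve $D$ while keeping $\mu$ unchanged replaces $D$ by $t_\mu(D)$, which \emph{adds} a positive twist; to strip $W=t_\mu^i$ you must move $\mu$ leftward through the block, which is why the paper arranges $C_1=\mu$ (so that $\mu$ is adjacent to the block on the right) rather than cyclically permuting it to the front. Your hedge ``$t_\mu^{\mp1}$'' papers over this, but the sign determines whether the construction converges or diverges. Second, the concluding appeal to ``an overall simultaneous conjugation'' is not what happens: a simultaneous conjugation would act on \emph{all} vanishing cycles including $C_1,\dots,C_n$, whereas the paper's reduction is carried out entirely by elementary transformations within the four-term window; and the final clause needs no Eliashberg-type input, since isomorphic Lefschetz fibrations are diffeomorphic by definition of isomorphism.
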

\begin{proof}Though this lemma easily follows from the proof of Lemma 3.2 in \cite{Auroux0}, we give a proof for completeness. 
Applying elementary transformations to $Z_W$ and $Z$, we may assume $C_1=\mu$. Using elementary transformations, we change the monodromy factorization $(t_\mu^j(\gamma_{1}),\, t_\mu^j({\beta}),\, t_\mu^j({\gamma_{-1}}), \mu)$ $(j\in \mathbb{Z})$ as follows. 
\begin{align*}
(t_\mu^j(\gamma_{1}),\, t_\mu^j({\beta}),\, t_\mu^j({\gamma_{-1}}), \mu)=  \; &(\mu, t_\mu^{j-1}(\gamma_{1}),\, t_\mu^{j-1}({\beta}),\, t_\mu^{j-1}({\gamma_{-1}}))  \\
= \; &(t_\mu^{j-1}(\gamma_{1}),\, t_\mu^{j-1}({\beta}),\, t_\mu^{j-1}({\gamma_{-1}}), V(\mu))\\
= \; &(t_\mu^{j-1}(\gamma_{1}),\, t_\mu^{j-1}({\beta}),\, t_\mu^{j-1}({\gamma_{-1}}), \mu).
\end{align*}
The second factorization is obtained by moving $\mu$ to the left most. Pushing $\mu$ to the right most, we obtain the third factorization, where 
\begin{equation*}
V=t_\mu^{j-1}\circ t_{\gamma_{-1}}^{-1}\circ t_{\beta}^{-1}\circ t_{\gamma_{1}}^{-1}\circ t_\mu^{-(j-1)}. 
\end{equation*}
We obtain the last factorization, since 
\begin{equation*}
V(\mu)=(t_\mu^{j-1}\circ \Phi^{-1}\circ t_\mu^{-(j-1)})(\mu)=\Phi^{-1}(\mu)=\mu. 
\end{equation*}
The above relation implies that the monodromy factorization of $Z_W$ is obtained from that of $Z$ by using elementary transformations. The claim thus follows. 
\end{proof}
\begin{remark}Since the multiplicity of our logarithmic transformation is one, Lemma~2.2 in \cite{G_AGT} and Theorem~\ref{thm:log:map} give an alternative proof that the above $Z_W$ is diffeomorphic to $Z$. 
\end{remark}
\subsection{Auroux's partial twists}Here we briefly review Auroux's partial twisting operation (\cite{Auroux1}, \cite{Auroux2}). Let $\alpha, C_1,C_2,\dots,C_n$  be simple closed curves in a compact oriented (possibly closed) surface $F$. Assume that $t_{C_k}\circ t_{C_{k-1}}\circ \dots \circ t_{C_1}$ preserves the curve $\alpha$ for some integer $k$. Note that the relation 
\begin{equation*}
(t_{\alpha}(C_1), t_{\alpha}(C_2),\dots, t_{\alpha}(C_k))=(C_1,C_2,\dots, C_k)
\end{equation*}
 holds in $\textnormal{Aut}(F, \partial F)$. 
Let $Z$ and $Z_\alpha$ be the positive Lefschetz fibrations over $S^2$ (or $D^2$) with fiber $F$ whose monodromy factorizations are 
\begin{equation*}
(C_1, C_2,\dots, C_n) \quad \text{and} \quad (t_{\alpha}(C_1), t_{\alpha}(C_{2}),\dots, t_{\alpha}(C_k), C_{k+1}, \dots, C_{n}), 
\end{equation*}
respectively. 

The monodromy of $Z_\alpha$ is clearly obtained from that of $Z$ by taking a partial conjugation with $t_\alpha$. For this reason, Auroux said that $Z_\alpha$ is obtained by applying a partial twisting operation to $Z$. Our partial twists introduced in Subsection~\ref{subsec:monodromy and log} can be obtained by repeating Auroux's partial twists, since $\Phi$ preserves the curves $\alpha_1, \alpha_2, \alpha_3$. 
Auroux also proved that $Z_\alpha$ is a logarithmic transformation of $Z$ along a certain torus $T_{\alpha}$ (more strongly, a Luttinger surgery with the direction $\alpha$ in the case where $Z$ is closed) using a different argument. His torus $T_{\alpha}$ depends on $\alpha$, and it is not clear to the author whether our torus $T^2\times \{0\}\subset T^2\times D^2 \cong \mathbb{T}$ is isotopic to $T_{\alpha}$. 


We would like to emphasize the following differences of our realization of logarithmic transformations from Auroux's realization: (1) our relations for partial twists are concrete and simple; (2) our torus is independent of the direction $\alpha$; (3) we obtained the curves in the fiber which is a basis in $H_1(\partial \mathbb{T};\mathbb{Z})$. Indeed, these curves play important roles in our construction of exotic Stein fillings.

\section{$R^\pm$-modifications, Dehn twists and rotation numbers}\label{sec:rotation}
Here we study effects of $R$-modifications and Dehn twists on rotation numbers. In particular, we introduce $R^+$- and $R^-$-modifications. 
Throughout this section, let $F$ be a compact connected oriented surface with non-empty boundary. 

We first observe an effect of $R$-modifications on PALF's. Let $C_1, C_2, \cdots, C_n$ be (possibly homologically trivial) simple closed curves in the surface $F$, and let $Z$ be the positive Lefschetz fibration over $D^2$ with fiber $F$ whose monodromy factorization is 
\begin{equation*}
({C_1}, C_2, \dots, {C_n}). 
\end{equation*}

We fix an integer $i$ with $1\leq i\leq n$. Let $C_i'$ (resp.\ $F$) be the curve (resp.\ the oriented surface) obtained by applying an $R$-modification to $C_i$, and let $E_i$ denote the auxiliary curve of the $R$-modification. Let $Z'$ be the positive Lefschetz fibration over $D^2$ with fiber $F'$ whose monodromy factorization is 
\begin{equation*}
({C_1}, C_2,\dots, C_{i-1}, E_i, {C_i'}, C_{i+1}, C_{i+2}, \dots,  {C_n}). 
\end{equation*}
We consider the handlebody structures on $Z$ and $Z'$ induced from the PALF structures. We easily see the following. 

\begin{lemma}\label{sec:R:lem:diffeo:modification}
$(1)$ The handlebody $Z'$ is obtained from the handlebody $Z$ by decreasing the framing of the $2$-handle corresponding to the vanishing cycle $C_i$ by one. Consequently, the diffeomorphism type of $Z'$ does not depend on the choice of the $R$-modification applied to $C_i$. 

$(2)$ The fundamental group and the homology group of $Z'$ are isomorphic to those of $Z$.

$(3)$ $Z'$ can be smoothly embedded into $Z\#\overline{\mathbb{C}{P}^2}$. 
\end{lemma}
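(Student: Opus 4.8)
The plan is to prove all three parts from a single handle-calculus observation: that $Z'$ is $Z$ with one $2$-handle framing lowered by one. First I would write down the handlebody of $Z'$ coming from its PALF structure. Since $F'$ is obtained from $F$ by attaching a $2$-dimensional $1$-handle, the product $F'\times D^2$ is obtained from $F\times D^2$ by attaching a single $4$-dimensional $1$-handle $H$, and $Z'$ is then $F'\times D^2$ with $2$-handles along $C_1,\dots,C_{i-1},E_i,C_i',C_{i+1},\dots,C_n$. The key point is that the auxiliary curve $E_i$ meets the cocore of the new $1$-handle geometrically once, so the $2$-handle along $E_i$ runs over $H$ exactly once and $(H,\,\text{the $2$-handle along }E_i)$ is a cancelling pair.

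For (1), I would cancel this pair. The $2$-handles along $C_j$ with $j\neq i$ lie inside $F\times D^2$ and do not run over $H$, so they are untouched. The handle along $C_i'$ does run over $H$ once, through the parallel copy of $E_i$ used in the band sum defining $C_i'$; cancelling the pair slides this handle over the $E_i$-handle, which undoes the band sum and returns the attaching circle to $C_i\subset F$, while altering its framing. The delicate bookkeeping is the framing: I would track the surface framing through the band sum and the slide and check that the new framing is exactly one less than the framing of the $C_i$-handle in $Z$. As a sanity check this matches the Legendrian picture of the Remark after the definition of $R$-modification: the special $R$-modification realizing $C_i'$ is a stabilization of the Legendrian knot $C_i$, which drops the Thurston--Bennequin number, and hence the contact (i.e.\ handle) framing, by one. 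Since the outcome is always ``$Z$ with the framing of the $C_i$-handle decreased by one'', independent of which $1$-handle, which $E_i$, and which band were chosen, the diffeomorphism type of $Z'$ is independent of the $R$-modification.

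Parts (2) and (3) then follow formally from (1). For (2), I would note that $Z$ is a $4$-dimensional $2$-handlebody and that changing the framing of a single $2$-handle alters neither its attaching circle in the $1$-skeleton nor the boundary map $\partial_2$ of the handle chain complex; hence $\pi_1$ (by van Kampen) and all integral homology groups are unchanged. Alternatively one can argue directly on vanishing cycles: $\pi_1(F')\cong\pi_1(F)\ast\mathbb{Z}$ with the new free generator $e$ read off the new $1$-handle, killing $E_i$ eliminates $e$ since $E_i$ crosses the cocore once, and then $C_i'$ equals $C_i$ in the quotient because its $E_i$-part dies, recovering $\pi_1(Z)$. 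For (3), I would realize the single framing change by a blow-up: in $Z\#\overline{\mathbb{C}{P}^2}$ the exceptional sphere appears as a split $(-1)$-framed unknot, and sliding the $C_i$-handle over it lowers its framing by one while leaving its isotopy type unchanged; deleting the exceptional $2$-handle then exhibits $Z'$ as a codimension-zero submanifold of $Z\#\overline{\mathbb{C}{P}^2}$.

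The main obstacle I anticipate is the framing computation in (1): verifying that the slide decreases the framing by \emph{exactly} one (rather than raising it, or changing it by a different amount) requires careful orientation and band bookkeeping, and is the only place where the ``easily seen'' claim hides genuine content. Everything else is routine handle calculus once the cancelling pair is identified.
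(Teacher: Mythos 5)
Your proposal is correct and follows essentially the same route as the paper: cancel the new $1$-handle against the $2$-handle along $E_i$, observe that this returns the $C_i'$-handle to $C_i$ with its framing lowered by one, and deduce (2) and (3) from that single framing statement. The paper's proof is a three-line version of exactly this argument, leaving the cancellation and framing bookkeeping (which you spell out, with a correct sanity check via Legendrian stabilization) as ``immediately seen.''
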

\begin{proof}We draw the handlebody diagram of $Z'$. Using the 2-handle corresponding to the vanishing cycle $E_i$, we cancel the 1-handle associated to the $R$-modification. Then we immediately see the claim (1). The claims (2) and (3) follow from (1). 
\end{proof}

In the rest of this section, we assume that the oriented surface $F$ is equipped with a handlebody diagram and the trivialization of its tangent bundle as explained in Subsection~\ref{subsec:Chern}. Due to the above lemma, we study special $R$-modifications to calculate the rotation number of the resulting curves easily. 

Let $C$ be an oriented simple closed curve in $F$. Let $\varepsilon$ be a simple proper arc in $F$ satisfying the following conditions. 
\begin{itemize}
 \item $\varepsilon$ is contained in the 0-handle $(\subset \mathbb{R}^2)$ of $F$. 
 \item $\varepsilon$ is parallel to either the $x$- or $y$-axis of $\mathbb{R}^2$. 
 \item $\varepsilon$ does not geometrically intersect with $C$. 
\end{itemize}
Attach a 1-handle to $F$ along the two end points of $\varepsilon$, and denote the resulting surface by $F'$. Let $E$ denote the simple closed curve in $F'$ obtained by attaching the core of the new 1-handle to $\varepsilon$. Take a band connected sum of $C$ and a parallel copy of $E$ in $F'$, and denote the resulting curve in $F'$ by $C'$. We orient $C'$ and $E$ so that $C'$ preserves orientations of $C$ and $E$. Trivialize the tangent bundle of $F'$ by extending the trivialization of that of $F$. 

\begin{definition}
We say that the above operation is an \textit{$R^\pm$-modification} to $C$, that $C'$ (resp.\ $F'$) is the oriented curve (the oriented surface) obtained by applying the $R^\pm$-modification to $C$, and that the oriented simple closed curve $E$ is the \textit{auxiliary curve} of the $R^\pm$-modification. 
Furthermore, we call the above operation an \textit{$R^{+}$-modification} $($resp.\ \textit{$R^{-}$-modification}$)$ to $C$, if the rotation number of $C'$ satisfies $r(C')=r(C)+1$ $($resp.\ $r(C')=r(C)-1$$)$. 
\end{definition}
\begin{remark}\label{sec:rotation:rem:def_pm}Any $R^\pm$-modification is a special case of $R$-modifications. It immediately follows from the definition that $C'$ and $E$ are homologically non-trivial in $F$. Applying Lemma~\ref{lem:comp:winding}, we see $r(E)=0$. Furthermore, any $R^\pm$-modification is either an $R^+$- or $R^-$-modification. This can be easily seen from the example below, since, by isotopy of $C'$, we may assume that the band of $C'$ between $C$ and $E$ is local and either vertical or horizontal. 
\end{remark}

 Here we give an example of $R^{-}$- and $R^{+}$-modifications. See Figure~\ref{ex_R-modification}. The upper picture is a rectangle of the 0-handle of $F$ in $\mathbb{R}^2$. The left and right sides of the figure describe the $R^-$- and $R^+$-modifications applied to $C$, respectively. In the lower rectangles, the red regions are the attaching regions of the new 1-handles of the resulting surface $F'$. Note that each auxiliary curve $E$ contains the core of the 1-handle as a subarc. One can easily check the differences between $r(C')$ and $r(C)$ by applying Lemma~\ref{lem:comp:winding} to the $+90^\circ$ half-line. Similarly, one can check $r(E)=0$. 

\begin{figure}[h!]
\begin{center}
\includegraphics[width=3.5in]{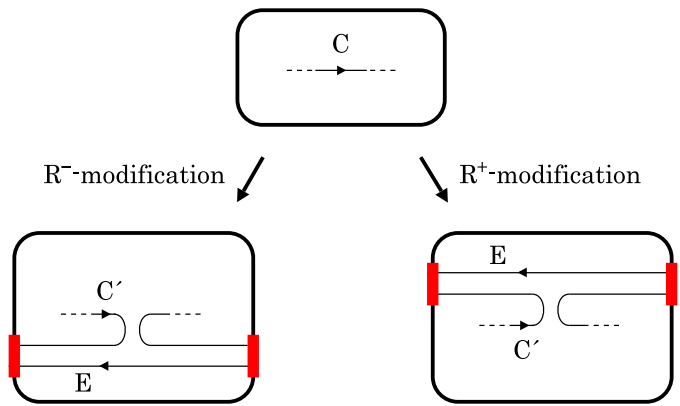}
\caption{An example of $R^-$- and $R^+$-modifications}
\label{ex_R-modification}
\end{center}
\end{figure}

We can always apply both $R^+$- and $R^{-}$-modifications to any homologically non-trivial curve in $F$, if necessary, after taking a boundary connected sum of the 0-handle of $F$ and a rectangle in $\mathbb{R}^2$. 
\begin{proposition}\label{sec:R:prop:realizable}
For any homologically non-trivial oriented simple closed curve $C$ in $F$, by taking a boundary sum of the $0$-handle in $F$ and a rectangle in $\mathbb{R}^2$ using a band in $\mathbb{R}^2$, we can enlarge the 0-handle of $F$ so that the following hold. 
\begin{itemize}
 \item The resulting  $F$ still satisfies the conditions in Subsection~\ref{subsec:Chern}. 
 \item Both $R^+$- and $R^-$-modifications can be applied to $C$ in the resulting surface $F$. Furthermore, for any simple closed curve $\alpha$ in the original $F$, the auxiliary curve $E$ of the modification does not intersect with $\alpha$. 
\end{itemize}
\end{proposition}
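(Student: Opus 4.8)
The plan is to reduce everything to the local model of Figure~\ref{ex_R-modification} by creating, next to a suitable arc of $C$, a fresh piece of surface that is disjoint from the rest of the diagram. I first note that enlarging the $0$-handle by a boundary sum with a rectangle along an axis-parallel band manifestly preserves the conditions of Subsection~\ref{subsec:Chern}: the enlarged $0$-handle is still a boundary sum of axis-parallel rectangles and no $1$-handle is altered. Hence the first assertion will be automatic, and all the work lies in arranging the two modifications.

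First I would isotope $C$ inside the $0$-handle. Since the $0$-handle is a disk, a curve lying entirely in it would be null-homologous; as $C$ is homologically non-trivial it must traverse some $1$-handle, and in particular it has an arc running through the interior of the $0$-handle. The boundary of the $0$-handle meets the finitely many $1$-handle attaching regions in finitely many arcs, so there is a free boundary arc $\ell$ disjoint from all of them. Using an isotopy supported in the $0$-handle, I would push a sub-arc of $C$ so that it becomes a straight segment $s$ running parallel and close to $\ell$. This isotopy may move $C$ across $\alpha$, but that is harmless for what follows.

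Next I would attach the new rectangle $R_0$ to the $0$-handle by a thin axis-parallel band glued along $\ell$, so that $R_0$ sits immediately beside the segment $s$. The decisive point is that $R_0$ together with this band is \emph{newly added} surface: it is disjoint from the original $F$, hence from $C$ and from the given curve $\alpha$. Consequently, if I choose the arc $\varepsilon$ to lie in $R_0$ and keep the core of the subsequent $1$-handle inside the new region, then the auxiliary curve $E=\varepsilon\cup(\text{core})$ is automatically disjoint from $C$ and from $\alpha$, so the disjointness requirements of the second assertion hold for free. The arc $\varepsilon$ can clearly be taken axis-parallel and inside the $0$-handle, as required for an $R^{\pm}$-modification.

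Finally, with $s$, $R_0$ and $\varepsilon$ in place I am exactly in the situation of Figure~\ref{ex_R-modification}. Performing the band sum of $C$ with a parallel copy of $E$ in the two mirror-symmetric ways drawn there produces, respectively, $r(C')=r(C)+1$ and $r(C')=r(C)-1$; here one uses $r(E)=0$, which follows from Lemma~\ref{lem:comp:winding} applied to a fixed half-line as in Remark~\ref{sec:rotation:rem:def_pm}. Thus both an $R^{+}$- and an $R^{-}$-modification can be applied to $C$, completing the argument. I expect the only delicate point to be confirming that both signs of the rotation-number change are realizable; but this is precisely displayed by the two cases of Figure~\ref{ex_R-modification} and checked by the half-line count of Lemma~\ref{lem:comp:winding}, so the genuinely new content is the placement argument above, where the extra rectangle supplies the room next to $C$ while keeping $E$ off $\alpha$.
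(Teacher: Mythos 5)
Your treatment of the first bullet and of the disjointness of the auxiliary curve from $\alpha$ is fine, and the reduction to the local model of Figure~\ref{ex_R-modification} is the right idea for realizing \emph{one} of the two modifications. The gap is in the claim that \emph{both} signs are then available "in the two mirror-symmetric ways drawn there." The sign of $r(C')-r(C)$ is not a free choice once the new rectangle has been placed: since the auxiliary curve $E$ (a proper arc in the $0$-handle union the core of the new $1$-handle) and its parallel copy are disjoint from $C$, they lie in a single component of $F'\setminus C$, and the band therefore approaches $C$ from the side of $C$ on which that component sits; a turning-number count via Lemma~\ref{lem:comp:winding} shows that this side alone determines whether one gets $r(C)+1$ or $r(C)-1$. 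Your construction isotopes a subarc $s$ of $C$ up to a single free boundary arc $\ell$ and attaches the rectangle there, so the new region is accessible from only the one side of $s$ facing $\ell$. In the local picture of Figure~\ref{ex_R-modification} both sides of the arc of $C$ abut the rectangle's boundary, which is why both mirror images are available there; you have not arranged that in general. Concretely, if $C$ is separating (which a homologically non-trivial curve can be, e.g.\ a boundary-parallel curve in a pair of pants), the two sides of $C$ lie in different components $H^{+}\ne H^{-}$ of $F\setminus C$, and a rectangle attached along an $\ell\subset\partial F\cap \overline{H^{+}}$ can only ever produce the modification with one fixed sign.

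What is missing is precisely the step where the hypothesis that $C$ is homologically non-trivial does real work: one must show that \emph{each} component of $F\setminus C$ contains a boundary component of $F$ (the paper's Lemma~\ref{lem:complement}), so that a free boundary arc, and hence a new rectangle and a proper arc $\varepsilon$, can be found on \emph{each} side of $C$; the $R^{+}$- and $R^{-}$-modifications are then carried out from the two sides separately. Your only use of the hypothesis is the much weaker observation that $C$ is not contained in the $0$-handle, and indeed your argument would apply verbatim to a null-homologous curve bounding a once-punctured torus in $F$ — for which the conclusion is false, since the component of $F\setminus C$ interior to that torus meets no boundary and the corresponding sign cannot be realized. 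So the proposal as written does not establish the proposition; it needs the complement lemma (or an equivalent homological argument) and a two-sided placement of the new rectangles.
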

\begin{proof}
We begin with the lemma below. 
\begin{lemma}\label{lem:complement}Any connected component of $F-C$ contains a boundary component of $F$. 
\end{lemma}
\begin{proof}[Proof of Lemma~\ref{lem:complement}] We prove this lemma by induction on the number $n$ of boundary components of $F$. The $n=1$ case. It is well-known that a homologically non-trivial curve in a surface with connected boundary is a non-separating curve (cf. Subsection 1.3.1 in \cite{FaM}). Hence the $n=1$ case holds. Assuming the $n=k$ $(k\geq 1)$ case, we prove the $n=k+1$ case. Suppose, to the contrary, that a connected component $F_0$ of $F-C$ does not contain any boundary component of $F$. We attach one 1-handle to two distinct boundary components of $F$. Then the resulting oriented surface $F'$ has $k$ boundary components, and $C$ is still homologically non-trivial in $F'$. Due to this construction, $F_0$ is a connected component of $F'-C$, and $F_0$ contains no boundary component of $F'$. On the other hand, according to the assumption of induction, $F_0$ contains a boundary component of $F'$. This is a contradiction. Therefore, the $n=k+1$ case holds. 
\end{proof}

Let $A$ be a small subarc of $C$. By using isotopy, we may assume that $A$ is located in a rectangle of the 0-handle of $F$, that $A$ is parallel to the $x$-axis, and that the induced orientation of $A$ is the positive direction of the $x$-axis. Let $H^+$ (resp.\ $H^-$) denote the connected component of $F-C$ which is positive (resp.\ negative) normal direction for $A$. Note that $H^+$ and $H^-$ may be the same component. According to Lemma~\ref{lem:complement}, both $H^+$ and $H^-$ contain a boundary component of $F$.

Since $H^+$ contains a boundary component of $F$, there exists a small segment $\varepsilon^+$ of the boundary of a rectangle in the 0-handle of $F$ such that $\varepsilon^+$ is contained in $H^+$. Without loss of generality, we may assume that $\varepsilon^+$ is parallel to the $y$-axis as in the first picture of Figure~\ref{lem_varepsilon_v2}. Along $\varepsilon^+$, we take a boundary sum of the rectangle and a rectangle in $\mathbb{R}^2$ as in the second picture. We then attach a 1-handle to the new rectangle as in the third picture, where the red region is the attaching region of the 1-handle. Let $E$ be the unoriented simple closed curve in the third picture. Clearly $E$ does not intersect with any simple closed curve in the original $F$. 

Since $\varepsilon^+$ is contained in $H^+$, and $H^+$ is path connected, we can push a small subarc of $E$ as in the first picture of Figure~\ref{E_in_H+} by using isotopy in $H^+$. Namely, the subarc of $E$ is a push-off of $A$ to its positive normal direction. We take a band connected sum of $C$ and a push-off of $E$ as in the second picture. We denote the resulting simple closed curve by $C'$, and orient $C'$ and $E$ so that $C'$ preserves the orientations of $C$ and $E$. Since $r(E)=0$, one can check $r(C')=r(C)+1$ applying Lemma~\ref{lem:comp:winding}. Therefore this operation is an $R^+$-modification to $C$. Applying the same argument to $H^-$, we can similarly obtain an $R^-$-modification to $C$. 
\end{proof}

\begin{figure}[h!]
\begin{center}
\includegraphics[width=3.9in]{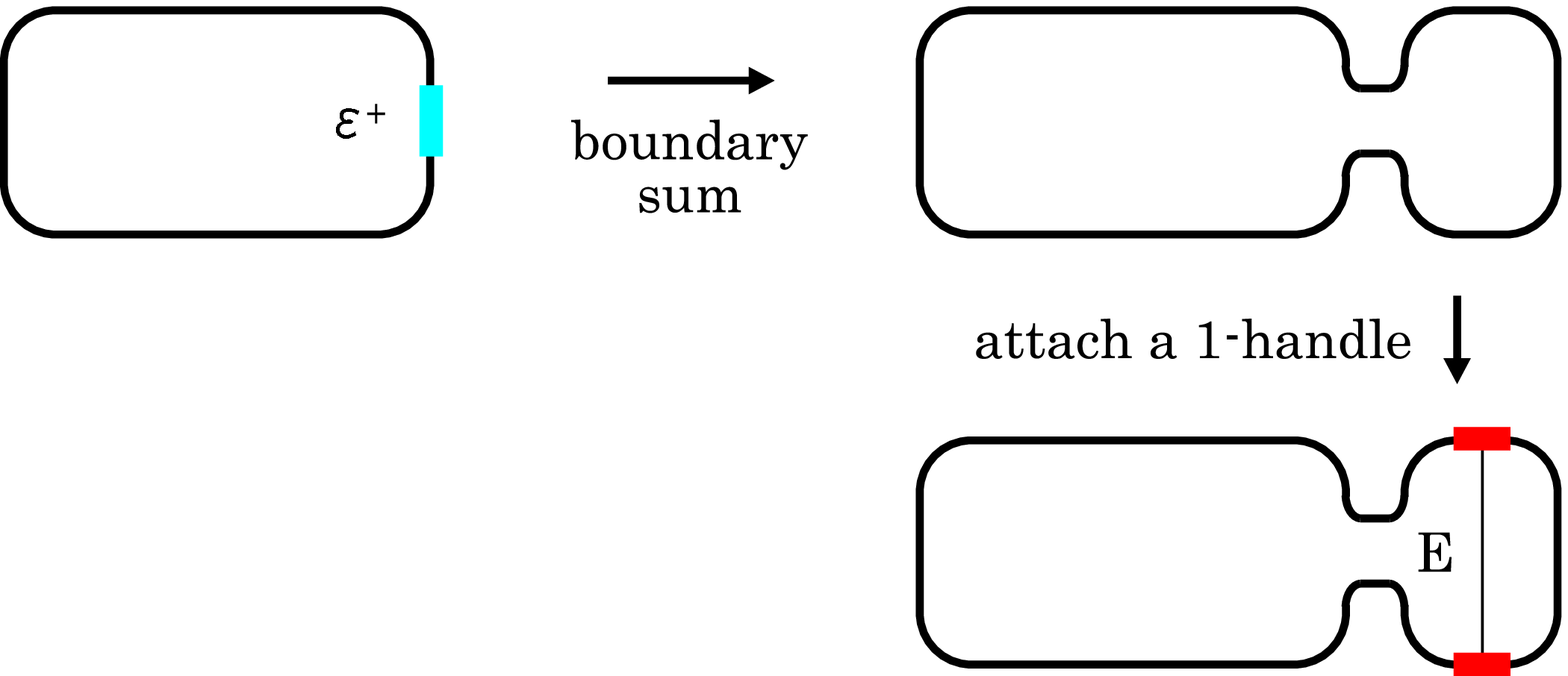}
\caption{}
\label{lem_varepsilon_v2}
\end{center}
\end{figure}

\begin{figure}[h!]
\begin{center}
\includegraphics[width=3.8in]{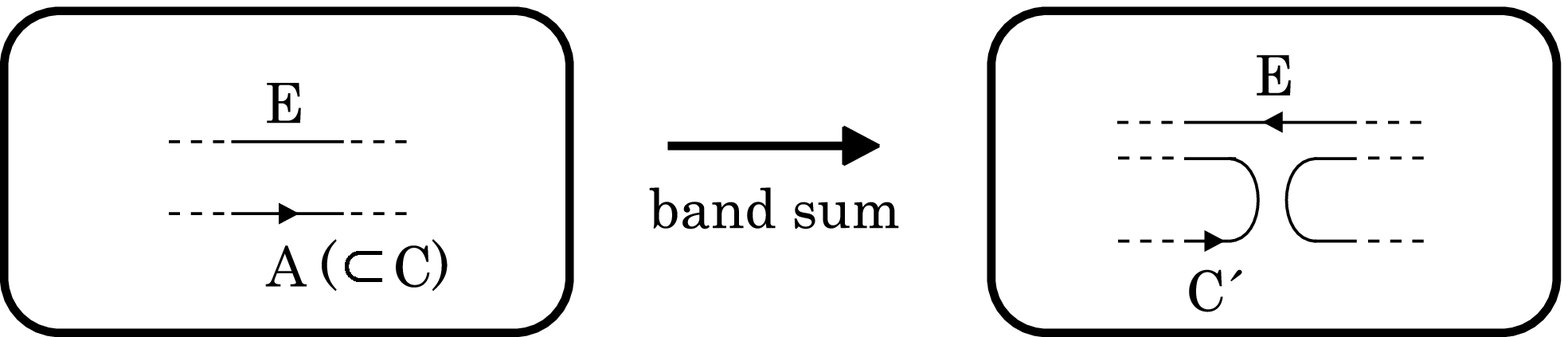}
\caption{}
\label{E_in_H+}
\end{center}
\end{figure}

Lastly we check how Dehn twists change rotation numbers. 
\begin{lemma}\label{lem:rotation:dehn}For oriented simple closed curves $C, D$ in $F$, the equalities below hold, where the orientation of $t_C^{\pm 1}(D)$ is the one induced from $D$. 
\begin{equation*}
r(t_C(D))=r(D)+Q(C,D)r(C), \quad r(t_C^{-1}(D))=r(D)-Q(C,D)r(C).
\end{equation*}
\end{lemma}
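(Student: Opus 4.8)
The plan is to reduce everything to a purely local analysis near $C$, using Lemma~\ref{lem:comp:winding} to reinterpret the rotation number as a signed count of tangencies to a generic reference direction. Recall that $r(\gamma)$ is the winding number of the unit tangent field of $\gamma$, viewed via the fixed trivialization as a loop in $\mathbb{R}^2-\{0\}$. By Lemma~\ref{lem:comp:winding}, this winding number equals the algebraic intersection number of that tangent loop with any fixed half-line; equivalently, after fixing a generic reference direction $v_0\in\mathbb{R}^2-\{0\}$, the number $r(\gamma)$ is the signed count of points of $\gamma$ at which the tangent vector points in the direction $v_0$. The whole argument then comes down to comparing these signed tangency counts for $D$ and for $t_C(D)$.

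First I would isotope $D$ so that it meets $C$ transversally and so that, inside a fixed annular neighborhood $N$ of $C$, every strand $D\cap N$ is a short arc crossing $C$ once; I would then choose $v_0$ generic, in particular not realized as the tangent direction of any of these short crossing arcs. Using the standard model of the right handed Dehn twist supported in $N$, the curves $D$ and $t_C(D)$ agree outside $N$, so their $v_0$-tangency points agree there point for point and cancel in the difference $r(t_C(D))-r(D)$. Inside $N$, each crossing strand is replaced by a strand that wraps once around $C$ parallel to $C$. The key step is to show that such a wrap-around strand contributes exactly $\varepsilon_p\, r(C)$ to the signed $v_0$-tangency count, where $\varepsilon_p=\pm1$ is the local sign of the intersection point $p\in C\cap D$: since the strand runs once parallel to $C$, its tangent field traverses the same loop in $\mathbb{R}^2-\{0\}$ as the tangent field of $C$ and hence meets $v_0$ with net signed multiplicity $r(C)$, while the direction in which the wrap is traversed (hence the global sign of this contribution) is $+1$ precisely when $p$ is a positive crossing, because the twist is right handed. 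Summing over all intersection points gives
\begin{equation*}
r(t_C(D))-r(D)=\sum_{p\in C\cap D}\varepsilon_p\, r(C)=Q(C,D)\,r(C),
\end{equation*}
which is the first equality.

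Finally I would deduce the second equality by applying the first one to $D':=t_C^{-1}(D)$. Since $t_C$ is an orientation preserving diffeomorphism fixing $C$, it preserves algebraic intersection numbers, so $Q(C,D')=Q(t_C(C),t_C(D'))=Q(C,D)$, and therefore $r(D)=r(t_C(D'))=r(D')+Q(C,D)\,r(C)$, which rearranges to $r(t_C^{-1}(D))=r(D)-Q(C,D)\,r(C)$. The main obstacle is the local sign bookkeeping in the key step: one must verify that each wrap-around strand contributes $\varepsilon_p\,r(C)$ with no leftover correction coming from the two transition arcs where the wrap rejoins the rest of $D$, and that the right handed convention yields the stated sign rather than its negative. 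Choosing $v_0$ generic (so that these bounded transition arcs carry no $v_0$-tangency) and checking the model twist explicitly, in the spirit of the half-line computation illustrated in Figure~\ref{ex_R-modification}, should dispose of these subtleties.
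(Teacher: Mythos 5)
Your proof is correct and follows essentially the same route as the paper: both localize the computation to a neighborhood of $C$ and use Lemma~\ref{lem:comp:winding} to convert the winding number into a signed count against a fixed direction (the paper uses the $+45^\circ$ half-line after isotoping all intersection points into one small disk, while you sum the per-strand contributions $\varepsilon_p\,r(C)$ over an annular neighborhood). The only cosmetic difference is that you deduce the $t_C^{-1}$ formula formally from the $t_C$ formula applied to $t_C^{-1}(D)$, whereas the paper just repeats the local argument; both are fine.
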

\begin{proof}
By using isotopy in a neighborhood of $C$, we may assume that all intersection points are located in a small disk as shown in the left picture of Figure~\ref{winding_Dehn}. The right picture describes a part of $t_C(D)$ in the small disk. We can now easily check the first equality by applying Lemma~\ref{lem:comp:winding} to the $+45^\circ$ half-line. The same argument shows the second equality. 
\begin{figure}[h!]
\begin{center}
\includegraphics[width=3.7in]{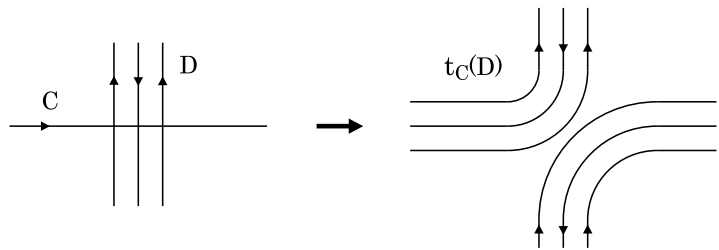}
\caption{A local picture of a Dehn twist}
\label{winding_Dehn}
\end{center}
\end{figure}
\end{proof}

\section{Proof of the main result}\label{sec:proof of main}
In this section, we prove Proposition~\ref{sec:algorithm:prop:PALF} and Theorem~\ref{sec:algorithm:thm:exotic_PALF}, from which the main result Theorem~\ref{sec:algorithm:mainthm} immediately follows. We also prove Corollary~\ref{sec:algorithm:cor:boundary sum}. We use the same symbols and the assumptions as those in Section~\ref{section:main algorithm}. In particular, $X$ denotes a fixed PALF satisfying the conditions in Subsection~\ref{subsec:S}, and $X^{(m)}$ and $X_i^{(m)}$ denote the PALF's obtained by applying Steps~\ref{step:modified PALF}--\ref{step:condition:exotic} to $X$. However, unless otherwise stated, we do not assume that an $n$-tuple $m$ of non-negative integers satisfies the conditions in Step~\ref{step:condition:exotic}. 

\subsection{Key submanifolds}\label{sec:proof:subsec:key}We first discuss topological properties of certain submanifolds of $X_i^{(m)}$'s. These submanifolds have the properties similar to those of Gompf nuclei (\cite{G0}, \cite{Ue}) and play important roles in the proof of the desired theorem. These are studied in more detail in Section~\ref{sec:ex}. 

The assumption on $X$ gives integers $j_1, j_2, j_3$ with $1\leq j_1<j_2<j_3\leq n$ such that the oriented curves $\gamma_{-1},\alpha_1,\alpha_2$ are contained in $\{C_{j_1},C_{j_2},C_{j_3}\}$.  Let $m'$ be the 3-tuple of integers defined by $m'=(m_{j_1}, m_{j_2},m_{j_3})$. 
Let $\mathbb{S}^{(m')}$ denote the surface obtained from $\mathbb{S}$ by applying the $m_{j_1}, m_{j_2}, m_{j_3}$ times $R$-modifications to $C_{j_1},C_{j_2},C_{j_3}$, respectively. We note that $\mathbb{S}^{(m')}$ is a subsurface of $\mathbb{S}^{(m)}$. 
Let $N$ be the PALF with fiber $\mathbb{S}$ whose monodromy factorization is 
\begin{equation*}
(\gamma_{1}, \beta, \gamma_{-1},C_{j_1}, C_{j_2}, C_{j_3}). 
\end{equation*}
 For an integer $i$, let $N^{(m')}_i$ be the PALF with fiber $\mathbb{S}^{(m')}$ whose monodromy factorization is
\begin{align*}
(\gamma_{1}^{(i)}, \beta^{(i)},  \gamma_{-1}^{(i)}, \, & E_1^{j_1}, E_2^{j_1}, \cdots, E_{m_1}^{j_1}, C_1(m_1), \\
 &E_1^{j_2}, E_2^{j_2}, \dots, E_{m_2}^{j_2}, C_2(m_2), E_{1}^{j_3}, E_{2}^{j_3}, \dots, E_{m_3}^{j_3}, C_{j_3}(m_3)). 
\end{align*}
The total space $N^{(m')}_i$ satisfies the following properties, similarly to Gompf nuclei. 

\begin{lemma}\label{sec:proof:lem:top:nuclei}Each $N^{(m')}_i$ $(i\in \mathbb{Z})$ satisfies the following. 

$(1)$ $\pi_1(N^{(m')}_i)\cong 1$ and $H_2(N^{(m')}_i)\cong \mathbb{Z}\oplus \mathbb{Z}$. 

$(2)$ There exists a basis $T_i, S_i$ of $H_2(N_i^{(m')};\mathbb{Z})$ satisfying the following. 
\begin{itemize}
 \item $T_i\cdot T_i=0$, \; $T_i\cdot S_i=1$, \; and \; $S_i\cdot S_i\in \{0,1\}$.
 \item $T_i$ is represented by a smoothly embedded torus in $N_i^{(m')}$. 
\end{itemize}
Furthermore, $S_i\cdot S_i=S_j\cdot S_j$, if $i\equiv j \pmod{2}$.

$(3)$ The intersection form of $N^{(m')}_i$ is unimodular. Consequently, $\partial N^{(m')}_i$ is a homology $3$-sphere. 

$(4)$ The intersection form of $N^{(m')}_i$ is isomorphic to that of $N^{(m')}_j$, if $i\equiv j\pmod{2}$. 
\end{lemma}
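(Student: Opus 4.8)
The plan is to strip the problem down to a single explicit handlebody and read off the invariants. First I would apply Lemma~\ref{sec:R:lem:diffeo:modification}(1) repeatedly to cancel every auxiliary handle coming from the $R$-modifications. This identifies $N^{(m')}_i$, up to diffeomorphism, with the handlebody obtained from the PALF with fiber $\mathbb{S}$ and vanishing cycles $\gamma_{1}^{(i)},\beta^{(i)},\gamma_{-1}^{(i)},C_{j_1},C_{j_2},C_{j_3}$ (where $\{C_{j_1},C_{j_2},C_{j_3}\}=\{\gamma_{-1},\alpha_1,\alpha_2\}$) by decreasing the framings of the three $2$-handles along $\gamma_{-1},\alpha_1,\alpha_2$ by $m_{j_1},m_{j_2},m_{j_3}$. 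The first three vanishing cycles form a copy of $\mathbb{T}_W$ with $W=t_{\alpha_1}^{i}$, which is diffeomorphic to $T^2\times D^2$. Applying the simultaneous conjugation by $W^{-1}$ exactly as in the proof of Theorem~\ref{thm:log:map}, together with the identifications of Proposition~\ref{prop:identification of boundary T}, I would present $N^{(m')}_i$ as $T^2\times D^2$ with three $2$-handles attached along curves in $T^3=\partial(T^2\times D^2)$ whose homology classes are $[\alpha_1]=[S^1\times pt\times pt]$, $[\alpha_2]=[pt\times S^1\times pt]$, and $[t_{\alpha_1}^{-i}(\gamma_{-1})]=[\mu]-i[\alpha_1]$ with $\mu=pt\times pt\times\partial D^2$; here I use $Q(\alpha_1,\gamma_{-1})=1$. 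The framings come from Proposition~\ref{prop:identification of boundary T}(2), corrected by the $-1$ of the PALF convention and by the $R$-modification counts.

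For claim (1), since $\pi_1(T^2\times D^2)=\mathbb{Z}^2$ is generated by $\alpha_1$ and $\alpha_2$, the $2$-handles along $\alpha_1$ and $\alpha_2$ kill both generators while the handle along the meridian direction adds no relation, so $\pi_1(N^{(m')}_i)=1$. An Euler-characteristic count, $\chi=\chi(T^2)+3=3$, together with $b_1=0$ gives $b_2=2$, and freeness of $H_2$ follows from $H_1=0$; hence $H_2(N^{(m')}_i)\cong\mathbb{Z}\oplus\mathbb{Z}$.

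For claims (2)--(4) I would produce the basis geometrically. Take $T_i=[T^2\times\{0\}]$, which survives as an embedded torus of self-intersection $0$ because the three handles are attached near the boundary of $T^2\times D^2$. The primitive kernel class of the map $\mathbb{Z}^3\to H_1(T^2\times D^2)$ sending the handles to the classes $-i[\alpha_1],[\alpha_1],[\alpha_2]$ of their attaching circles is $[h_{\gamma_{-1}}+i\,h_{\alpha_1}]$; call it $S_i$. It is represented by a closed surface meeting $T_i$ once, so $T_i\cdot S_i=1$. Converting to a Kirby diagram of $T^2\times D^2$ from Figure~\ref{fig:knots_in_T} and cancelling its two $1$-handles against the $\alpha_1$- and $\alpha_2$-handles, $N^{(m')}_i$ becomes a $2$-handlebody on a framed $2$-component link with intersection matrix $\left(\begin{smallmatrix}0&1\\1&S_i\cdot S_i\end{smallmatrix}\right)$. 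The crucial point is that cancelling the $\alpha_1$-handle, over which the $\gamma_{-1}$-attaching circle runs $i$ times, changes the framing of the $\gamma_{-1}$-component by a term of the form $i^{2}f_{\alpha_1}$, where $f_{\alpha_1}$ is the (fixed, once $m'$ is fixed) framing of the $\alpha_1$-handle. Replacing $S_i$ by $S_i+kT_i$ changes $S_i\cdot S_i$ by $2k$ without disturbing $T_i\cdot S_i=1$, so after normalization $S_i\cdot S_i\in\{0,1\}$. Since $i^{2}\equiv i\pmod 2$, this normalized value is determined by $i\bmod 2$, which gives the final assertion of (2); and because an indefinite unimodular rank-$2$ form is determined by its type (even or odd), this also yields (4). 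Finally $\det\left(\begin{smallmatrix}0&1\\1&S_i\cdot S_i\end{smallmatrix}\right)=-1$, so the form is unimodular; combined with $H_1(N^{(m')}_i)=0$, Poincar\'e--Lefschetz duality forces $\partial N^{(m')}_i$ to be a homology $3$-sphere, proving (3).

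The \emph{main obstacle} is the explicit framing-and-linking bookkeeping inside $T^2\times D^2$: one must convert the surface framings of $\gamma_{-1},\alpha_1,\alpha_2$ into Seifert framings via Proposition~\ref{prop:identification of boundary T}(2), fold in the PALF convention and the $R$-modification decreases, perform the $1$-handle cancellations carefully, and verify that the dependence of $S_i\cdot S_i$ on $i$ enters only through the term $i^{2}f_{\alpha_1}$ modulo $2$. Once this computation is pinned down, claims (1)--(4) are immediate from standard handlebody theory.
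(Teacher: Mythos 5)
Your proposal follows essentially the same route as the paper's proof: cancel the $R$-modification handles via Lemma~\ref{sec:R:lem:diffeo:modification}, use Proposition~\ref{prop:identification of boundary T} to present $N_i^{(m')}$ as $T^2\times D^2$ with three $2$-handles attached along curves representing $[\alpha_1]$, $[\alpha_2]$, $[\mu]-i[\alpha_1]$, take $T_i$ to be the torus class and $S_i$ the class of the $\gamma_{-1}$-handle slid $i$ times over the $\alpha_1$-handle, and normalize by adding multiples of $T_i$. (The paper phrases the passage from $N_0^{(m')}$ to $N_i^{(m')}$ as a regluing by $\varphi_W$ via Theorem~\ref{thm:log:map} rather than as your simultaneous conjugation by $W^{-1}$, but these are the same move, since that is exactly how Theorem~\ref{thm:log:map} is proved.)

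There is, however, one concrete error, and it sits precisely in the step you flag as the crucial verification. You assert that the $i$-dependence of $S_i\cdot S_i$ enters \emph{only} through the slide term $i^2 f_{\alpha_1}$ (plus even linking contributions), i.e.\ you treat the framing coefficient of the $\gamma_{-1}$-component as independent of $i$. It is not: the twisting $t_{\alpha_1}^{-i}$ (equivalently, the regluing map $\varphi_W$) shifts the framing of that handle linearly in $i$. This is exactly what the paper extracts from Section~4 of \cite{AY6}: the framing of $\gamma_{-1}''$ is $f_\gamma - i$, so the correct formula is $S_i'\cdot S_i' = f_\gamma - i + i^2\cdot f_1 + 2i\cdot lk(\gamma_{-1}'',\alpha_1')$. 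Your conclusions survive this correction, because $-i + i^2 f_1 \equiv i(1+f_1) \pmod{2}$ still depends only on $i \bmod 2$, which is all that claims (2) and (4) require; but your stated verification would fail if carried out literally (a linear term does appear), and for odd $i$ your formula predicts the wrong parity of $S_i\cdot S_i$, i.e.\ the wrong type (even versus odd) of the intersection form. The type is irrelevant to this lemma but is used later (compare Proposition~\ref{subsec:nuclei:prop:nuclei:top}(3)), so the linear framing shift must be accounted for rather than ruled out.
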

\begin{proof}According to Lemma~\ref{sec:R:lem:diffeo:modification}, we see that $N^{(m')}_0$ is obtained from $\mathbb{T}\cong T^2\times D^2$ by attaching 2-handles along the vanishing cycles $C_{j_1},C_{j_2},C_{j_3}$. Due to Proposition~\ref{prop:identification of boundary T}, we see that $N^{(m')}_0$ is diffeomorphic to the 4-manifold $T^2\times D^2$ with 2-handles attached along certain knots $\alpha_1', \alpha_2',\gamma_{-1}'$ in  $\partial(T^2\times D^2)$. The left picture of Figure~\ref{fig:handles_N} describes this handlebody. Here $\alpha_1', \alpha_2'$ are knots shown in the left picture, and $\gamma_{-1}'$ is a knot which itself is isotopic to the meridian of the 0-framed 2-handle of $T^2\times D^2$, though $\gamma_{-1}'$ may link with $\alpha_1', \alpha_2'$. We denote the framing coefficients of $\alpha_1',\alpha_2',\gamma_{-1}'$ by $f_1, f_2, f_{\gamma}$, respectively. 

According to Theorem~\ref{thm:log:map}, $N^{(m')}_i$ is obtained from the handlebody $N^{(m')}_0$ shown in the left picture by removing the obvious $T^2\times D^2\cong \mathbb{T}$ and regluing it via the diffeomorphism $\varphi_W:\partial \mathbb{T}\to \partial\mathbb{T}$, where $W=t_{\alpha_1}^i$. Let $\gamma_{-1}''$ be the knot in $\partial (T^2\times D^2)$ defined by $\gamma_{-1}''=\varphi_W(\gamma_{-1}')$. Since a self-diffeomorphism of $T^3$ is determined, up to isotopy, by its induced automorphism on $H_1(T^3;\mathbb{Z})$, Section~4 (especially Figures 3 and 4) of \cite{AY6} implies that the map $\varphi_W$ preserves the framed knots $\alpha_1', \alpha_2'$ in $\partial (T^2\times D^2)$, and that the framing of $\gamma_{-1}''$ is $f_{\gamma}-i$. Furthermore, $\gamma_{-1}''$ itself is isotopic to the knot shown in the right picture, though $\gamma_{-1}''$ may link with $\alpha_1',\alpha_2'$. Therefore, $N^{(m')}_i$ is obtained from $T^2\times D^2$ by attaching 2-handles along the knots $\alpha_1', \alpha_2', \gamma_{-1}''$ with the framings $f_1, f_2, f_{\gamma}-i$. The right picture of Figure~\ref{fig:handles_N} describes this handlebody. 

Let us consider the above handle decomposition of $N^{(m')}_i$. The claim (1) immediately follows from this handlebody. We check the claim (2). We slide the 2-handle $\gamma_{-1}''$ over the 2-handle $\alpha_1'$ $i$ times, so that the resulting attaching circle $\gamma_{-1}''+i\alpha_1'$ does not (algebraically) go over any 1-handle. Let $S_i',T_i$  be the classes of $H_2(N_i^{(m')};\mathbb{Z})$ represented by the 2-handle $\gamma_{-1}''+i\alpha_1'$ and the 0-framed 2-handle of $N_i^{(m')}$, respectively. Then $S_i', T_i$ is clearly a basis of $H_2(N_i^{(m')};\mathbb{Z})$, and $T_i$ is represented by a smoothly embedded torus in $N_i^{(m')}$. It is easy to check
\begin{equation*}
T_i\cdot T_i=0, \quad S_i'\cdot T_i=1, \quad S_i'\cdot S_i'=f_{\gamma}-i+i^2\cdot f_1+2i\cdot lk(\gamma_{-1}'', \alpha_1'), 
\end{equation*}
where $lk(\gamma_{-1}'', \alpha_1')$ denotes the linking number of $\gamma_{-1}''$ and $\alpha_1'$. Therefore, we can easily see that, for some integer $k_i$, the classes $S_i:=S_i'+k_i\cdot T_i$ and $T_i$ satisfy the conditions of the claim (2). The claims (3) and (4) immediately follow from (2) and (1). 
\end{proof}

\begin{figure}[h!]
\begin{center}
\includegraphics[width=3.7in]{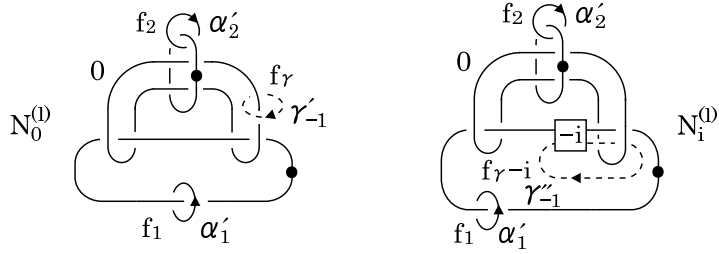}
\caption{$N^{(m')}_0$ and $N_i^{(m')}$}
\label{fig:handles_N}
\end{center}
\end{figure}

\subsection{Topological type of $X_{i}^{(m)}$}\label{sec:proof:subsec:top_type}
Using the above submanifolds, we next prove Proposition~\ref{sec:algorithm:prop:PALF}, which we restate for the reader's convenience. 
\newtheorem*{prop:X_i:again}{Proposition~\ref{sec:algorithm:prop:PALF}}
\begin{prop:X_i:again}
Fix an arbitrary $n$-tuple $m=(m_1,m_2,\dots, m_n)$ of non-negative integers. Then the following hold. 

$(1)$ The fundamental group and the homology group of each $X^{(m)}_i$ $(i\in \mathbb{Z})$ are isomorphic to those of $X$. 

$(2)$ $X_{2i}^{(m)}$'s $(i\in \mathbb{Z})$ are all homeomorphic to $X^{(m)}$. 

$(3)$ $X_{2i-1}^{(m)}$'s $(i\in \mathbb{Z})$ are pairwise homeomorphic. 

$(4)$ The open book on each $\partial X_i^{(m)}$ $(i\in \mathbb{Z})$ induced from the PALF $X_i^{(m)}$ is isomorphic to the one on $\partial X^{(m)}$ induced from the PALF $X^{(m)}$. Consequently, each $(\partial X_i^{(m)},\xi^{(m)}_i)$ is contactomorphic to $(\partial X^{(m)},\xi^{(m)})$. 

$(5)$ The intersection form of each $X^{(m)}_i$ $(i\in \mathbb{Z})$ is indefinite. Consequently, $\textnormal{sg}(\xi^{(m)}_i)\geq 1$. 

$(6)$ Each $X_{i}^{(m)}$ $(i\in \mathbb{Z})$ can be smoothly embedded into $X\#_{j=1}^n m_j\overline{\mathbb{C}{P}^2}$.

$(7)$ The PALF's $\widetilde{X}_{i}^{(m)}$'s $(i\in \mathbb{Z})$ are pairwise isomorphic. Consequently, $X_{i}^{(m)}$'s $(i\in \mathbb{Z})$ are sub-PALF's of the same PALF $\widetilde{X}_{0}^{(m)}$. 
\end{prop:X_i:again}
\begin{proof}The claims (1) and (4) follow from Lemmas~\ref{sec:R:lem:diffeo:modification} and \ref{lem: monodromy of T}, respectively. Lemma \ref{sec:proof:lem:top:nuclei} implies the former claim of (5). The latter claim of (5) follows from Etnyre's Theorem 1.2 in \cite{Et3}. 

We prove the claims (2) and (3). According to Theorem~\ref{thm:log:map}, each $X^{(m)}_i$ is a logarithmic transformation of $X^{(m)}$ along $\mathbb{T}\subset N_0^{(m')}$. Therefore, for each integers $i$ and $j$, the PALF $X^{(m)}_j$ is obtained from $X^{(m)}_i$ by removing $N_i^{(m')}$ and gluing $N_j^{(m')}$. Due to Lemma~\ref{sec:proof:lem:top:nuclei} and a theorem of Boyer (Corollary 0.9 in \cite{B}), the gluing map extends to a homeomorphism between $N_i^{(m')}$ and $N_j^{(m')}$, if $i\equiv j \pmod{2}$. This fact implies the claims (2) and (3). 

We check the claims (6) and (7). According to Lemma~\ref{sec:R:lem:diffeo:modification}, we can embed each $X_{i}^{(m)}$ into $X_i^{(0)}\#_{j=1}^n{m_j}\overline{\mathbb{C}{P}^2}$. Since Lemma~\ref{sec:partial:lem:condition:unchange} implies that $X_i^{(0)}$ is diffeomorphic to $X^{(0)}=X$, we obtain the claim (6). Lemma~\ref{sec:partial:lem:condition:unchange} also implies the claim (7). 
\end{proof}
\subsection{Relative genus function}
Here we review a simple version of the relative genus function introduced in \cite{Y5}. 

Let $Z$ be a compact connected oriented smooth 4-manifold, and let 
\begin{equation*}
g_Z:H_2(Z;\mathbb{Z})\to \mathbb{Z}
\end{equation*}
be its minimal genus function. Namely, $g_Z(\alpha)$ is the minimal genus of a smoothly embedded closed oriented connected surface in $Z$ which represents $\alpha\in H_2(Z;\mathbb{Z})$. In general, it is difficult to distinguish the genus functions of pairwise exotic 4-manifolds with $b_2\geq 2$, because of the difficulty of determining all possible identifications of their intersection forms. To avoid this issue, we use the relative genus function. 

Assume that $H_2(Z;\mathbb{Z})$ has no torsion and that $b_2(Z)\geq 2$. Put $k=b_2(Z)-1$. Let $Q$ be a $(k+1)\times (k+1)$ integral symmetric matrix, and let $\boldsymbol{g}=(g_1,g_2,\dots, g_k)$ be a $k$-tuple of non-negative integers. For an ordered basis $z=(z_0,z_1,\dots, z_{k})$ of $H_2(Z;\mathbb{Z})$, we define $G_{Z, Q,\boldsymbol{g}}(z)\in \mathbb{Z}\cup \{\infty\}$ by 
\begin{equation*}
G_{Z, Q,\boldsymbol{g}}(z)=\left\{
\begin{array}{ll}
g_Z(z_0), & \text{if the ordered basis $z$ satisfies the following. } \\
 &\: \text{$\bullet$ $Q$ is the intersection matrix of $Z$ with respect to $z$.}\\
 &\: \text{$\bullet$ $g_Z(z_i)\leq g_i$ for each $1\leq i\leq k$.}\\
\infty, & \text{otherwise.}
\end{array}
\right.
\end{equation*}
Furthermore, we define $G_Z(Q,\boldsymbol{g})\in \mathbb{Z}\cup \{\infty\}$ as the minimal number of $G_{Z, Q,\boldsymbol{g}}(z)$ for an ordered  basis $z$ of $H_2(Z;\mathbb{Z})$. Namely, 
\begin{equation*}
G_Z(Q,\boldsymbol{g})=\min\{G_{Z, Q,\boldsymbol{g}}(z)\mid \text{$z=(z_0,\dots,z_k)$ is an ordered basis of $H_2(Z;\mathbb{Z})$.} \}.
\end{equation*}
We thus obtain the function 
\begin{equation*}
G_Z:\textnormal{Sym}_{k+1}(\mathbb{Z})\times ({\mathbb{Z}}_{\geq 0})^k\to \mathbb{Z}\cup\{\infty\}, 
\end{equation*}
where $\textnormal{Sym}_{k+1}(\mathbb{Z})$ and ${\mathbb{Z}}_{\geq 0}$ denote the set of $(k+1)\times (k+1)$ integral symmetric matrices and  the set of non-negative integers, respectively. We call $G_Z$ the relative genus function of $Z$. Clearly, the relative genus function is an invariant of smooth 4-manifolds. That is, if a 4-manifold $Z'$ is diffeomorphic to $Z$ preserving the orientations, then $G_Z=G_{Z'}$. 

\subsection{Diffeomorphism type of $X_i^{(m)}$}
We distinguish diffeomorphism types of $X_i^{(m)}$'s, evaluating their relative genus functions by using the adjunction inequality. 

We first find a convenient basis of $H_2(X_i^{(m)};\mathbb{Z})$ to evaluate the relative genus function. Let $Y$ be the 4-manifold defined by 
\begin{equation*}
Y=X_i^{(m)}-\text{int}\, N_i^{(m')}. 
\end{equation*}
The diffeomorphism type of $Y$ is independent of the index $i$, since each $X_i^{(m)}$ is obtained from $X_0^{(m)}$ by removing $N_0^{(m')}$ and gluing $N_i^{(m')}$ as shown in Subsection~\ref{sec:proof:subsec:top_type}. Furthermore, since $\partial N_i^{(m')}$ is a homology 3-sphere, each $H_2(X_i^{(m)};\mathbb{Z})$ has the following natural decomposition:
\begin{equation*}
H_2(X_i^{(m)};\mathbb{Z})=H_2(N_i^{(m')};\mathbb{Z})\oplus H_2(Y;\mathbb{Z}). 
\end{equation*}
The handlebody $X_i^{(m)}$ has no 3-handles, and therefore $H_2(X_i^{(m)};\mathbb{Z})$ has no torsion. 
Let us fix a basis $v_1,v_2,\dots,v_p$ $(p\geq 0)$ of $H_2(Y;\mathbb{Z})$. Beware that $p$ can be zero. 
Using this basis, we obtain the desired ordered basis $v^{(i)}$ of $H_2(X_i^{(m)};\mathbb{Z})$ defined by
\begin{equation*}
v^{(i)}=(S_i,T_i,v_1,v_2,\dots,v_p),  
\end{equation*} 
where we regard the elements $S_i,T_i$ of $H_2(N_i^{(m')};\mathbb{Z})$ as elements of $H_2(X_i^{(m)};\mathbb{Z})$ via the above decomposition. 

Let ${Q}_i$ be the intersection matrix of $X_i^{(m)}$ with respect to the above basis $v^{(i)}$, and let $\boldsymbol{g}=(g_1,g_2,\dots,g_{p+1})$ be the $(p+1)$-tuple of non-negative integers defined by 
\begin{equation*}
(g_1,g_2,\dots,g_{p+1})=(1, g_Y(v_1), g_Y(v_2), \dots, g_Y(v_p)). 
\end{equation*}
Note that $\boldsymbol{g}$ is independent of the index $i$, and that $Q_i=Q_j$ if $i\equiv j \pmod{2}$. 

We next evaluate the value $G_{X_i^{(m)}}(Q_i,\boldsymbol{g})$ of the relative genus function of $X_i^{(m)}$. 
To achieve this, we equip a convenient PALF structure on each 4-manifold $X_i^{(m)}$ as follows. 
We regard the regular fiber of the PALF $X$ as the handlebody $\widehat{\Sigma}$ obtained in Step~\ref{step:handle}. According to Lemma~\ref{sec:R:lem:diffeo:modification} and Proposition~\ref{sec:R:prop:realizable}, we may assume that the $R$-modifications applied to each $C_j$ are $R^{\pm}$-modifications, without changing the diffeomorphism type of $X_i^{(m)}$. According to Proposition~\ref{sec:R:prop:realizable}, each auxiliary curve $E_k^{j}$ does not intersect with $\alpha_1$. Let $\widehat{\Sigma}^{(m)}$ denote the surface obtained from $\widehat{\Sigma}$ by applying these $m_1,m_2,\dots,m_n$ times $R^{\pm}$-modifications to $C_1,C_2,\dots,C_n$, respectively. 

Here let $C_j^{(i)}$ and $C_j^{(i)}(m_j)$ $(1\leq j\leq n)$ be the oriented simple closed curves in $\widehat{\Sigma}$ and $\widehat{\Sigma}^{(m)}$ defined by 
\begin{equation*}
C_j^{(i)}=t_{\alpha_1}^{-i}(C_j) \quad \text{and} \quad C_j^{(i)}(m_j)=t_{\alpha_1}^{-i}(C_j(m_j)), 
\end{equation*}
respectively, where their orientations are those induced from $C_j$ and $C_j(m_j)$. Since each $E_k^{j}$ does not intersect with $\alpha_1$, each $C_j^{(i)}(m_j)$ is obtained by applying $m_j$ times $R^{\pm}$-modifications to $C_j^{(i)}$, and each $E_k^{j}$ is the auxiliary curve of the $k$-th $R^{\pm}$-modification to $C_j^{(i)}$. Therefore, taking a simultaneous conjugation of the monodromy factorization of $X_i^{(m)}$ with $t_{\alpha_1}^{-i}$, we obtain the desired PALF structure with fiber $\widehat{\Sigma}^{(m)}$ whose monodromy factorization is
\begin{align*}
(\gamma_{1}, \beta, \gamma_{-1}, \, &E_1^{1}, E_2^{1}, \cdots, E_{m_1}^{1}, C_1^{(i)}(m_1), \\
 &E_1^{2}, E_2^{2}, \dots, E_{m_2}^{2}, C_2^{(i)}(m_2), \dots, E_{1}^{n}, E_{2}^{n}, \dots, E_{m_{n}}^{n}, C_n^{(i)}(m_n)). 
\end{align*}

Applying this PALF structure on $X_i^{(m)}$ and the adjunction inequality, we evaluate the value $G_{X_i^{(m)}}(Q_i,\boldsymbol{g})$ of the relative genus function. 

\begin{proposition}\label{sec:proof:prop:relative genus}Fix an $n$-tuple $m$ of non-negative integers satisfying the conditions in Step~\ref{step:condition:exotic}. Then, for each non-negative integer $\mu$, there exists a positive integer $I_\mu$ such that the following inequalities hold for any integer $i$ with $\lvert i\rvert \geq I_\mu$.
\begin{equation*}
\mu<G_{X_i^{(m)}}(Q_i, \boldsymbol{g})<\infty
\end{equation*}
\end{proposition}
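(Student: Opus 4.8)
The plan is to read off both inequalities from the adjunction inequality for PALF's, applied to the explicit PALF structure on each $X_i^{(m)}$ constructed above, while using the relative genus function precisely to avoid identifying the intersection forms. The finiteness $G_{X_i^{(m)}}(Q_i,\boldsymbol g)<\infty$ is the easy half: the ordered basis $v^{(i)}=(S_i,T_i,v_1,\dots,v_p)$ is itself admissible, because its Gram matrix equals $Q_i$ by definition, because $T_i$ is represented by an embedded torus (Lemma~\ref{sec:proof:lem:top:nuclei}) so $g_{X_i^{(m)}}(T_i)\le 1=g_1$, and because a minimal-genus surface for $v_j$ inside $Y$ also sits in $X_i^{(m)}$, giving $g_{X_i^{(m)}}(v_j)\le g_Y(v_j)$. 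Hence $G_{X_i^{(m)}}(Q_i,\boldsymbol g)\le g_{X_i^{(m)}}(S_i)<\infty$ for every $i$.

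The core of the lower bound is a rotation-number computation showing that $\lvert\langle c_1(X_i^{(m)}),S_i\rangle\rvert$ grows linearly in $\lvert i\rvert$. First I would record $\langle c_1(X_i^{(m)}),T_i\rangle=0$, which is forced by applying the adjunction inequality to the embedded torus $T_i$ of self-intersection $0$. Next, writing $S_i=S_i'+k_iT_i$ and using the handle description of $N_i^{(m')}$ from Theorem~\ref{thm:log:map} and Lemma~\ref{sec:proof:lem:top:nuclei}, the class $S_i'$ is carried by the attaching circle $\gamma_{-1}''+i\alpha_1'$, and Proposition~\ref{sec:PALF:Chern} assigns to the handle along $\alpha_1'$ the rotation number $r(C_j(m_j))$ of the modified vanishing cycle with $C_j=\alpha_1$. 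This yields
\[
\langle c_1(X_i^{(m)}),S_i\rangle=\langle c_1(X_i^{(m)}),S_i'\rangle=(\text{const})+i\cdot r\bigl(C_j(m_j)\bigr),
\]
the $i$-independence of the remaining contributions coming from $r(\alpha_1)=0$ together with Lemma~\ref{lem:rotation:dehn}. Here Step~\ref{step:condition:exotic} does the essential work: the condition $j\in J_{\alpha_1}$, equivalently $m_j\ge 1$ by Remark~\ref{sec:main:rem:step3}, lets me choose the $R^{\pm}$-modifications applied to $\alpha_1$ so that $r(C_j(m_j))\neq 0$, making the slope a nonzero integer and forcing $\lvert\langle c_1(X_i^{(m)}),S_i\rangle\rvert\to\infty$.

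With $b:=\langle c_1(X_i^{(m)}),S_i\rangle\to\infty$ in hand, I would prove $\mu<G_{X_i^{(m)}}(Q_i,\boldsymbol g)$ by contradiction: suppose an admissible basis $z=(z_0,\dots,z_k)$ has $g_{X_i^{(m)}}(z_0)\le\mu$. Then every $z_l$ has genus bounded independently of $i$, so the adjunction inequality bounds each $\lvert\langle c_1(X_i^{(m)}),z_l\rangle\rvert$ by a constant $B$ independent of $i$ (using that $z_l\cdot z_l=(Q_i)_{ll}$ depends only on the parity of $i$). Consider the functional $\rho(z):=\langle c_1(X_i^{(m)}),z\rangle-b\,(T_i\cdot z)$; one checks $\rho(S_i)=\rho(T_i)=0$, so $\rho$ vanishes on $H_2(N_i^{(m')})$, factors through $H_2(Y)$, and is therefore independent of $i$. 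The essential estimate I would then establish is a uniform adjunction-type bound $\lvert\rho(z)\rvert\le C\bigl(g_{X_i^{(m)}}(z)+1\bigr)$ with $C$ independent of $i$, coming from the fact that $c_1(X_i^{(m)})|_Y$ is $i$-independent. Granting this, all $\rho(z_l)$ are bounded; since $\langle c_1,z_j\rangle=b\,(T_i\cdot z_j)+\rho(z_j)$ is bounded while $\lvert b\rvert\to\infty$, we get $T_i\cdot z_j=0$ for all $j\ge1$, whence (as $z\mapsto T_i\cdot z$ is primitive, $T_i\cdot S_i=1$) $T_i\cdot z_0=\pm1$, and finally $\lvert\langle c_1,z_0\rangle\rvert\ge\lvert b\rvert-\lvert\rho(z_0)\rvert\to\infty$, contradicting the boundedness of $\langle c_1,z_0\rangle$. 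This determines $I_\mu$.

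The step I expect to be the main obstacle is exactly the uniform bound on $\rho$, i.e.\ ruling out that the growing $c_1$-pairing is absorbed by the $H_2(Y)$-components of the basis classes: a priori a class of bounded genus in $X_i^{(m)}$ could have a large $H_2(Y)$-component whose $\rho$-pairing cancels the large term $b\,(T_i\cdot z)$, and when the complement $Y$ has indefinite form this cannot be excluded by intersection data alone. Overcoming it requires relating the minimal genus in $X_i^{(m)}$ to data on the fixed complementary piece $Y$ (with $c_1|_Y$ and the gluing along the homology sphere $\partial N_i^{(m')}$ both $i$-independent), uniformly as $\lvert i\rvert\to\infty$. Controlling this interaction between the nucleus $N_i^{(m')}$ and $Y$ is the technical heart of the proof.
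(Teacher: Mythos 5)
Your finiteness argument and the overall skeleton of your lower bound (linear growth in $i$ of the $c_1$-pairing on the section class, adjunction bounds on the bounded-genus basis classes, and primitivity of the torus functional forcing $T_i\cdot z_0=\pm 1$) all match the paper's proof. But there is a genuine gap at exactly the point you flag yourself: the uniform bound $\lvert\rho(z)\rvert\le C\bigl(g_{X_i^{(m)}}(z)+1\bigr)$. No such bound follows from anything established in the paper. The adjunction inequality constrains $\langle c_1,\cdot\rangle$ for an honest Stein structure; it says nothing about the auxiliary functional $\rho=\langle c_1,\cdot\rangle-b\,(T_i\cdot\,\cdot\,)$, which is a difference of two potentially large terms. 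A class of bounded genus and fixed square can a priori have arbitrarily large components in the splitting $H_2(N_i^{(m')};\mathbb{Z})\oplus H_2(Y;\mathbb{Z})$ with cancelling contributions, so you cannot conclude $T_i\cdot z_l=0$, and your contradiction argument does not close. The fact that ``$c_1|_Y$ is $i$-independent'' does not help, because adjunction in $X_i^{(m)}$ does not decompose along the splitting.

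The paper never proves (or needs) any bound of this kind; it sidesteps the cancellation by a different mechanism, which is the idea your proposal is missing. Working with an explicit handle basis, it writes $\langle K,z\rangle=i\cdot P_1(z)\cdot r(C_1(m_1))+P_2(z)+P_3(z)$, and then exploits the freedom in the choice of the $R^\pm$-modifications: by Lemma~\ref{sec:R:lem:diffeo:modification} and Proposition~\ref{sec:R:prop:realizable}, re-choosing these modifications changes the Stein structure (hence the cocycle $K$ and the values $r(C_j(m_j))$) but \emph{not} the handlebody $\mathcal{H}_i^{(m)}$, the diffeomorphism type, or the genus function. Lemma~\ref{sec:proof:lem:equality on K} then says that for each \emph{individual} class $z$ one may choose the modifications so that all three terms have the same sign, i.e.
\begin{equation*}
\lvert\langle K, z \rangle\rvert = \bigl|i\cdot P_1(z)\cdot r(C_1(m_1))\bigr|+\bigl|P_2(z)\bigr|+\bigl|P_3(z)\bigr|, \qquad \bigl|r(C_1(m_1))\bigr|\geq 1,
\end{equation*}
and applying adjunction with that class-adapted Stein structure rules out precisely the cancellation you worry about, forcing $P_1(w_k)=0$ for every bounded-genus basis class once $\lvert i\rvert\geq I_\mu$. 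This per-class change of Stein structure on a fixed smooth 4-manifold is the technical heart of the paper's proof; without it (or some substitute for it), your proposal is incomplete.
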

\begin{proof}
The right inequality follows from the obvious inequalities below. 
\begin{equation*}
G_{X_i^{(m)}}(Q_i, \boldsymbol{g})\leq G_{X_i^{(m)},Q_i, \boldsymbol{g}}(v^{(i)})=g_{X_i^{(m)}}(S_i)<\infty. 
\end{equation*}

In the rest, we prove the left inequality. Here we briefly explain our procedure. In Part 1, we change the handle decomposition of each $X_i^{(m)}$ induced from the PALF structure to simplify the 2-chain group of $X_i^{(m)}$ given by the handlebody structure. In Part 2, we calculate the value of a cocycle on each 2-chain, where the cocycle is the one representing the first Chern class $c_1(X_i^{(m)})$. In Part 3, we alter the values of the cocycle on 2-chains by replacing $R^\pm$-modifications applied to $C_j$'s, i.e., by changing the Stein structure on $X_i^{(m)}$. In Part 4, we prove the desired left inequality, using the adjunction inequality and the values obtained in Part 3. 

We proceed the proof. Let us recall the condition of Step~\ref{step:condition:exotic} that oriented curves $\alpha_1,\alpha_2,\gamma_{-1}$ are contained in $\{C_1,C_2,\dots, C_n\}$. For simplicity of indices, we assume 
\begin{equation*}
C_1=\alpha_1, \quad C_2=\alpha_2 \quad \text{and} \quad C_3=\gamma_{-1}.
\end{equation*}
Though these assumptions on indices may change the diffeomorphism type of $X_i^{(m)}$, they do not affect this proof. 

\textbf{Part 1.} We first simplify the handlebody structure of $X_i^{(m)}$ induced from the PALF structure by cancelling 1-handles. 

Let us draw the handlebody diagram of $X_i^{(m)}$ induced from the PALF structure. 
We cancel all the 1-handles associated to the $R^\pm$-modifications applied to $C_j^{(i)}$'s, using 2-handles corresponding to the auxiliary curves $E_k^{j}$'s. The resulting handlebody is obtained from $\widehat{\Sigma}\times D^2$ by attaching 2-handles along the curves $\gamma_{-1}, \beta, \gamma_{1}, C_1^{(i)},C_2^{(i)},\dots, C_n^{(i)}$ in pairwise distinct pages of $\widehat{\Sigma}\times \partial D^2$. We denote the oriented attaching circles in $\widehat{\Sigma}\times \partial D^2$ of these 2-handles by $\widehat{\gamma}_{-1}, \widehat{\beta}, \widehat{\gamma}_{1}$, $\widehat{C}_1^{(i)},\widehat{C}_2^{(i)},\dots, \widehat{C}_n^{(i)}$, where the orientation of each circle is that of the corresponding curve in the page $\widehat{\Sigma}$. The framing of each of $\widehat{\gamma}_{-1}, \widehat{\beta}, \widehat{\gamma}_{1}$ is $-1$ with respect to its surface framing, and the framing of each $\widehat{C}_j^{(i)}$ is $-m_j-1$ with respect to its surface framing.  Let $H^1_{\alpha_1}, H^1_{\alpha_2}, H^1_{\alpha_3}, H^1_{\beta}$ denote the 1-handles of $\widehat{\Sigma}\times D^2\subset X_i^{(m)}$ corresponding to the 1-handles of $\widehat{\mathbb{S}}\subset \widehat{\Sigma}$ whose cocores are $\tau_{1}, \tau_{2}, \tau_{3}, \tau_{\beta}$, respectively. Note that each 2-handle $\widehat{C}_j^{(i)}$ goes over the 1-handle $H^1_{\alpha_1}$ algebraically $Q(C_j^{(i)},\tau_1)$ times. The same holds for other 2- and 1-handles. 

Here we regard the 2-handles $\widehat{\gamma}_{-1}, \widehat{\beta}, \widehat{\gamma}_{1}, \widehat{C}_1^{(i)}, \widehat{C}_2^{(i)}, \dots, \widehat{C}_n^{(i)}$ of the above handlebody as the basis of the 2-chain group of $X_i^{(m)}$ given by the handlebody. 
To simplify this group, we further change the handlebody structure of $X_i^{(m)}$, applying handle slides and cancellations similar to those in the proof of Proposition~\ref{prop:identification of boundary T} as follows. 
We cancel the 1-handle $H^1_{\beta}$ of $X_i^{(m)}$ with the 2-handle $\widehat{\beta}$. We note that, before the cancellation, we need to slide 2-handles over the 2-handle $\widehat{\beta}$ so that the resulting 2-handles do not go over the 1-handle $H^1_{\beta}$. We next cancel the 1-handle $H_{\alpha_3}^1$ with the 2-handle (corresponding to the 2-chain) $\widehat{\gamma}_1-\widehat{\beta}$, which is obtained by applying the above slide to $\widehat{\gamma}_1$. Lastly we cancel the 1-handles $H_{\alpha_1}^1$ and $H_{\alpha_2}^1$ with 2-handles $\widehat{C}_1^{(i)}$ and $\widehat{C}_2^{(i)}$, respectively. Let $\mathcal{H}_i^{(m)}$ denote the resulting handle decomposition of $X_i^{(m)}$. This handlebody yields the desired 2-chain group $\mathcal{C}_2(\mathcal{H}_i^{(m)})$. 

Due to these cancellations, we may regard the 2-chains $\widehat{\gamma}_{-1}+\widehat{\gamma}_{1}-2\widehat{\beta}, u_3, u_4, \dots, u_n$ as a basis of the 2-chain group $\mathcal{C}_2(\mathcal{H}_i^{(m)})$, where each $u_j$ $(3\leq j\leq n)$ is the 2-chain defined by
\begin{align*}
u_j&:=\widehat{C}_j^{(i)} \: - \: Q(C_j^{(i)}, \tau_\beta)\widehat{\beta} \: - \: Q(C_j^{(i)}, \tau_3)(\widehat{\gamma}_1-\widehat{\beta})\\
&\quad -\Bigl(Q(C_j^{(i)}, \tau_1)-Q(C_j^{(i)}, \tau_3)\Bigr)\widehat{C}_1^{(i)} \: - \: \Bigl(Q(C_j^{(i)},\tau_2)-Q(C_j^{(i)},\tau_3)\Bigr)\widehat{C}_2^{(i)}
\\
&=\widehat{C}_j^{(i)} \: - \: \Bigl(Q(C_j, \tau_\beta)-Q(C_j,\tau_3)\Bigr)\widehat{\beta} 
 \; - \; Q(C_j, \tau_3)\widehat{\gamma_1}\\ 
&\quad - \: \Bigl(Q(C_j,\tau_2)-Q(C_j,\tau_3)\Bigr)\widehat{C}_2^{(i)} \: - \: \Bigl(Q(C_j, \tau_1)+iQ(C_j,\alpha_1)-Q(C_j, \tau_3)\Bigr)\widehat{C}_1^{(i)}.
\end{align*}
Note that the 2-chain $u_3=\widehat{C}_3^{(i)}-2\widehat{\beta}+\widehat{\gamma}_{1}+i\widehat{C}_1^{(i)}$ is a cycle. 

\textbf{Part 2.}  Next, we calculate the value of a cocycle on each 2-chain in $\mathcal{C}_2(\mathcal{H}_i^{(m)})$, where the cocycle is the one representing the first Chern class $c_1(X_i^{(m)})$ of the Stein structure on the PALF $X_i^{(m)}$. 

According to Proposition~\ref{sec:PALF:Chern}, the class $c_1(X_i^{(m)})$  is represented by a cocycle whose value on each 2-handle of the handlebody induced from the PALF structure is the rotation number of the corresponding vanishing cycle in $\widehat{\Sigma}^{(m)}$. Let $K$ be such a cocycle. According to Remarks~\ref{sec:main:rem:step3}, \ref{sec:rotation:rem:def_pm} and Lemma~\ref{lem:rotation:dehn}, we see the following for each $j,k$. 
\begin{equation*}
r({\gamma}_{1})=0, \;\; r(\beta)=0, \;\; r({\gamma}_{-1})=0, \;\; r\bigl(C_j^{(i)}(m_j)\bigr)=r\bigl(C_j(m_j)\bigr), \;\; r(E_k^{j})=0. 
\end{equation*}
Therefore, from the handle moves in Part 1, we see that the values of $K$ on the 2-chains $\widehat{\gamma}_{-1}+\widehat{\gamma}_{1}-2\widehat{\beta}$ and $u_j$ $(3\leq j\leq n)$ obtained in Part 1 are as follows. 
\begin{align*}
 \langle K, \widehat{\gamma}_{-1} & +\widehat{\gamma}_{1}-2\widehat{\beta} \rangle = r({\gamma}_{-1})+r({\gamma}_{1})-2r(\beta)=0, \\
\langle K, u_j \rangle 
& = r\bigl(C_j^{(i)}(m_j)\bigr) \: - \: \Bigl(\sum_{k=1}^{m_j}r(E_k^{j})\Bigr) 
\: - \: \Bigl(Q(C_j, \tau_\beta)-Q(C_j,\tau_3)\Bigr)r(\beta)\\ 
&\quad  - \: Q(C_j, \tau_3)r(\gamma_1) \: - \: \Bigl(Q(C_j,\tau_2)-Q(C_j,\tau_3)\Bigr) \Bigl(r({C}_2^{(i)}(m_2))-\sum_{k=1}^{m_2}r(E_k^{2})\Bigr)\\
&\quad - \: \Bigl(Q(C_j, \tau_1)+iQ(C_j,\alpha_1)-Q(C_j, \tau_3)\Bigr)\Bigl(r({C}_1^{(i)}(m_1))-\sum_{k=1}^{m_1}r(E_k^{1})\Bigr)\\
&=r(C_j(m_j)) \: - \: \Bigl(Q(C_j,\tau_2)-Q(C_j,\tau_3)\Bigr)r({C}_2(m_2))\\
&\quad -\: \Bigl(Q(C_j, \tau_1)+iQ(C_j,\alpha_1)-Q(C_j, \tau_3)\Bigr)r({C}_1(m_1)).
\end{align*}

We now calculate the absolute value of $K$ on an arbitrary 2-chain. Let $z$ be an element of $\mathcal{C}_2(\mathcal{H}_i^{(m)})$. Then there exist unique integers $a, b_3,b_4,\dots,b_n$ such that
\begin{equation*}
z=a(\widehat{\gamma}_{-1}+\widehat{\gamma}_{1}-2\widehat{\beta})+b_3u_3+b_4u_4+\dots+b_{n}u_{n}.
\end{equation*}
Due to the above calculations, we see that the absolute value of $K$ on $z$ is as follows. 
\begin{align*}
\lvert\langle K, z \rangle\rvert &= \biggl|\sum_{j=3}^n b_j \biggl(r(C_j(m_j)) \, - \, \Bigl(Q(C_j,\tau_2)-Q(C_j,\tau_3)\Bigr)\, r({C}_2(m_2))\\
 &\qquad \qquad \quad - \, \Bigl(Q(C_j, \tau_1)+iQ(C_j,\alpha_1)-Q(C_j, \tau_3)\Bigr)\, r(C_1(m_1))\biggr)\biggr|.
\end{align*}
To simplify the right side, we define the functions $P_1, P_2, P_3:\mathcal{C}_2(\mathcal{H}_i^{(m)})\to \mathbb{Z}$ by 
\begin{align*}
P_1(z)&= -\sum_{j=3}^n b_jQ(C_j, \alpha_1),\\
P_2(z)&= \sum_{j\in J-\{1,2\}} b_j \Bigl(r(C_j(m_j)) - \Bigl(Q(C_j, \tau_1)-Q(C_j, \tau_3)\Bigr)r(C_1(m_1))\Bigr),\\
P_3(z)&= \sum_{j\in \{3,4,\dots, n\}-J} b_j r(C_j(m_j))\:- \: \sum_{j=3}^n b_j\Bigl(Q(C_j,\tau_2)-Q(C_j,\tau_3)\Bigr)r(C_2(m_2)).
\end{align*}
For the definition of the set $J$, see Step~\ref{step:condition:exotic} of the algorithm. 
These functions simplify the above equality as follows. 
\begin{equation*}
\lvert\langle K, z \rangle\rvert = \Bigl|i\cdot P_1(z)\cdot r(C_1(m_1))+P_2(z)+P_3(z)\Bigr|.
\end{equation*}

\textbf{Part 3.} Here we replace the $R^\pm$-modifications applied to $C_j$'s so that the right side of the above equality becomes a convenient form. 

Due to Lemma~\ref{sec:R:lem:diffeo:modification} and Proposition~\ref{sec:R:prop:realizable}, we can choose both $R^+$- and $R^-$-modifications as the $R^\pm$-modifications applied to $C_j$'s. Note that the handlebody structure $\mathcal{H}_i^{(m)}$ on ${X}_i^{(m)}$ does not depend on the choices of the $R^\pm$-modifications (see Lemma~\ref{sec:R:lem:diffeo:modification}), though the PALF structure on $X_i^{(m)}$ and hence $c_1(X_i^{(m)})$ depend on the choices. Consequently, the coefficients $a, b_3, b_4, \dots, b_n$ of a given 2-chain $z$ is independent of the choices. Since the different choices alter $C_j(m_j)$ and hence the values of $r(C_j(m_j))$'s, they change $K, P_2(z), P_3(z)$. 

Replacing the $R^\pm$-modifications, we alter the value of $\lvert\langle K, z \rangle\rvert$ as follows. 

\begin{lemma}\label{sec:proof:lem:equality on K}For any $z\in \mathcal{C}_2(\mathcal{H}_i^{(m)})$, we can replace the $R^\pm$-modifications applied to $C_j$'s $(1\leq j\leq n)$ so that the following hold. 
\begin{gather*}
\lvert\langle K, z \rangle\rvert = \bigl|i\cdot P_1(z)\cdot r(C_1(m_1))\bigr|+\bigl|P_2(z)\bigr|+\bigl|P_3(z)\bigr|,\\
\bigl|r(C_1(m_1))\bigr|\geq 1.
\end{gather*}
\end{lemma}
\begin{proof}[Proof of Lemma~\ref{sec:proof:lem:equality on K}]We prove the case $P_3(z)\geq 0$. Since $r(C_1)=r(\alpha_1)=0$, the conditions in Step~\ref{step:condition:exotic} of the algorithm guarantee $m_1\geq 1$. We can thus replace the $R^\pm$-modifications applied to $C_1$ so that 
\begin{equation*}
\bigl|r(C_1(m_1))\bigr|=\left\{
\begin{array}{ll}
2, &\text{if $m_1$ is even;}  \\
1, &\text{if $m_1$ is odd,}
\end{array}
\right.
\end{equation*}
and that the sign of $r(C_1(m_1))$ is the same as that of $i\cdot P_1(z)$. 
 Note that $P_1(z)$ is independent of the choices of the modifications. We thus obtain 
\begin{equation*}
i\cdot P_1(z)\cdot r(C_1(m_1))\geq 0.
\end{equation*}
 
Due to the conditions in Step~\ref{step:condition:exotic}, for each $j\in J-\{1,2\}$, we can replace the $R^\pm$-modifications applied to $C_j$ so that the sign of
\begin{equation*}
r(C_j(m_j))\: -\: \Bigl(Q(C_j, \tau_1)-Q(C_j, \tau_3)\Bigr)r(C_1(m_1)) 
\end{equation*}
is the same as that of $b_j$. Consequently, we obtain $P_2(z)\geq 0$. Since these operations do not change the value of $P_3(z)$, we obtain the desired claim. We can similarly prove the case $P_3(z)\leq 0$. 
\end{proof}
\textbf{Part 4.} Finally, we prove the desired left inequality. Let us fix an arbitrary non-negative integer $\mu$, and let $I_\mu$ be the positive integer defined by
\begin{equation*}
I_\mu = \max(\{2g_k-1-v_{k-1}\cdot v_{k-1}\mid 2\leq k\leq p+1\}\cup \{2\mu-1, 1\}).
\end{equation*}
The lemma below gives us the desired left inequality. 
\begin{lemma}\label{sec:proof_main_:lem:evaluation}For any integer $i$ with $\lvert i\rvert \geq I_\mu$, the following inequality holds. 
\begin{equation*}
\mu< G_{X_i^{(m)}}(Q_i, \boldsymbol{g}). 
\end{equation*}
\end{lemma}
\proof[Proof of Lemma~\ref{sec:proof_main_:lem:evaluation}]
Let $w=(w_0, w_1, \cdots, w_{p+1})$ be an ordered basis of $H_2(X_i^{(m)};\mathbb{Z})$. We assume the following conditions, since otherwise $G_{{X_i^{(m)}},Q_i, \boldsymbol{g}}(w)=\infty$.
\begin{itemize}
 \item $g_k\geq g_{X_i^{(m)}}(w_k)$ for each $1\leq k\leq p+1$. 
 \item $w_0\cdot w_0=S_i\cdot S_i$, \, $w_1\cdot w_1=0$, and $w_k\cdot w_k=v_{k-1}\cdot v_{k-1}$ for each $2\leq k\leq p+1$.
\end{itemize}

We first apply the adjunction inequality to $w_k$ for each $2\leq k\leq p+1$. Then, due to the conditions on the basis $w$, we obtain 
\begin{equation*}
2g_k-2\geq \lvert\langle K, w_k \rangle\rvert +v_{k-1}\cdot v_{k-1}. 
\end{equation*}
Here we regard the functions $P_1, P_2, P_3$ as the functions on $H_2(X_i^{(m)};\mathbb{Z})$, since the handlebody $\mathcal{H}_i^{(m)}$ has no 3-handles. 
The above inequality, Lemma~\ref{sec:proof:lem:equality on K} and the definition of $I_\mu$ thus give the following inequality. 
\begin{equation*}
I_\mu>\bigl|i\cdot P_1(w_k)\bigr|+\bigl|P_2(w_k)\bigr|+\bigl|P_3(w_k)\bigr|.
\end{equation*}
Since $\lvert i\rvert\geq I_\mu$, we obtain $P_1(w_k)=0$ for each $2\leq k\leq p+1$. Applying the adjunction inequality to $w_1$, we similarly obtain $P_1(w_1)=0$. 

Here recall that the 2-chain $u_3$ is a cycle. Since $w$ is a basis, there exist integers $x_0,x_1,\dots,x_{p+1}$ such that \begin{equation*}
[u_3]=x_0w_0+x_1w_1+\dots+ x_{p+1}w_{p+1}. 
\end{equation*}
The definition of $P_1$ shows $P_1([u_3])=1$, we thus get
\begin{equation*}
P_1(x_0w_0+x_1w_1+\dots+ x_{p+1}w_{p+1})=1.
\end{equation*}
Since  $P_1(w_k)=0$ for each $1\leq k\leq p+1$, it follows 
\begin{equation*}
P_1(x_0w_0)=x_0\cdot P_1(w_0)=1. 
\end{equation*}
We thus obtain $\lvert P_1(w_0)\rvert=1$. 

We next apply the adjunction inequality to $w_0$. Then, due to Lemma~\ref{sec:proof:lem:equality on K} and the fact $w_0\cdot w_0\in \{0,1\}$, we obtain the following inequality. 
\begin{equation*}
2g_{X_i^{(m)}}(w_0)-2\geq \bigl|i\cdot P_1(w_0)\bigr|+\bigl|P_2(w_0)\bigr|+\bigl|P_3(w_0)\bigr|. 
\end{equation*}
Since $\lvert i\rvert\geq I_\mu\geq 2\mu-1$ and $\lvert P_1(w_0)\rvert=1$, it follows
\begin{equation*}
2g_{X_i^{(m)}}(w_0)-2\geq 2\mu-1. \\
\end{equation*}
We thus obtain 
\begin{equation*}
G_{{X_i^{(m)}},Q_i, \boldsymbol{g}}(w)\geq g_{X_i^{(m)}}(w_0)>\mu.
\end{equation*}
Therefore the desired claim follows. This completes the proof of Proposition~\ref{sec:proof:prop:relative genus}. 
\end{proof}

Finally we prove Theorem~\ref{sec:algorithm:thm:exotic_PALF}, which we restate for convenience. 
\newtheorem*{thm:X_i:again}{Theorem~\ref{sec:algorithm:thm:exotic_PALF}}

\begin{thm:X_i:again}
Fix an $n$-tuple $m=(m_1,m_2,\dots, m_n)$ of non-negative integers satisfying the conditions in Step~\ref{step:condition:exotic}. Then the following hold. 

$(1)$ $X_{2i}^{(m)}$'s $(i\in \mathbb{Z})$ are pairwise homeomorphic Stein fillings of the same contact $3$-manifold $(\partial X^{(m)}, \xi^{(m)})$. Moreover, infinitely many of them are pairwise non-diffeomorphic.  

$(2)$ $X_{2i-1}^{(m)}$'s $(i\in \mathbb{Z})$ are pairwise homeomorphic Stein fillings of the same contact $3$-manifold $(\partial X^{(m)}, \xi^{(m)})$. Moreover, infinitely many of them are pairwise non-diffeomorphic.   

$(3)$ The fundamental group and the homology group of each $X^{(m)}_i$ $(i\in \mathbb{Z})$ are isomorphic to those of $X$. 

$(4)$ $\textnormal{sg}(\xi^{(m)})\geq 1$. Consequently, $\textnormal{sg}(\xi^{(m)})=1$, if the genus of the fiber $\Sigma^{(m)}$ of $X^{(m)}$ is one. 

$(5)$ Each $X_{i}^{(m)}$ $(i\in \mathbb{Z})$ can be smoothly embedded into $X\#_{j=1}^n m_j\overline{\mathbb{C}{P}^2}$.

$(6)$ $X_{i}^{(m)}$'s $(i\in \mathbb{Z})$ become pairwise diffeomorphic by attaching a $2$-handle to each $X_{i}^{(m)}$ along the same Legendrian knot in $(\partial X^{(m)}, \xi^{(m)})$ with the contact $-1$-framing. 
\end{thm:X_i:again}

\begin{proof}According to Proposition~\ref{sec:proof:prop:relative genus}, the relative genus function of $X_{i}^{(m)}$ is not equal to that of $X_{j}^{(m)}$, if $i\equiv j\pmod{2}$, and $\lvert i\rvert$ is sufficiently larger than $\lvert j\rvert$. Note that $Q_i=Q_j$ if $i\equiv j\pmod{2}$. Therefore, infinitely many of $X_{2i}^{(m)}$'s and $X_{2i-1}^{(m)}$'s are pairwise non-diffeomorphic. The desired claims thus immediately follow from Propositions~\ref{sec:algorithm:prop:PALF} (For (6), see also Subsection~\ref{sec:Lef:subsec:Stein}.). 
\end{proof}

Now we can easily prove Corollary~\ref{sec:algorithm:cor:boundary sum}. 
\newtheorem*{cor:boundary sum:again}{Corollary~\ref{sec:algorithm:cor:boundary sum}}

\begin{cor:boundary sum:again}
Fix an $n$-tuple $m=(m_1,m_2,\dots, m_n)$ of non-negative integers satisfying the conditions in Step~\ref{step:condition:exotic}. Then, for any Stein filling $Y$ of a contact $3$-manifold, there exists a contact structure $\zeta^{(m)}$ on $\partial X^{(m)}\#\partial Y$ satisfying the following. 

$(1)$ The boundary connected sums $X_{2i}^{(m)}\natural Y$'s $(i\in \mathbb{Z})$ are pairwise homeomorphic Stein fillings of the same contact $3$-manifold $(\partial X^{(m)}\#\partial Y, \zeta^{(m)})$. Moreover, infinitely many of them are pairwise non-diffeomorphic.  

$(2)$ $X_{2i-1}^{(m)}\natural Y$'s $(i\in \mathbb{Z})$ are pairwise homeomorphic Stein fillings of the same contact $3$-manifold $(\partial X^{(m)}\#\partial Y, \zeta^{(m)})$. Moreover, infinitely many of them are pairwise non-diffeomorphic.   

$(3)$ Each $X_{i}^{(m)}\natural Y$ $(i\in \mathbb{Z})$ can be smoothly embedded into $X\natural Y\#_{j=1}^n m_j\overline{\mathbb{C}{P}^2}$.

$(4)$ $\textnormal{sg}(\zeta^{(m)})\geq 1$. Furthermore, if $Y$ admits a PALF structure with genus zero fiber surface, and the fiber $\Sigma^{(m)}$ of $X^{(m)}$ is of genus one, then we may assume $\textnormal{sg}(\zeta^{(m)})=1$. 
\end{cor:boundary sum:again}
\begin{proof}
Since any Stein filling admits a PALF structure, there exist a bounded surface $F$ and curves $D_1,D_2,\dots,D_k$ in $F$ such that $Y$ is diffeomorphic to the PALF with fiber $F$ whose monodromy factorization is $(D_1,D_2,\dots,D_k)$. 
Let $Z$ and $\Sigma$ be the boundary connected sums $X\natural Y$ and $\mathbb{S}\natural F$, respectively. It immediately follows from a handlebody diagram of $Z$ that $Z$ admits the PALF structure with fiber $\Sigma$ whose monodromy factorization is \begin{equation*}
(\gamma_{1}, \beta, \gamma_{-1}, C_1, C_2, \dots, C_n, D_1,D_2,\dots,D_k). 
\end{equation*}
Let $l$ be the $(n+k)$-tuple of non-negative integers defined by  
\begin{equation*}
l=(m_1,m_2, \dots, m_n, 0,0,\dots, 0). 
\end{equation*}
For an integer $i$, let $Z_i^{(l)}$ denote the PALF obtained by applying Step~\ref{step:modified PALF} to $Z$. Due to its handlebody picture, we can easily see that the boundary sum $X_{i}^{(m)}\natural Y$ is diffeomorphic to $Z_i^{(l)}$. Note that the diffeomorphism type of $Z_i^{(l)}$ does not depend on the choices of $R$-modifications. Since $l$ satisfies the conditions of Step~\ref{step:condition:exotic} with respect to the PALF $Z$ (see Remark~\ref{sec:main:rem:step3}), the desired claims follow from Theorem~\ref{sec:algorithm:thm:exotic_PALF}. 
\end{proof}

\section{Examples}\label{sec:ex} Finally we construct various examples and prove theorems stated in Section~\ref{sec:intro}, demonstrating the algorithm in Section~\ref{section:main algorithm}. 

\subsection{Stein nuclei and their application}\label{sec:ex:subsec:nuclei} In this subsection, we study simple examples which we call Stein nuclei. They are variants of Gompf nuclei and share useful properties.  Applying them, we also prove Theorems~\ref{sec:intro:support} and \ref{intro:thm:most2-handlebody}. Beware that we use the symbol $N$ in Section~\ref{sec:proof of main} for a different meaning.

Let $N$ be the PALF with the fiber $\mathbb{S}$ whose monodromy factorization is 
\begin{equation*}
(\gamma_{1}, \beta, \gamma_{-1}, \gamma_{-1}, \alpha_1,\alpha_2), 
\end{equation*}
and let $m=(m_0,m_1,m_2)$ be a 3-tuple of non-negative integers. We apply Step~\ref{step:modified PALF} of the algorithm to $N$. Let $\gamma_{-1}(m_0), \alpha_1(m_1), \alpha_2(m_2)$ be the simple closed curves obtained by applying $m_0, m_1,m_2$ times $R$-modifications to $\gamma_{-1},\alpha_1,\alpha_2$, respectively. Let $E_k^{0}, E_k^{1}, E_k^{2}$ denote the auxiliary curves of $k$-th $R$-modifications applied to $\gamma_{-1}(m_0)$, $\alpha_1(m_1)$, $\alpha_2(m_2)$, respectively, and let $\mathbb{S}^{(m)}$ be the surface obtained from $\mathbb{S}$ by applying these $m_0+m_1+m_2$ times $R$-modifications. Since $\mathbb{S}$ is a surface of genus one, we may assume that the genus of $\mathbb{S}^{(m)}$ is also one. For an integer $i$, let $N^{(m)}$ and $N^{(m)}_i$ be the PALF's with fiber $\mathbb{S}^{(m)}$ whose monodromy factorizations are
\begin{align*}
(\gamma_{1}, \beta, \gamma_{-1}, \, & E_1^{0}, E_2^{0}, \cdots, E_{m_0}^{0}, \gamma_{1}(m_0), \\
 &E_1^{1}, E_2^{1}, \dots, E_{m_1}^{1}, \alpha_1(m_1), E_1^{2}, E_{2}^{2}, \dots, E_{m_{2}}^{2}, \alpha_2(m_2)),\\
(\gamma_{1}^{(i)}, \beta^{(i)}, \gamma_{-1}^{(i)}, \, & E_1^{0}, E_2^{0}, \cdots, E_{m_0}^{0}, \gamma_{1}(m_0), \\
 &E_1^{1}, E_2^{1}, \dots, E_{m_1}^{1}, \alpha_1(m_1), E_1^{2}, E_{2}^{2}, \dots, E_{m_{2}}^{2}, \alpha_2(m_2)), 
 \end{align*}
 respectively. Let $\eta^{(m)}$ and $\eta^{(m)}_i$ be the contact structures on the boundaries $\partial N^{(m)}$ and $\partial N^{(m)}_i$ induced from the Stein structures on $N^{(m)}$ and $N^{(m)}_i$, respectively. Note that $N^{(0)}=N$ and $N^{(m)}_0=N^{(m)}$. 

We study topological and smooth properties of $N_i^{(m)}$'s. Firstly, we draw handlebody diagrams of $N^{(m)}$ and $N_i^{(m)}$. According to Proposition~\ref{prop:identification of boundary T} and Lemma~\ref{sec:R:lem:diffeo:modification}, we see that $N^{(m)}$ has the handle decomposition in the left picture of Figure~\ref{fig:Stein_nuclei}. The obvious $T^2\times D^2$ in the picture is the submanifold $\mathbb{T}$ of $N^{(m)}$. According to Theorem~\ref{thm:log:map}, the PALF $N_i^{(m)}$ is a logarithmic transformation of $N^{(m)}$ along the obvious $T^2\times D^2$. Applying the procedure of a logarithmic transformation given in Section~4 of \cite{AY6}, we obtain the handlebody diagram of $N_i^{(m)}$ in the right picture. We check the boundary 3-manifold. Cancelling the 1-handles, we get the diagram of $N^{(m)}$ in the left picture of Figure~\ref{fig:boundary_Stein_nuclei}. Applying the slam-dunk operation, we have a Dehn surgery diagram of the boundary $\partial N^{(m)}$ in the right picture.

\begin{figure}[h!]
\begin{center}
\includegraphics[width=4.2in]{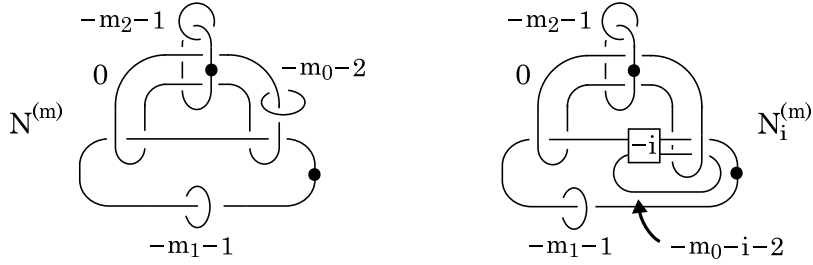}
\caption{$N^{(m)}$ and $N_i^{(m)}$}
\label{fig:Stein_nuclei}
\end{center}
\end{figure}

\begin{figure}[h!]
\begin{center}
\includegraphics[width=4.2in]{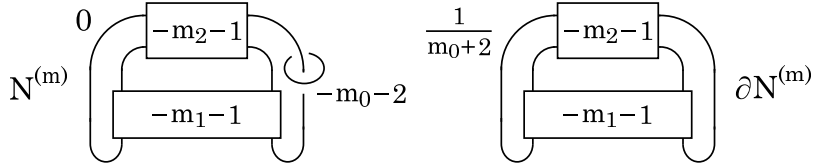}
\caption{$N^{(m)}$ and $\partial N^{(m)}$}
\label{fig:boundary_Stein_nuclei}
\end{center}
\end{figure}

Secondly, we consider the case $m_1=m_2=0$. In this case, $m$ does not satisfy the conditions of Step~\ref{step:condition:exotic}. Indeed, according to Lemma~\ref{sec:partial:lem:condition:unchange}, the PALF $N_i^{(m)}$ is isomorphic (hence diffeomorphic) to $N^{(m)}$ for any integer $i$. The diagram of $N^{(m)}$ in Figure~\ref{fig:Stein_nuclei} shows that the PALF $N^{(m)}$ is diffeomorphic to a Gompf nucleus, which was introduced independently by Gompf \cite{G0} and Ue \cite{Ue}. We note that each $N^{(m)}$ admits a Stein structure. It is well-known that Gompf nuclei are useful small building blocks for constructing various exotic smooth 4-manifolds by logarithmic transformations. However, exotic Stein fillings cannot be constructed from Gompf nuclei by any logarithmic transformation along the obvious $T^2\times D^2$. This can be seen as follows. In the case where the multiplicity of the logarithmic transformation is zero, the resulting manifold has an embedded 2-sphere with the self-intersection number $-1$ (\cite{G0}). Therefore, it does not admit any Stein structure. Since the obvious torus is contained in the cusp neighborhood, any logarithmic transformation with the multiplicity one does not change their diffeomorphism types  (\cite{G0}). Although each Gompf nucleus produces its infinitely many exotic copies by using logarithmic transformations of the multiplicities larger than one (\cite{G0}, \cite{Ue}), none of them admits any Stein structure (\cite{Y5}). 

Lastly, we consider the case $m_1\geq 1$. In this case, $m$ satisfies the conditions of Step~\ref{step:condition:exotic} (see Remark~\ref{sec:main:rem:step3}). Therefore, in contrast to Gompf nuclei, each $N^{(m)}$ produces infinitely many exotic Stein fillings by using logarithmic transformations (of the multiplicity one). For this reason, we call each 4-manifold $N^{(m)}_i$ a \textit{Stein nucleus} for $m_1\geq 1$. We hope that Stein nuclei become useful objects similarly to Gompf nuclei. 

We list basic properties of the Stein nuclei $N_i^{(m)}$'s. We first see topological properties. 

\begin{proposition}\label{subsec:nuclei:prop:nuclei:top}For an arbitrary $3$-tuple $m=(m_0,m_1,m_2)$ of non-negative integers, the following hold. 

$(1)$ $\pi_1(N^{(m)}_{i})\cong 1$ and $H_2(N^{(m)}_{i};\mathbb{Z})\cong \mathbb{Z}\oplus \mathbb{Z}$, for each integer $i$. 

$(2)$ The intersection form of each $N^{(m)}_{i}$ $(i\in \mathbb{Z})$ is unimodular and indefinite. Consequently, the boundary $\partial N^{(m)}_{i}$ is a homology $3$-sphere. 

$(3)$ The intersection form of $N^{(m)}_{i}$ is even, if and only if $m_0\equiv im_1\pmod{2}$. 

$(4)$ For each integers $i,j$, an orientation-preserving homeomorphism $\partial N_i^{(m)}\to \partial N_j^{(m)}$ extends to a homeomorphism $N_i^{(m)}\to N_j^{(m)}$, if and only if $im_1\equiv jm_1 \pmod{2}$. 

\end{proposition}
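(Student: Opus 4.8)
The plan is to read everything off the explicit handle diagrams of $N^{(m)}$ and $N_i^{(m)}$ in Figure~\ref{fig:Stein_nuclei}, exactly as was done in Lemma~\ref{sec:proof:lem:top:nuclei}. Indeed, $N_i^{(m)}$ is literally the submanifold $N_i^{(m')}$ of Lemma~\ref{sec:proof:lem:top:nuclei} for the PALF $X=N$ with $(C_{j_1},C_{j_2},C_{j_3})=(\gamma_{-1},\alpha_1,\alpha_2)$ and $m'=(m_0,m_1,m_2)$. Hence parts $(1)$ and $(2)$ would be immediate: that lemma gives $\pi_1(N_i^{(m)})\cong 1$, $H_2(N_i^{(m)};\mathbb{Z})\cong\mathbb{Z}\oplus\mathbb{Z}$, and a basis $T_i,S_i$ with $T_i\cdot T_i=0$ and $T_i\cdot S_i=1$, so that the intersection matrix is $\begin{pmatrix}0&1\\1&S_i\cdot S_i\end{pmatrix}$. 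This has determinant $-1$, so the form is unimodular and, having negative determinant in rank two, indefinite; consequently $\partial N_i^{(m)}$ is a homology $3$-sphere.

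For part $(3)$ I would compute the parity of $S_i\cdot S_i$. By Lemma~\ref{sec:R:lem:diffeo:modification}$(1)$ the $R$-modifications make the $2$-handles along $\gamma_{-1},\alpha_1,\alpha_2$ carry framings $-1-m_0,\,-1-m_1,\,-1-m_2$ relative to their surface framings. Passing to the picture $\mathbb{T}\cong T^2\times D^2$ via Proposition~\ref{prop:identification of boundary T}$(2)$, where the surface framings of $\gamma_{-1},\alpha_1,\alpha_2$ become the Seifert framings $-1,0,0$, these become the coefficients $f_\gamma=-2-m_0$, $f_1=-1-m_1$, $f_2=-1-m_2$ of Figure~\ref{fig:handles_N}. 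The computation in the proof of Lemma~\ref{sec:proof:lem:top:nuclei} then yields $S_i\cdot S_i\equiv S_i'\cdot S_i'=f_\gamma-i+i^2f_1+2i\cdot lk(\gamma_{-1}'',\alpha_1')\pmod 2$, and since $2i\cdot lk$ is even, $i^2\equiv i$ and $-i\equiv i$, this reduces to $S_i\cdot S_i\equiv m_0+i\,m_1\pmod 2$. As the matrix above is even precisely when $S_i\cdot S_i$ is even, part $(3)$ follows.

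For part $(4)$ I would invoke Boyer's Corollary~0.9 in \cite{B}, as in Lemma~\ref{sec:proof:lem:top:nuclei}: since each $N_i^{(m)}$ is simply connected with homology-sphere boundary, an orientation-preserving homeomorphism $\partial N_i^{(m)}\to\partial N_j^{(m)}$ extends to a homeomorphism $N_i^{(m)}\to N_j^{(m)}$ if and only if their intersection forms are isomorphic. By parts $(1)$ and $(2)$ both forms are indefinite unimodular of rank two and of signature zero, so they are isomorphic exactly when they share the same type (even or odd). By part $(3)$ the forms have the same type if and only if the congruences $m_0\equiv i\,m_1$ and $m_0\equiv j\,m_1\pmod 2$ hold simultaneously, i.e. if and only if $i\,m_1\equiv j\,m_1\pmod 2$. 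For the converse one only needs that an orientation-preserving homeomorphism $N_i^{(m)}\to N_j^{(m)}$ carries the intersection form isomorphically, forcing the two forms to agree in type. This would establish $(4)$.

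The main obstacle I anticipate is the parity bookkeeping in part $(3)$: correctly transporting the three $R$-modified framings through the identification $\mathbb{T}\cong T^2\times D^2$ and through the framing change $f_\gamma\mapsto f_\gamma-i$ produced by the logarithmic transformation, and then checking that every term other than $m_0+i\,m_1$ is even. A secondary delicate point is the use of Boyer's theorem in the strong form ``every orientation-preserving boundary homeomorphism extends once the forms are isomorphic,'' which is legitimate here only because the boundary is a homology sphere and the compatibility hypothesis in \cite{B} is thereby vacuous.
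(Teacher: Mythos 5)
Your proposal is correct and follows essentially the same route as the paper: read the intersection form off the explicit handle diagram (the paper computes $S_i\cdot S_i=-m_0-i-2-i^2(m_1+1)$ directly from Figure~\ref{fig:Stein_nuclei}, which reduces mod $2$ to your $m_0+im_1$), then combine unimodularity, indefiniteness, the classification of indefinite unimodular forms, and Boyer's theorem for $(4)$. The only cosmetic difference is that you route $(1)$--$(3)$ through Lemma~\ref{sec:proof:lem:top:nuclei} rather than re-deriving the diagram for this specific case, and you carry the (even, hence harmless) linking-number term explicitly where the paper's formula already omits it.
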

\begin{proof}Due to the diagram of $N_i^{(m)}$ in Figure~\ref{fig:Stein_nuclei}, the claim (1) immediately follows. Furthermore, we easily get a basis $T_i,S_i$ of $H_2(N_i^{(m)};\mathbb{Z})$ such that
\begin{equation*}
T_i\cdot T_i=0, \quad S_i\cdot S_i=-m_0-i-2-i^2(m_1+1), \quad S_i\cdot T_i=1. 
\end{equation*}
This fact implies the claims (2) and (3). Due to the classification theorem of intersection forms, the claims (2) and (3) imply that the intersection form of $N_i^{(m)}$ is isomorphic to that of $N_j^{(m)}$ if and only if $im_1\equiv jm_1\pmod{2}$. Since each $N_i^{(m)}$ is simply connected, and its boundary is a homology 3-sphere, this fact and Boyer's theorem (Corollary 0.9 in \cite{B}) show the claim (4). 
\end{proof}

We secondly state smooth properties. These propositions immediately follow from Proposition~\ref{sec:algorithm:prop:PALF} and Theorem~\ref{sec:algorithm:thm:exotic_PALF}. 
\begin{proposition}\label{subsec:nuclei:prop:nuclei:smooth_arbitrary} For any $3$-tuple $m=(m_0,m_1,m_2)$ of non-negative integers and any integer $i$, the following hold. 

$(1)$ $N_i^{(m)}$ can be smoothly embedded into $N\#(m_0+m_1+m_2) \overline{\mathbb{C}{P}^2}$.

$(2)$ The contact $3$-manifold $(\partial N^{(m)}_i, \eta^{(m)}_i)$ is contactomorphic to $(\partial N^{(m)}, \eta^{(m)})$. Furthermore, its support genus is one.
\end{proposition}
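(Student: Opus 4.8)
The plan is to recognize the PALF $N$ as a special instance of the general PALF $X$ from Subsection~\ref{subsec:S} and then read off the desired conclusions from Proposition~\ref{sec:algorithm:prop:PALF}, whose assertions are stated for an arbitrary $n$-tuple of non-negative integers. First I would verify that $N$ satisfies the three conditions of Subsection~\ref{subsec:S}: its fiber is $\mathbb{S}$ (the allowed case $\Sigma=\mathbb{S}$), its monodromy factorization has the required form $(\gamma_{1},\beta,\gamma_{-1},C_1,C_2,C_3)$ with $n=3$ and $(C_1,C_2,C_3)=(\gamma_{-1},\alpha_1,\alpha_2)$ all homologically non-trivial, and the three curves $\alpha_1,\alpha_2,\gamma_{-1}$ indeed lie in $\{C_1,C_2,C_3\}$. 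Under this identification $N^{(m)}_i$ is precisely $X^{(m)}_i$, and the $3$-tuple $(m_0,m_1,m_2)$ plays the role of $(m_1,m_2,m_3)$.

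For part $(1)$, I would simply invoke Proposition~\ref{sec:algorithm:prop:PALF}$(6)$, which embeds $X^{(m)}_i$ smoothly into $X\#_{j=1}^n m_j\overline{\mathbb{C}{P}^2}$. Since here $n=3$ and $\sum_{j} m_j=m_0+m_1+m_2$, this is exactly the claimed embedding into $N\#(m_0+m_1+m_2)\overline{\mathbb{C}{P}^2}$. For the contactomorphism in part $(2)$, the statement that $(\partial N^{(m)}_i,\eta^{(m)}_i)$ is contactomorphic to $(\partial N^{(m)},\eta^{(m)})$ is immediate from Proposition~\ref{sec:algorithm:prop:PALF}$(4)$, which asserts that the induced open books on $\partial X^{(m)}_i$ and $\partial X^{(m)}$ are isomorphic and hence the contact structures contactomorphic.

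For the support-genus assertion I would argue $\textnormal{sg}(\eta^{(m)})=1$ by a two-sided estimate. The upper bound $\textnormal{sg}(\eta^{(m)})\leq 1$ holds because $\eta^{(m)}$ is compatible with the open book induced by the PALF $N^{(m)}$, whose page $\mathbb{S}^{(m)}$ may be taken to have genus one, as arranged in the construction of $N^{(m)}$. The lower bound $\textnormal{sg}(\eta^{(m)})\geq 1$ is supplied directly by Proposition~\ref{sec:algorithm:prop:PALF}$(5)$: the intersection form of $N^{(m)}=X^{(m)}$ is indefinite, so by Etnyre's Theorem~1.2 in \cite{Et3} the contact structure cannot be supported by a planar open book. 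Combining the two bounds gives the equality.

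The only point requiring a little care — and the mild obstacle here — is that the support-genus statement is claimed for an \emph{arbitrary} $3$-tuple $m$, whereas Theorem~\ref{sec:algorithm:thm:exotic_PALF}$(4)$ presupposes the conditions of Step~\ref{step:condition:exotic}. I would therefore deliberately avoid Theorem~\ref{sec:algorithm:thm:exotic_PALF} for this part and instead derive $\textnormal{sg}\geq 1$ from the indefiniteness in Proposition~\ref{sec:algorithm:prop:PALF}$(5)$, which is stated for arbitrary $m$; combined with the genus-one page this yields $\textnormal{sg}(\eta^{(m)})=1$ in full generality, matching the hypothesis of the proposition. Since $\eta^{(m)}_i$ is contactomorphic to $\eta^{(m)}$ by part $(2)$, the same value of the support genus holds for every index $i$.
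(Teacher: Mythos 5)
Your proposal is correct and follows essentially the same route as the paper, which disposes of this proposition by declaring that it "immediately follows" from Proposition~\ref{sec:algorithm:prop:PALF} (together with Theorem~\ref{sec:algorithm:thm:exotic_PALF}, which is only needed for the companion Proposition~\ref{subsec:nuclei:prop:nuclei}); you simply spell out the verification that $N$ satisfies the hypotheses of Subsection~\ref{subsec:S} and correctly source the lower bound $\textnormal{sg}\geq 1$ from Proposition~\ref{sec:algorithm:prop:PALF}(5), which holds for arbitrary $m$. Your explicit care in avoiding Theorem~\ref{sec:algorithm:thm:exotic_PALF}(4) for the support-genus claim is exactly the right reading of why the statement holds for all $3$-tuples $m$, not just those satisfying Step~\ref{step:condition:exotic}.
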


\begin{proposition}\label{subsec:nuclei:prop:nuclei} Fix an arbitrary $3$-tuple $m=(m_0,m_1,m_2)$ of non-negative integers with $m_1\geq 1$. Then the following hold. 

$(1)$ $N_{2i}^{(m)}$'s $(i\in \mathbb{Z})$ are pairwise homeomorphic Stein fillings of the same contact $3$-manifold $(\partial N^{(m)}, \eta^{(m)})$. Moreover, infinitely many of them are pairwise non-diffeomorphic. 

$(2)$ $N_{2i-1}^{(m)}$'s $(i\in \mathbb{Z})$ are pairwise homeomorphic Stein fillings of the same contact $3$-manifold $(\partial N^{(m)}, \eta^{(m)})$. Moreover, infinitely many of them are pairwise non-diffeomorphic.
\end{proposition}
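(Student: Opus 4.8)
The plan is to realize the PALF $N$ as an instance of the general PALF $X$ of Theorem~\ref{sec:algorithm:thm:exotic_PALF} and to deduce the proposition directly from that theorem, the only real work being to confirm that the prescribed tuple $m$ meets the hypotheses of Step~\ref{step:condition:exotic}. First I would set $\Sigma=\mathbb{S}$ and read the monodromy factorization of $N$ as $(\gamma_{1},\beta,\gamma_{-1},C_1,C_2,C_3)$ with $C_1=\gamma_{-1}$, $C_2=\alpha_1$, $C_3=\alpha_2$. Then $N$ satisfies all three conditions of Subsection~\ref{subsec:S}: the fiber is $\mathbb{S}$ (the permitted case $\Sigma=\mathbb{S}$), each $C_j$ is homologically non-trivial, and $\alpha_1,\alpha_2,\gamma_{-1}$ all lie in $\{C_1,C_2,C_3\}$. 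Under the identification $X=N$, the PALF's $N^{(m)}$ and $N^{(m)}_i$ are exactly the $X^{(m)}$ and $X^{(m)}_i$ of Step~\ref{step:modified PALF}, with $(m_0,m_1,m_2)$ recording the numbers of $R$-modifications applied to $\gamma_{-1},\alpha_1,\alpha_2$, and $\eta^{(m)}$ coincides with the contact structure $\xi^{(m)}$ on $\partial X^{(m)}$.

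The main point is to verify that $m=(m_0,m_1,m_2)$ satisfies the inequalities of Step~\ref{step:condition:exotic} precisely when $m_1\geq 1$. For this I would compute the index sets $J$ and $J_{\alpha_1}$. Since $[\alpha_1],[\alpha_2],[\alpha_3],[\beta]$ is the basis of $H_1(\mathbb{S})$ dual to the cocores $\tau_1,\tau_2,\tau_3,\tau_\beta$, the number $Q(C,\tau_i)$ is just the $[\alpha_i]$-coefficient of $[C]$. Using the identity $[\gamma_{-1}]=[\beta]-[\alpha_1]-[\alpha_2]-[\alpha_3]$ from the proof of Proposition~\ref{prop:identification of boundary T}, I obtain $Q(\gamma_{-1},\tau_1)-Q(\gamma_{-1},\tau_3)=(-1)-(-1)=0$, while $Q(\alpha_1,\tau_1)-Q(\alpha_1,\tau_3)=1-0=1$ and $Q(\alpha_2,\tau_1)-Q(\alpha_2,\tau_3)=0$. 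Hence $J=\{2\}=J_{\alpha_1}$. The only non-vacuous requirement of Step~\ref{step:condition:exotic} is the one attached to the index $2\in J_{\alpha_1}$, which by Remark~\ref{sec:main:rem:step3}(2) (together with $r(\alpha_1)=0$ from Remark~\ref{sec:main:rem:step3}(1)) is equivalent to $m_1\geq 1$; the indices $1,3\notin J$ impose nothing. So the standing hypothesis $m_1\geq 1$ is exactly the condition of Step~\ref{step:condition:exotic}.

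With this in hand, assertions (1) and (2) are immediate from Theorem~\ref{sec:algorithm:thm:exotic_PALF}(1) and (2): the $N^{(m)}_{2i}$'s (resp.\ $N^{(m)}_{2i-1}$'s) are pairwise homeomorphic Stein fillings of $(\partial N^{(m)},\eta^{(m)})$, and infinitely many are pairwise non-diffeomorphic. I do not anticipate a genuine obstacle, as all the analytic content---the homeomorphism type via Boyer's theorem, and the non-diffeomorphism via the relative genus function and the adjunction inequality---is already established in Section~\ref{sec:proof of main}. The single step requiring care is the computation of $J$ and $J_{\alpha_1}$ above, and I would present that identification of intersection numbers explicitly; everything else is a direct appeal to the cited results.
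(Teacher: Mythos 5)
Your proposal is correct and follows essentially the same route as the paper: the paper derives this proposition immediately from Proposition~\ref{sec:algorithm:prop:PALF} and Theorem~\ref{sec:algorithm:thm:exotic_PALF}, having noted just before the statement that $m_1\geq 1$ is exactly the condition of Step~\ref{step:condition:exotic} (via Remark~\ref{sec:main:rem:step3}). Your explicit computation of $J=J_{\alpha_1}=\{2\}$ using $[\gamma_{-1}]=[\beta]-[\alpha_1]-[\alpha_2]-[\alpha_3]$ simply fills in the detail that the paper delegates to that remark, and it is accurate.
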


Lastly, we check their boundaries. 
\begin{proposition}\label{subsec:nuclei:prop:boundary}For any $3$-tuple $m=(m_0,m_1,m_2)$ of non-negative integers with $m_1\geq 1$, the following hold. 

$(1)$ $\partial N^{(m)}$ is a hyperbolic $($hence irreducible$)$ $3$-manifold. 

$(2)$ For a non-negative integer $m_0'$, put $m'=(m_0', m_1, m_2)$. Then $\partial N^{(m)}$ is homeomorphic to $\partial N^{(m')}$ preserving the orientations, if and only if $m_0'=m_0$. 
\end{proposition}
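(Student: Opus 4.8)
The plan is to reduce both parts to the explicit Dehn surgery description of $\partial N^{(m)}$ drawn in the right picture of Figure~\ref{fig:boundary_Stein_nuclei}, and then to read off hyperbolicity and the dependence on $m_0$ from it. First I would simplify that diagram by Kirby calculus (repeated slam-dunks and Rolfsen twists) until $\partial N^{(m)}$ is presented as $1/q$-surgery on a single knot $K = K_{m_1,m_2}$ in $S^3$, where the coefficient $q$ is an affine function of $m_0$ and the knot $K$ depends only on the fixed integers $m_1,m_2$. That the final coefficient must have the form $\pm 1/q$ is forced by Proposition~\ref{subsec:nuclei:prop:nuclei:top}, which guarantees $\partial N^{(m)}$ is a homology $3$-sphere; and the linear appearance of $m_0$ in $S_i\cdot S_i$ there makes it plausible that $m_0$ survives only in the surgery coefficient.

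For part (1) I would show that the complement of $K$ is hyperbolic (for instance by producing an ideal triangulation admitting a complete hyperbolic solution of the gluing equations, or by a rigorously verified computation) and that $K$ is neither a torus knot nor a satellite. Thurston's hyperbolic Dehn surgery theorem then yields hyperbolicity of all but finitely many $1/q$-surgeries on $K$, and irreducibility is automatic for a hyperbolic manifold. The main obstacle lies precisely here: the statement demands hyperbolicity for \emph{every} admissible $m$, i.e.\ for the entire infinite arithmetic progression of slopes $1/q$ with $q = q(m_0)$, so ``all but finitely many'' is not enough by itself. I would therefore enumerate the finitely many exceptional slopes of $K$, check that they do not meet our $1/q$-family except possibly for a handful of small values of $q$, and verify those residual small cases directly from the simplified diagram. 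Controlling the exceptional slopes along the whole family is the delicate step.

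For part (2) the ``if'' direction is trivial, since $m_0' = m_0$ gives $m' = m$ and hence the same manifold. For ``only if'' I would use an orientation-sensitive invariant that detects $m_0$. The cleanest route, valid for all $m_0$ at once, is the Casson invariant: by Casson's surgery formula $\lambda\bigl(S^3_{1/q}(K)\bigr) = \tfrac{q}{2}\,\Delta_K''(1)$, so once I verify $\Delta_K''(1) \neq 0$ for $K = K_{m_1,m_2}$, the invariant $\lambda$ becomes an injective affine function of $m_0$; since an orientation-preserving homeomorphism preserves $\lambda$, distinct values of $m_0$ give non-homeomorphic boundaries. As a fallback (e.g.\ if the reduction yields a link rather than a knot) I would instead invoke Mostow rigidity together with the Neumann--Zagier volume asymptotics, which make the hyperbolic volume a strictly monotone function of $|q|$ for large $|q|$ and hence separate all but finitely many $m_0$, the remainder being checked by hand. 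The only genuine computation left is confirming $\Delta_K''(1) \neq 0$ for every fixed $m_1 \geq 1$, $m_2 \geq 0$.
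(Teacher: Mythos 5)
Your reduction to the surgery picture in Figure~\ref{fig:boundary_Stein_nuclei} is the right starting point and matches the paper exactly: $\partial N^{(m)}$ is $\frac{1}{m_0+2}$-surgery on a knot $K_{m_1,m_2}$ depending only on $m_1,m_2$. For part (2) your Casson-invariant argument is a genuinely different and viable route. The paper instead quotes Wu's cosmetic surgery theorem \cite{Wu} for the two positive slopes $\frac{1}{m_0+2}$ and $\frac{1}{m_0'+2}$ on the same knot; your surgery formula $\lambda\bigl(S^3_{1/q}(K)\bigr)=\frac{q}{2}\Delta_K''(1)$ is more elementary, and the remaining verification $\Delta_K''(1)\neq 0$ is a finite computation of the same kind the paper carries out for the sibling family in the proof of Proposition~\ref{sec:ex:prop:non-homeo}, where the Alexander polynomial has extreme coefficients $(l_1+2)(l_2+1)\neq 0$; the analogous coefficient here is likewise nonzero when $m_1\geq 1$. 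So part (2) is acceptable modulo writing out that one computation.

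Part (1), however, has a genuine gap, and you have put your finger on it yourself: Thurston's theorem gives hyperbolicity of all but finitely many fillings for each fixed $(m_1,m_2)$, but you must rule out the exceptional slopes meeting the infinite family $\bigl\{1/(m_0+2)\bigr\}_{m_0\geq 0}$, and you must do this uniformly over the infinite two-parameter family of knots $K_{m_1,m_2}$. ``Enumerating the finitely many exceptional slopes of $K$'' cannot be done by hand, knot by knot, for infinitely many knots, and a verified computation of hyperbolicity of each individual complement is likewise not a proof for the whole family. The missing idea is structural: $K_{m_1,m_2}$ is a two-bridge (double twist) knot, and the surgery coefficient $1/(m_0+2)$ is non-integral. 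By the Brittenham--Wu classification of exceptional surgeries on two-bridge knots \cite{BW}, every exceptional slope on a hyperbolic two-bridge knot is an integer, so no slope in your family is ever exceptional; this single citation disposes of all $(m_0,m_1,m_2)$ at once (with $m_1\geq 1$ ensuring $K_{m_1,m_2}$ is not a $(2,n)$-torus knot, hence is a hyperbolic two-bridge knot to begin with). Without recognizing the two-bridge structure and the non-integrality of the slope, your outline for (1) cannot be completed as stated.
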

\begin{proof}
(1) Figure~\ref{fig:boundary_Stein_nuclei} shows that $\partial N^{(m)}$ is obtained by Dehn surgery along a 2-bridge knot with the non-integer coefficient. The claim thus follows from the main result of \cite{BW}. 

(2) According to the right picture of Figure~\ref{fig:boundary_Stein_nuclei}, the boundaries $\partial N^{(m)}$ and $\partial N^{(m')}$ are obtained by Dehn surgeries along the same knot with two positive surgery coefficients $\frac{1}{m_0+2}$ and $\frac{1}{m_0'+2}$, respectively. Thus the claim immediately follows from Wu's result \cite{Wu} on the cosmetic surgery conjecture. 
\end{proof}

Summarizing Propositions~\ref{subsec:nuclei:prop:nuclei:top}--\ref{subsec:nuclei:prop:boundary}, we immediately see that Stein nuclei satisfy the conditions of Theorem~\ref{sec:intro:support}. 
\newtheorem*{thm:support:again}{Theorem~\ref{sec:intro:support}}

\begin{thm:support:again} There exist infinitely many pairwise non-homeomorphic contact $3$-manifolds of support genus one each of which admits infinitely many pairwise homeomorphic but non-diffeomorphic simply connected Stein fillings with $b_2=2$. Furthermore, each of these $3$-manifolds is a hyperbolic $($hence irreducible$)$ homology $3$-sphere. 
\end{thm:support:again}

\begin{remark}In \cite{AY6}, Akbulut and the author earlier studied certain Stein handlebodies which are diffeomorphic to some of $N_i^{(m)}$'s, without using PALF's. Indeed, in the case $m=(1,1,0)$, we proved that infinitely many of $N^{(m)}_i$'s are pairwise exotic Stein fillings of the same contact 3-manifold by using properties of Stein handlebodies. 
\end{remark}

Here recall that a 2-handlebody means a handlebody obtained from the 0-handle by attaching 1- and 2-handles. Due to Stein nuclei and Corollary~\ref{sec:algorithm:cor:boundary sum}, we can now easily prove Theorem~\ref{intro:thm:most2-handlebody}. 
\newtheorem*{thm:2-handlebody:again}{Theorem~\ref{intro:thm:most2-handlebody}}

\begin{thm:2-handlebody:again} Let $X$ be a compact oriented $4$-dimensional $2$-handlebody, and let $Z$ be either $X\#S^2\times S^2$ or $X\#\mathbb{C}P^2\#\overline{\mathbb{C}{P}^2}$. Then there exist infinitely many pairwise homeomorphic but non-diffeomorphic Stein fillings of the same contact 3-manifold such that the fundamental group, the homology group, the homology group of the boundary, and the intersection form of each filling are isomorphic to those of $Z$.  Furthermore, for some positive integer $k$, all of these fillings can be smoothly embedded into the same manifold $Z\#k\overline{\mathbb{C}{P}^2}$. 
\end{thm:2-handlebody:again}
\begin{proof}By isotopy, we may assume that 2-handles of $X$ are attached along a Legendrian link. Applying $W^+$-modifications in \cite{AY5} to each 2-handle, we can increase the Thurston-Bennequin number of the attaching circle of each 2-handle (Proposition 4.7 in \cite{AY5}). By adding zig-zags to the attaching circle, we may assume that the framing of each 2-handle is Thurston-Bennequin number $-1$. Therefore, the resulting handlebody $X'$ admits a Stein structure (\cite{G1}). Propositions 4.2 and 4.5 in \cite{AY5} show that the topological invariants of the Stein filling $X'$ coincide with those of $X$, and that $X'$ can be embedded into $X$. 

Let us consider Stein nuclei. Let fix a 3-tuple $m=(m_0,m_1,m_2)$ of non-negative integers satisfying 
\begin{equation*}
m_0\geq 1, \quad m_0\equiv 1 \pmod{2},\quad m_1\equiv 1 \pmod{2}.  
\end{equation*}
These conditions on $m$ and Proposition~\ref{subsec:nuclei:prop:nuclei:top} imply that the topological invariants of each $N_{i}^{(m)}$ coincide with those of $\mathbb{C}{P}^2\#\overline{\mathbb{C}{P}^2}-D^4$ (resp.\ $S^2\times S^2-D^4$), if $i$ is an even (resp.\ odd) integer. 

We next consider $N$. See the diagram of $N^{(m)}$ in Figure~\ref{fig:boundary_Stein_nuclei}, and let $N'$ denote the handlebody obtained from the diagram by replacing the framing $-m_0-2$ with $0$. Then we easily see that $N'$ is diffeomorphic to $S^2\times S^2-D^4$. Clearly, $N=N^{(0)}$ can be embedded into $N'\#2\overline{\mathbb{C}{P}^2}$. Consequently, $N$ can be embedded into $S^2\times S^2\#2\overline{\mathbb{C}{P}^2}$, which is diffeomorphic to $\mathbb{C}{P}^2\#3\overline{\mathbb{C}{P}^2}$. 
Therefore Corollary~\ref{sec:algorithm:cor:boundary sum} shows that $X'\natural N_{i}^{(m)}$'s give the desired Stein fillings. 
\end{proof}
\begin{remark}
One might expect that, if a PALF contains a Stein nucleus $N^{(m)}$ $(m_1\geq 1)$ as a sub-PALF, then it produces infinitely many exotic Stein fillings by partial twists along $\mathbb{T}\subset N^{(m)}$. However, this claim does not always hold. This can be seen as follows. Add curves $\alpha_1,\alpha_2,\alpha_3$ to the monodromy factorization of $N^{(m)}$. Then, due to Lemma~\ref{sec:partial:lem:condition:unchange}, any partial twist along $\mathbb{T}\subset N^{(m)}$ does not change the isomorphism type of the resulting PALF. Similarly, the corresponding claim for Stein handlebodies does not always hold (apply Lemma~2.2 in \cite{G_AGT}). 
\end{remark}

\subsection{Non-homeomorphic Stein fillings with small $b_2$} 
In the rest of this section, we construct more examples of infinitely many (not necessarily exotic) Stein fillings in the case where boundary contact 3-manifolds are of support genus one. 

Here we construct simple examples, which yield Theorem~\ref{sec:intro:thm:non-homeo_simple}. 
\newtheorem*{thm:non-homeo_simple:again}{Theorem~\ref{sec:intro:thm:non-homeo_simple}}

\begin{thm:non-homeo_simple:again}There exist infinitely many pairwise non-homeomorphic contact $3$-manifolds of support genus one each of which admits infinitely many pairwise non-homeomorphic Stein fillings with $b_1=0$ and $b_2=1$. Furthermore, each of these $3$-manifolds is irreducible and toroidal. 
\end{thm:non-homeo_simple:again}

Let $L$ be the PALF with fiber $\mathbb{S}$ whose monodromy factorization is 
\begin{equation*}
(\gamma_{1}, \beta, \gamma_{-1}, \gamma_{-1}, \alpha_2).
\end{equation*}
Let fix an arbitrary $2$-tuple $l=(l_1,l_2)$ of non-negative integers. For an integer $i$, let $L^{(l)}$ and $L_i^{(l)}$ be the PALF's obtained by applying Step~\ref{step:modified PALF} of the algorithm to $L$. Let $\iota^{(l)}$ and $\iota^{(l)}_i$ denote the contact structures on $\partial L^{(l)}$ and $\partial L^{(l)}_i$ induced from the Stein structures on $L^{(l)}$ and $L^{(l)}_i$, respectively. 
We note $L^{(0)}=L$. We may assume that the fiber surface $\mathbb{S}^{(l)}$ of the PALF's $L^{(l)}$ and $L_i^{(l)}$ is of genus one. 

We can easily see the following. The claim (5) was kindly pointed out by Yuichi Yamada. 

\begin{proposition}\label{sec:ex:prop:non-homeo}Fix an arbitrary $2$-tuple $l=(l_1,l_2)$ of non-negative integers. Then the following hold. 

$(1)$ Each $(\partial L_i^{(l)}, \iota^{(l)}_i)$ $(i\in \mathbb{Z})$ is contactomorphic to $(\partial L^{(l)}, \iota^{(l)})$. Furthermore, its support genus is one. In addition, each 3-manifold $\partial L_i^{(l)}$ $(i\in \mathbb{Z})$ is toroidal and irreducible. 

$(2)$ $\pi_1(L_i^{(l)})\cong \mathbb{Z}/i\mathbb{Z}$ for each integer $i$. 

$(3)$ $H_2(L_i^{(l)};\mathbb{Z})\cong \mathbb{Z}$, $($resp.\ $\mathbb{Z}\oplus \mathbb{Z}$$)$ if $i\in \mathbb{Z}-\{0\}$ $($resp.\ $i=0$$)$. 

$(4)$ The intersection form of $L_i^{(l)}$ is degenerate $($resp.\ indefinite$)$, if $i\in \mathbb{Z}-\{0\}$ $($resp.\ $i=0$$)$.   

$(5)$ For a $2$-tuple $l'=(l_1', l_2')$ of non-negative integers, if 
 the boundary $\partial L^{(l')}$ is homeomorphic to $\partial L^{(l)}$, then $(l_1'+2)(l_2'+1)=(l_1+2)(l_2+1)$. 
\end{proposition}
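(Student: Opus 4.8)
The plan is to reduce the statement to a Dehn surgery computation, in the same spirit as the proof of Proposition~\ref{subsec:nuclei:prop:boundary}.(2). By part~$(1)$ the boundary $3$-manifold is independent of $i$, so I may work with $L^{(l)}=L^{(l)}_0$. Using Lemma~\ref{sec:R:lem:diffeo:modification}, the $R$-modifications only lower the framings of the $2$-handles, so up to diffeomorphism $L^{(l)}$ is $\mathbb{T}\cong T^{2}\times D^{2}$ with two $2$-handles attached along $\gamma_{-1}$ and $\alpha_2$. By Proposition~\ref{prop:identification of boundary T} these lie along the standard $\partial D^{2}$- and $S^{1}$-directions, and the surface framings translate into surgery coefficients $-l_1-2$ and $-l_2-1$. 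Cancelling the two $1$-handles of $T^{2}\times D^{2}$ against the $\alpha_2$-handle and slam-dunking the $\gamma_{-1}$-handle (the moves already used for Figure~\ref{fig:boundary_Stein_nuclei}), I expect to reach a surgery diagram in which the two parameters enter only through the fractional coefficients $\tfrac{1}{l_2+1}$ and $\tfrac{1}{l_1+2}$ on two components of the Borromean rings, the third component carrying coefficient $0$. Note that only one of the two $1$-handles is cancelled, which is exactly why $\partial L^{(l)}$ has $b_1=1$ (consistent with $H_1(\partial L^{(l)};\mathbb{Z})\cong\mathbb{Z}$ for every $l$), so that no homological quantity can detect the parameters.

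Next I would fuse the two twisting surgeries. Performing the two Rolfsen twists of orders $l_2+1$ and $l_1+2$ along the disks of the two $\tfrac1q$-framed unknots converts the remaining $0$-framed component into a knot $K_l\subset S^{3}$ with framing $0$, so that $\partial L^{(l)}$ is the $0$-surgery $S^{3}_{0}(K_l)$. Since the surviving component passes twice through each twisting disk, I expect $K_l$ to be a genus-one double-twist knot with twist parameters $l_1+2$ and $l_2+1$; its Alexander polynomial then has the form $\alpha t-\beta+\alpha t^{-1}$ with $\alpha=\pm(l_1+2)(l_2+1)$, and $\alpha$ depends on $l_1,l_2$ \emph{only} through this product precisely because the two twist regions contribute symmetrically. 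The point is that $\Delta_{K_l}(t)$ is recoverable up to units from the $3$-manifold $S^{3}_{0}(K_l)=\partial L^{(l)}$ (equivalently one may read off $(l_1+2)(l_2+1)$ from the order of $H_1$ of a cyclic cover, or invoke Wu's cosmetic surgery result~\cite{Wu} on a fixed knot as in the nuclei case). Granting this, a homeomorphism $\partial L^{(l')}\cong\partial L^{(l)}$ forces the corresponding invariants to agree, which is exactly the asserted equality $(l_1'+2)(l_2'+1)=(l_1+2)(l_2+1)$; this also recovers the toroidality in part~$(1)$, since the capped genus-one Seifert surface of $K_l$ supplies the essential torus.

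The hard part will be the middle step. First, one must carry out the Kirby-calculus reduction carefully enough to see that the outcome is genuinely a $0$-surgery on a single knot (rather than an unreduced link), keeping track of how cancelling the $\alpha_2$-handle clasps the surviving $0$-framed component around the $\gamma_{-1}$-surgery region. Second, and more delicately, one must verify that the resulting invariant collapses the ordered data $(l_1,l_2)$ to the single product $(l_1+2)(l_2+1)$; this symmetry is what makes the conclusion of claim~$(5)$ the product rather than the unordered pair $\{l_1+2,\,l_2+1\}$, and it is where the explicit identification of $K_l$ (or the symmetry of its Alexander polynomial in the two twist parameters) is essential. Once the surgery knot and its coefficient are pinned down, the conclusion follows from the homeomorphism invariance of the chosen classical invariant, exactly as Wu's theorem was invoked for the Stein nuclei.
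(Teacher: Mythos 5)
Your treatment of claim (5) is essentially the paper's own proof: reduce $\partial L^{(l)}$ by Kirby moves to the $0$-surgery on a genus-one double twist (2-bridge) knot — the paper's Figure~\ref{fig:boundary_non-homeo} is exactly the diagram you predict after your Rolfsen twists — compute its Alexander polynomial $(l_1+2)(l_2+1)t-\bigl(2(l_1+2)(l_2+1)-1\bigr)+(l_1+2)(l_2+1)t^{-1}$, and then use the fact that the Alexander polynomial of a knot is an invariant of its $0$-surgery; the paper justifies this last fact via Turaev's maximal abelian torsion (Examples and Remarks 17.4 in \cite{Tra}), which is precisely your ``recoverable up to units'' assertion. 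One of your parenthetical alternatives, however, would fail: Wu's cosmetic surgery theorem \cite{Wu} compares different coefficients on a \emph{fixed} knot, which is how it is used for Proposition~\ref{subsec:nuclei:prop:boundary}, where only $m_0$ varies and the surgery knot is independent of it. In claim (5) both parameters move, so $\partial L^{(l)}$ and $\partial L^{(l')}$ are $0$-surgeries on genuinely different knots $K_l$, $K_{l'}$, and no cosmetic-surgery statement applies; the torsion/Alexander-polynomial route (or your cyclic-cover variant, e.g.\ reading off $\lvert\Delta_{K_l}(-1)\rvert=4(l_1+2)(l_2+1)-1$ from the torsion of $H_1$ of the double cover) is the one that works.

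The other gap is coverage: the proposal proves only claim (5) and the toroidality portion of (1), taking the rest as given. In the paper, claims (2)--(4) are read off directly from the handlebody diagrams of $L^{(l)}$ and $L_i^{(l)}$ (Figure~\ref{fig:non_homeo}), drawn exactly as for the Stein nuclei; the contactomorphism in (1) comes from Lemma~\ref{lem: monodromy of T} (the three altered factorizations define isomorphic boundary open books), the support-genus-one statement from claim (4) together with Etnyre's theorem in \cite{Et3} (a degenerate or indefinite intersection form of a filling excludes a planar supporting open book, and the genus-one PALF fiber gives the upper bound), and irreducibility/toroidality from Lemmas 2.1 and 2.2 of \cite{BW} applied to the 2-bridge surgery description. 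Note also that your alternative toroidality argument — capping off the genus-one Seifert surface of $K_l$ — is fine in outline but needs an extra input (Gabai's theorem, say) to know the resulting torus is incompressible, whereas the citation of \cite{BW} gives it immediately.
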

\begin{proof}We obtain the handlebody diagrams of $L^{(l)}$ and $L_i^{(l)}$ in Figure~\ref{fig:non_homeo}, similarly to the Stein nuclei. This diagram implies the claims (2)--(4). 

Here we draw a Dehn surgery diagram of $\partial L^{(l)}$. Let us consider the diagram of $L^{(l)}$ in the left picture of Figure~\ref{fig:non_homeo}. Exchanging the middle $0$ and the lower dot, and cancelling the 1-handles, we obtain the diagram in Figure~\ref{fig:boundary_non-homeo}. These operations do not change the boundary, and hence this picture is a Dehn surgery diagram of $\partial L^{(l)}$. Since the knot in the picture is a 2-bridge knot, Lemmas 2.1 and 2.2 in \cite{BW} show that the 3-manifold $\partial L^{(l)}$ is irreducible and toroidal. The claim (1) thus follows from Lemma~\ref{lem: monodromy of T}, the claim (4) and Etnyre's theorem in \cite{Et3}. 

Due to the maximal abelian torsion, if two knots produce the same 3-manifold by 0-surgeries, then they have the same Alexander polynomial (see Examples and Remarks 17.4 in \cite{Tra}). Since the Alexander polynomial of the knot in the right picture of Figure~\ref{fig:boundary_non-homeo} is 
\begin{equation*}
(l_1+2)(l_2+1)t - (2(l_1+2)(l_2+1)-1) + (l_1+2)(l_2+1)t^{-1},
\end{equation*}
the claim (3) follows. 
\end{proof}
\begin{figure}[h!]
\begin{center}
\includegraphics[width=3.7in]{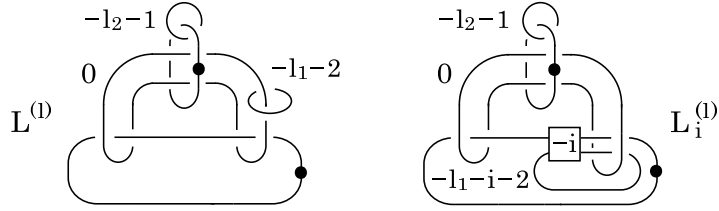}
\caption{$L^{(l)}$ and $L_i^{(l)}$}
\label{fig:non_homeo}
\end{center}
\end{figure}
\begin{figure}[h!]
\begin{center}
\includegraphics[width=1.2in]{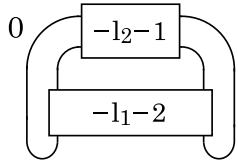}
\caption{$\partial L^{(l)}$}
\label{fig:boundary_non-homeo}
\end{center}
\end{figure}
\begin{proof}[Proof of Theorem~\ref{sec:intro:thm:non-homeo_simple}]
Due to Proposition~\ref{sec:ex:prop:non-homeo}, we immediately see that $L_i^{(l)}$'s satisfy the conditions of Theorem~\ref{sec:intro:thm:non-homeo_simple}. 
\end{proof}

\subsection{Exotic and non-homeomorphic Stein fillings}Lastly we construct examples in Theorem~\ref{sec:intro:thm:non-homeo} by combining the Stein nuclei and $L_i^{(l)}$'s. 

\newtheorem*{thm:non-homeo_exotic:again}{Theorem~\ref{sec:intro:thm:non-homeo}}
\begin{thm:non-homeo_exotic:again}There exist infinitely many pairwise non-homeomorphic contact $3$-manifolds of support genus one each of which admits infinitely many Stein fillings $Z_{i,j}$'s $(i,j\in \mathbb{N})$ satisfying the following: for each fixed  $j\in \mathbb{N}$, infinitely many Stein fillings $Z_{i,j}$'s $(i\in \mathbb{N})$ are pairwise homeomorphic but non-diffeomorphic, and for each fixed  $i\in \mathbb{N}$, infinitely many Stein fillings $Z_{i,j}$'s $(j\in \mathbb{N})$ are pairwise non-homeomorphic. 
\end{thm:non-homeo_exotic:again}

Let $\mathbb{E}$ be the compact surface of genus one with six boundary components in Figure~\ref{fig:homeo_exotic}, where each 1-handle is attached to the obvious disk either vertically or horizontally along the red regions. Let $\alpha_1,\alpha_2,\alpha_3,\beta,a_1, a_2, a_3$ be the oriented simple closed curves in $\mathbb{E}$ as shown in the picture. We orient $\mathbb{E}$ so that $Q(\alpha_1, \beta)=+1$. Let $h^1_{\alpha_1}, h^1_{\alpha_2}, h^1_{\alpha_3}, h^1_{\beta},h^1_{a_1}, h^1_{a_2}, h^1_{a_3}$ denote the obvious 1-handles of $\mathbb{E}$ whose cocores intersect $\alpha_1, \alpha_2,\alpha_3,\beta, a_1,a_2,a_3$, respectively. Let $\mathbb{S}_\alpha$ (resp.\ $\mathbb{S}_a$) be the subsurface of $\mathbb{E}$ which consists of the obvious disk and the 1-handles $h^1_{\alpha_1}, h^1_{\alpha_2}, h^1_{\alpha_3}, h^1_{\beta}$ (resp.\ $h^1_{a_1}, h^1_{a_2}, h^1_{a_3},h^1_{\beta}$). The subsurfaces $\mathbb{S}_\alpha$ and $\mathbb{S}_a$ are clearly diffeomorphic to $\mathbb{S}$ (see also the diagram of $\widehat{\mathbb{S}}$). 

For integers $i,j$, we define the simple closed curves $\gamma_{i}, \gamma_{i}^{(j)}, \beta^{(j)}, \rho_{i}, \rho_{i}^{(j)}, b^{(j)}$ in $\mathbb{E}$ by
\begin{align*}
\gamma_{i}&=(t_{\alpha_3}\circ t_{\alpha_2}\circ t_{\alpha_1})^i(\beta), \quad \gamma_{i}^{(j)}=t_{\alpha_1}^j(\gamma_{i}), \quad \beta^{(j)}=t_{\alpha_1}^j(\beta),\\
\rho_{i}&=(t_{a_3}\circ t_{a_2}\circ t_{a_1})^i(\beta), \quad \rho_{i}^{(j)}=t_{a_1}^j(\gamma_{i}), \quad b^{(j)}=t_{a_1}^j(\beta). 
\end{align*}

\begin{figure}[h!]
\begin{center}
\includegraphics[width=3.1in]{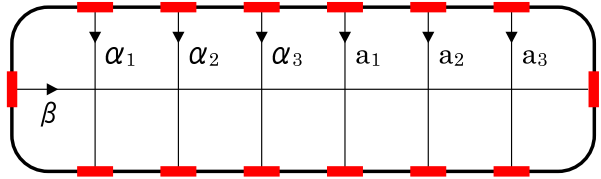}
\caption{The surface $\mathbb{E}$ of genus one with six boundary components}
\label{fig:homeo_exotic}
\end{center}
\end{figure}

For an integer $j$, let $P$ and $P_j$ be the PALF's with fiber $\mathbb{E}$ whose monodromy factorizations are
 \begin{gather*}
(\gamma_{1}, \beta, \gamma_{-1}, \gamma_{-1}, \alpha_1,\alpha_2, \rho_1, \beta, \rho_{-1}, \rho_{-1}, a_2), \\(\gamma_{1}, \beta, \gamma_{-1}, \gamma_{-1}, \alpha_1,\alpha_2, \rho_1^{(j)}, \beta^{(j)}, \rho_{-1}^{(j)}, \rho_{-1}, a_2), 
\end{gather*}
respectively. Note that $P_j$ is obtained from $P$ by a partial twist along the subsurface $\mathbb{S}_a$ and that $P_0=P$.  Each $P_j$ clearly contains the PALF $L^{(0)}_j$ as a submanifold.

We apply the algorithm to each $P_j$. Let $m=(m_0,m_1,m_2)$ be a 3-tuple of non-negative integers satisfying $m_1\geq 1$, and let $l=(l_1, l_2)$ be a 2-tuple of non-negative integers. Put $p=(m_0,m_1,m_2,0,0,0, l_1,l_2)$. For an integer $i$, let $P^{(p)}_{j}$ and $P^{(p)}_{i,j}$ be the PALF's obtained by applying Step~\ref{step:modified PALF} to $P_j$, where we apply $R$-modifications so that the modified curves are independent of the index $j$. Put $P^{(p)}=P_{0}^{(p)}$ and let $\theta^{(p)}$ be the contact structure on $\partial P^{(p)}$ induced from the Stein structure on $P^{(p)}$. 

We may assume that the fiber $\mathbb{E}^{(p)}$ of the PALF's $P^{(p)}_{j}$ and $P^{(p)}_{i,j}$ is of genus one. The monodromy factorization of each $P^{(p)}_{i,j}$ is obtained from that of $P^{(p)}_{0,j}=P^{(p)}_j$ by replacing the subsequence $(\gamma_{1}, \beta, \gamma_{-1})$ with $(\gamma_{1}^{(i)}, \beta^{(i)}, \gamma_{-1}^{(i)})$, and the factorization of $P^{(p)}_{i,j}$ is obtained from that of $P^{(p)}_{i,0}$ by replacing the subsequence $(\rho_{1}, \beta, \rho_{-1})$ with $(\rho_{1}^{(j)}, b^{(j)}, \rho_{-1}^{(j)})$. Consequently, each $P^{(p)}_{i,j}$ is obtained by applying partial twists to $P^{(p)}$ along two subsurfaces $\mathbb{S}_\alpha$ and $\mathbb{S}_a$.

$P^{(p)}_{i,j}$'s satisfy the following properties. 

\begin{proposition}\label{sec:ex:prop:non-home_exotic}Fix non-negative integers $m_0,m_1,m_2, l_1,l_2$ with $m_1\geq 1$. Put $p=(m_0,m_1,m_2,0,0,0,l_1,l_2)$. Then the following hold. 

$(1)$ Fix an integer $j$. Then $P_{2i,j}^{(p)}$'s $(i\in \mathbb{Z})$ are pairwise homeomorphic Stein fillings of the same contact 3-manifold $(\partial P^{(p)}, \theta^{(p)})$. Moreover, infinitely many of them are pairwise non-diffeomorphic. 

$(2)$ Fix an integer $j$. Then $P_{2i-1,j}^{(p)}$'s $(i\in \mathbb{Z})$ are pairwise homeomorphic Stein fillings of the same contact 3-manifold $(\partial P^{(p)}, \theta^{(p)})$. Moreover, infinitely many of them are pairwise non-diffeomorphic. 

$(3)$ The support genus of the contact $3$-manifold $(\partial P^{(p)}, \theta^{(p)})$ is one. 

$(4)$ $\pi_1(P_{i,j}^{(p)})\cong \mathbb{Z}/j\mathbb{Z}$, for each integers $i,j$. Consequently, for any fixed integer $i$,  the Stein fillings $P_{i,j}^{(p)}$'s $(j\in \mathbb{Z})$ are pairwise non-homeomorphic. 

\end{proposition}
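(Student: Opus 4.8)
The plan is to separate the two indices: for fixed $j$ the family $\{P^{(p)}_{i,j}\}_{i\in\mathbb{Z}}$ is exactly the output of the algorithm of Section~\ref{section:main algorithm} applied to the PALF $P_j$ with the tuple $p$, whereas the index $j$ records a partial twist along the second copy $\mathbb{S}_a\cong\mathbb{S}$ of $\mathbb{S}$ sitting inside $\mathbb{E}$.

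First I would establish (1), (2) and (3) by invoking Theorem~\ref{sec:algorithm:thm:exotic_PALF} with $X=P_j$. Each $P_j$ satisfies the hypotheses of Subsection~\ref{subsec:S}: its monodromy begins with $(\gamma_1,\beta,\gamma_{-1})$, the curves $\alpha_1,\alpha_2,\gamma_{-1}$ occur among the remaining vanishing cycles, and $\mathbb{E}$ is obtained from $\mathbb{S}_\alpha\cong\mathbb{S}$ by attaching the $1$-handles $h^1_{a_1},h^1_{a_2},h^1_{a_3}$. Exactly as in the Stein-nucleus case, the only vanishing cycle lying in the index set $J$ of Step~\ref{step:condition:exotic} is $\alpha_1$ (the curves of the $\mathbb{S}_a$-block do not traverse $h^1_{\alpha_1}$ or $h^1_{\alpha_3}$, and $\gamma_{-1},\alpha_2$ are easily checked to lie outside $J$), so the hypothesis $m_1\ge 1$ guarantees that $p$ meets the conditions of Step~\ref{step:condition:exotic} by Remark~\ref{sec:main:rem:step3}. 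Theorem~\ref{sec:algorithm:thm:exotic_PALF} then delivers (1), (2) and the support-genus statement (3), once we note that the $j$-twist leaves the boundary untouched: replacing $(\rho_1,\beta,\rho_{-1})$ by $(\rho_1^{(j)},b^{(j)},\rho_{-1}^{(j)})$ is, by Lemma~\ref{lem: monodromy of T} applied to $\mathbb{S}_a$, a relation that preserves the total monodromy and hence the induced open book, so every $P^{(p)}_{i,j}$ fills the same $(\partial P^{(p)},\theta^{(p)})$.

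For (4) I would compute $\pi_1$. By Proposition~\ref{sec:algorithm:prop:PALF}(1) applied to $X=P_j$ we have $\pi_1(P^{(p)}_{i,j})\cong\pi_1(P_j)$ for all $i$, so it suffices to evaluate $\pi_1(P_j)=\pi_1(\mathbb{E})/\langle\langle B_1,B_2\rangle\rangle$, where $B_1$ and $B_2$ denote the vanishing cycles lying in $\mathbb{S}_\alpha$ and $\mathbb{S}_a$. Since $\mathbb{S}_\alpha\cap\mathbb{S}_a$ deformation retracts onto the common $\beta$-handle, van Kampen gives $\pi_1(\mathbb{E})\cong\pi_1(\mathbb{S}_\alpha)\ast_{\langle[\beta]\rangle}\pi_1(\mathbb{S}_a)$. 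The block $B_1$ is the monodromy of the Stein nucleus $N$, which is simply connected by Proposition~\ref{subsec:nuclei:prop:nuclei:top}(1); hence $B_1$ normally generates all of $\pi_1(\mathbb{S}_\alpha)$, and killing it collapses that amalgamated factor together with the amalgamating class $[\beta]$. This identifies $\pi_1(P_j)$ with $\pi_1(\mathbb{S}_a)/\langle\langle[\beta],B_2\rangle\rangle\cong\pi_1(L_j)/\langle\langle[\beta]\rangle\rangle$, in the notation of Proposition~\ref{sec:ex:prop:non-homeo}.

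It remains to see that the extra relation $[\beta]=1$ is vacuous. By Proposition~\ref{sec:ex:prop:non-homeo}(2) we have $\pi_1(L_j)\cong\mathbb{Z}/j\mathbb{Z}$, which is abelian, so $[\beta]=1$ there if and only if $[\beta]=0$ in $H_1(L_j)$; and since $[b^{(j)}]=[\beta]\pm j[a_1]=0$ while $j[a_1]=0$ in a group of exponent dividing $j$, we get $[\beta]=\mp j[a_1]=0$. Thus $\pi_1(P^{(p)}_{i,j})\cong\mathbb{Z}/j\mathbb{Z}$, and as these groups are pairwise non-isomorphic the fillings $P^{(p)}_{i,j}$ $(j\in\mathbb{N})$ are pairwise non-homeomorphic for each fixed $i$. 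I expect the main obstacle to be exactly this interaction of the two blocks through the shared $\beta$-handle: a priori, killing the nucleus block could impose a relation collapsing $\mathbb{Z}/j\mathbb{Z}$ further, and the content of the argument is that the relation so imposed is precisely $[\beta]=1$, which is already satisfied because $\pi_1(L_j)$ is abelian and $[\beta]$ is null-homologous in $L_j$.
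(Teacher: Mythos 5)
Your proposal is correct and takes essentially the same route as the paper: parts (1)--(3) are reduced to Theorem~\ref{sec:algorithm:thm:exotic_PALF} (after checking, as you do, that only $\alpha_1$ lies in the set $J$ so that $m_1\geq 1$ suffices, and that the $j$-twist preserves the total monodromy and hence the boundary open book), and part (4) reduces to showing $\pi_1(P_j)\cong\pi_1(L_j^{(0)})\cong\mathbb{Z}/j\mathbb{Z}$. The paper performs this last computation by handle cancellation and simply asserts that the remaining $2$-handles of the $\mathbb{S}_\alpha$-block do not change $\pi_1(L_j^{(0)})$; your van Kampen argument proves the same identification, and your explicit verification that the one relation the $\alpha$-block imposes on $\pi_1(L_j^{(0)})$, namely $[\beta]=1$, is already satisfied there (since $[b^{(j)}]=[\beta]\pm j[a_1]=0$ and $j[a_1]=0$ in $\mathbb{Z}/j\mathbb{Z}$) is precisely the point the paper's ``clearly'' glosses over.
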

\begin{proof}We first calculate $\pi_1(P_j)$.  The handle decomposition of $P_j$ is obtained from that of $L^{(0)}_j$ by attaching 1-handles corresponding to $h^1_{\alpha_1}, h^1_{\alpha_2}, h^1_{\alpha_3}$ and 2-handles corresponding to $\gamma_{1}, \beta, \gamma_{-1}, \gamma_{-1}, \alpha_1,\alpha_2$. Clearly, each of these 1-handles is canceled with one of these 2-handles, and the rest of 2-handles do not affect the fundamental group of $L^{(0)}_j$. Since $\pi_1(L_j^{(0)})\cong\mathbb{Z}/j\mathbb{Z}$, we get $\pi_1(P_j)\cong \mathbb{Z}/j\mathbb{Z}$. 
Since $p$ satisfies the conditions of Step~\ref{step:condition:exotic}, and the fiber $\mathbb{E}^{(p)}$ is of genus one, the claims (1)--(4) immediately follow from Theorem~\ref{sec:algorithm:thm:exotic_PALF}. 
\end{proof}
\begin{proof}[Proof of Theorem~\ref{sec:intro:thm:non-homeo}]According to Proposition~\ref{sec:ex:prop:non-home_exotic}, for each fixed $p$ with the above conditions, $P_{i,j}^{(p)}$'s provide the desired Stein fillings for a contact 3-manifold. Though it is likely that we can obtain infinitely many such contact 3-manifolds by varying $p$, we take an alternative approach. Since there are many PALF's whose fiber is a surface of genus zero,  we obtain infinitely many such contact 3-manifolds by taking boundary connected sums of $P_{i,j}^{(p)}$ and such PALF's and applying Corollary~\ref{sec:algorithm:cor:boundary sum}.\end{proof}

Finally we raise the following problem, which naturally arises from Corollary~\ref{sec:algorithm:cor:boundary sum}. 
\begin{problem} Suppose that compact Stein 4-manifolds $X_1$ and $X_2$ are homeomorphic but non-diffeomorphic to each other, where their boundary contact structures are possibly non-contactomorphic. Does there exist a compact Stein 4-manifold $Y$ such that the boundary sums $X_1\natural Y$ and $X_2\natural Y$ are diffeomorphic to each other? 
\end{problem}


\end{document}